\title{Reeb orbits frequently intersecting a symplectic surface}
\author{Michael Hutchings\footnote{Partially supported by NSF grant DMS-2404790.}}
\date{}
\numberwithin{equation}{section}
\newtheorem{theorem}{Theorem}[section]
\newtheorem{proposition}[theorem]{Proposition}
\newtheorem{corollary}[theorem]{Corollary}
\newtheorem{lemma}[theorem]{Lemma}
\newtheorem{lemma-definition}[theorem]{Lemma-Definition}
\theoremstyle{definition}
\newtheorem{definition}[theorem]{Definition}
\newtheorem{remark}[theorem]{Remark}
\newtheorem{example}[theorem]{Example}
\newtheorem{question}[theorem]{Question}
\newcommand{\C}{{\mathbb C}}
\newcommand{\Q}{{\mathbb Q}}
\newcommand{\R}{{\mathbb R}}
\newcommand{\Z}{{\mathbb Z}}
\newcommand{\op}{\operatorname}
\newcommand{\Ker}{\op{Ker}}
\newcommand{\bpm}{\begin{pmatrix}}
\newcommand{\epm}{\end{pmatrix}}
\renewcommand{\epsilon}{\varepsilon}
\begin{document}

\maketitle

\begin{abstract}
Consider a symplectic surface in a three-dimensional contact manifold with boundary on Reeb orbits (periodic orbits of the Reeb vector field). We assume that the rotation numbers of the boundary Reeb orbits satisfy a certain inequality, and we also make a technical assumption that the Reeb vector field has a particular ``nice'' form near the boundary of the surface. We then show that there exist Reeb orbits which intersect the interior of the surface, with a lower bound on the frequency of these intersections in terms of the symplectic area of the surface and the contact volume of the three-manifold.  No genericity of the contact form is assumed.  As a corollary of the main result, we obtain a generalization of various recent results relating the mean action of periodic orbits to the Calabi invariant for area-preserving surface diffeomorphisms.
\end{abstract}

\tableofcontents

\setcounter{tocdepth}{2}

\section{Introduction}

In recent years there has been substantial progress establishing properties of periodic orbits of Reeb vector fields (which we call ``Reeb orbits'') for $C^\infty$-generic contact forms on closed three-manifolds. In particular, Irie \cite{irieclosing} proved that for a $C^\infty$-generic contact form on a closed three-manifold $Y$, the set of Reeb orbits is dense in $Y$. This result was later refined to prove $C^\infty$-generic equidistribution of Reeb orbits \cite{irieequi} and a quantitative closing lemma \cite{altspec}. There is a similar story for periodic orbits of area-preserving surface diffeomorphisms \cite{ai,danclosing,pfh4,prasad}. There is also a parallel story, in a rather different geometric situation, for minimal hypersurfaces in Riemannian manifolds \cite{imn,mns}. In the above results, a key role is played by ``Weyl laws'' for ``spectral invariants'' in contact geometry and Riemannian geometry respectively \cite{vc,lmn}.

The above generic properties of Reeb orbits do not hold for all contact forms. For example, the standard contact form on the boundary of a four-dimensional irrational ellipsoid has only two simple Reeb orbits. In the present paper, we modify some arguments in the proofs of the above results to establish the existence of Reeb orbits with certain properties, namely Reeb orbits frequently intersecting a given symplectic surface, without any genericity hypothesis.

\subsection{Statement of the main result}

We will need the following definitions. Let $Y$ be a closed connected three-manifold, and let $\lambda$ be a contact form on $Y$. Let $\xi=\Ker(\lambda)\subset TY$ denote the associated contact structure. Let $R=R_\lambda$ denote the associated Reeb vector field characterized by $d\lambda(R,\cdot)=0$ and $\lambda(R)=1$.

A {\bf Reeb orbit\/} is a periodic orbit of $R$, namely a map $\gamma:\R/T\Z\to Y$ for some $T>0$, such that $\gamma'(t)=R(\gamma(t))$. We consider two such Reeb orbits to be equivalent if they differ by precomposition with a translation of the domain. The {\bf symplectic action\/} $\mathcal{A}(\gamma)=\mathcal{A}_\lambda(\gamma)$ is the period $T$, or equivalently the integral $\int_{\R/T\Z}\gamma^*\lambda$. The Reeb orbit $\gamma$ is {\bf simple\/} if the map $\gamma$ is an embedding. We sometimes identify a simple Reeb orbit with its image in $Y$. Let $\mathcal{P}(\lambda)$ denote the set of all simple Reeb orbits for $\lambda$.

If $\gamma:\R/T\Z\to Y$ is a Reeb orbit, the derivative of the time $T$ Reeb flow defines a symplectic linear map
\[
P_\gamma: (\xi_{\gamma(0)},d\lambda)\circlearrowleft.
\]
The Reeb orbit $\gamma$ is {\bf nondegenerate\/} if $1\notin\op{Spec}(P_\gamma)$. The contact form $\lambda$ is said to be nondegenerate if all Reeb orbits (including non-simple ones) are nondegenerate. The Reeb orbit $\gamma$ is {\bf elliptic\/} if $P_\gamma$ is conjugate to a rotation. If $\tau$ is a homotopy class of symplectic trivialization of $\gamma^*\xi$, then there is a well defined {\bf rotation number\/}
\[
\op{rot}_\tau(\gamma)\in\R.
\]
See e.g. \cite[\S3.2]{bn}. In the case where $\gamma$ is elliptic, one can homotope the trivialization $\tau$ so that for each $t\in\R$, the derivative of the time $t$ Reeb flow along $\gamma$ on $\xi$ is rotation by angle $2\pi t \cdot\op{rot}_\tau(\gamma)/T$.

We now want to consider Reeb orbits that intersect symplectic surfaces.

\begin{definition}
\label{def:ass}
An {\bf admissible symplectic surface\/} in $(Y,\lambda)$ is a map
\[
u:\Sigma\to Y
\]
such that:
\begin{description}
\item{(i)} $\Sigma$ is a compact oriented surface with boundary.
\item{(ii)} $u$ is an immersion, and $u|_{\op{int}(\Sigma)}$ is an embedding whose image is disjoint from $u(\partial\Sigma)$.
\item{(iii)} The Reeb vector field $R$ is positively transverse to $\op{int}(\Sigma)$.
\item{(iv)} If $B$ is a component of $\partial\Sigma$, then $u(B)$ is a simple Reeb orbit $\gamma$.
\end{description}
\end{definition}

\begin{remark}
\label{rem:samedegree}
In condition (iv), the degree of the map $u|_B:B\to\gamma$ can be any nonzero integer. In addition, another component $B'$ of $\partial\Sigma$ may map to the same simple Reeb orbit $\gamma$. In this case, it follows from conditions (ii) and (iii) in Definition~\ref{def:ass} that the maps $u|_B:B\to \gamma$ and $u|_{B'}:B'\to\gamma$ have the same degree.
\end{remark}

\begin{remark}
In the terminology of \cite[\S2]{cdhr}, the map $u:\Sigma\to Y$ is a ``section for the flow'' of the Reeb vector field $R$. If one further assumes that every Reeb trajectory intersects $u(\Sigma)$ in forward and backward time (which we are not requiring in Definition~\ref{def:ass}), then $u$ is a ``Birkhoff section'' for the flow. It is shown in \cite{cdhr} that a $C^\infty$-generic contact form admits a Birkhoff section, and this gives many examples of admissible symplectic surfaces. 
\end{remark}

\begin{remark}
Under certain hypotheses, the projection of a finite energy holomorphic curve in $\R\times Y$ to $Y$ is an embedding, and then this gives rise to an admissible symplectic surface. See e.g. \cite[Prop.\ 3.3]{cghp} for one such set of hypotheses, based on ideas from \cite{hwz2,wendl}. More generally, when the projection of a holomorphic curve in $\R\times Y$ to $Y$ is not an embedding, one can modify it to obtain an admissible symplectic surface as explained in \cite[\S3.3]{cdr}, using ideas from \cite{fried}.
\end{remark}

\begin{definition}
If $u:\Sigma\to Y$ is an admissible symplectic surface, then it follows from condition (iii) in Definition~\ref{def:ass} that $u^*d\lambda>0$ on $\op{int}(\Sigma)$. We define the {\bf area\/}
\[
\op{Area}(\Sigma,d\lambda) = \int_\Sigma u^*d\lambda > 0.
\]
\end{definition}

\begin{definition}
\label{def:rotsigma}
Let $u:\Sigma\to Y$ be an admissible symplectic surface, and let $\gamma$ be a simple Reeb orbit in $u(\partial\Sigma)$. By Remark~\ref{rem:samedegree}, there is a nonzero integer $q$ such that each component of $\partial\Sigma$ that maps to $\gamma$ does so with degree $q$. Let $\tau$ be a symplectic trivialization of $\xi|_\gamma$, which we can identify with the normal bundle to $\gamma$ in $Y$. Then there is an integer $p_\tau$, relatively prime to $q$, such that with respect to $\tau$, the conormal vector to $\Sigma$ rotates $p_\tau$ times as one goes around a component of $\partial\Sigma$ in the positive Reeb direction. Define the ``rotation number of $\gamma$ with respect to $\Sigma$'' by
\[
\op{rot}_\Sigma(\gamma) = \op{rot}_\tau(\gamma) - \frac{p_\tau}{|q|}.
\]
\end{definition}

Note that $\op{rot}_\Sigma(\gamma)$ does not depend on the choice of trivialization $\tau$. One can think of $\op{rot}_\Sigma(\gamma)$ as the rotation number of $\gamma$ with respect to a fractional trivialization of $\xi|_\gamma$ determined by $\Sigma$. Since the interior of an admissible symplectic surface is positively transverse to the Reeb vector field, it follows that
\[
\op{rot}_\Sigma(\gamma) \neq 0,
\]
and the sign of $\op{rot}_\Sigma(\gamma)$ agrees with the sign of $q$ above.

\begin{definition}
If $\gamma$ is a simple Reeb orbit which is not in the image $u(\partial\Sigma)$, define $\gamma\cdot\Sigma$ to be the algebraic count of intersections of $\gamma$ with $\Sigma$. Note that each such intersection has multiplicity $+1$, so that $\gamma\cdot\Sigma\ge 0$.
\end{definition}

We now extend the above notion to Reeb orbits which are in the image $u(\partial\Sigma)$, in the spirit of \cite[\S5]{calabi}.

\begin{definition}
\label{def:gammadotsigma}
If $\gamma$ is a simple Reeb orbit in $u(\partial\Sigma)$, let $m$ denote the number of components of $\partial\Sigma$ mapping to $\gamma$, and let $q$ be as in Definition~\ref{def:rotsigma}. Define
\[
\gamma\cdot\Sigma = mq\cdot\op{rot}_\Sigma(\gamma) > 0.
\]
\end{definition}

The motivation for this definition is that if $k$ is a large positive integer, then a Reeb trajectory disjoint from $\gamma$ which stays near $\gamma$ and follows parallel to $\gamma$ approximately $k$ times will intersect $\op{int}(\Sigma)$ approximately $k(\gamma\cdot\Sigma)$ times.

In our main theorem we will need the following simplifying assumption on the triple $(Y,\lambda,u)$.

\begin{definition}
\label{def:newnice}
Let $u:\Sigma\to (Y,\lambda)$ be an admissible symplectic surface. We say that the triple $(Y,\lambda,u)$ is {\bf nice\/} if for each simple Reeb orbit $\gamma$ in $u(\partial\Sigma)$ with action $T$, there is a neighborhood $N(\gamma)$ of $\gamma$ and an identification
\begin{equation}
\label{eqn:Ngammanew}
\psi: (\R/T\Z)\times D^2(r_0) \stackrel{\simeq}{\longrightarrow} N(\gamma)
\end{equation}
for some $r_0>0$ in which the Reeb vector field is given by
\begin{equation}
\label{eqn:modelReeb}
R = \partial_t + \frac{2\pi\rho}{T}\partial_\theta
\end{equation}
for some $\rho\in\R$. Here $t$ denotes the $\R/T\Z$ coordinate, and we use polar coordinates $r,\theta$ on $D^2(r_0)$. In particular, $\gamma$ is elliptic, and $\rho=\op{rot}_\tau(\gamma)$, where $\tau$ is the homotopy class of trivialization of $\gamma^*\xi$ determined by the derivative of \eqref{eqn:Ngammanew}.
\end{definition}

We can now state our main theorem. Roughly speaking, this asserts the existence of Reeb orbits with a certain lower bound on the frequency of intersections with $\Sigma$. Recall that the {\bf contact volume\/} is defined by
\[
\op{vol}(Y,\lambda) = \int_Y\lambda\wedge d\lambda > 0.
\]

\begin{theorem}
\label{thm:main1}
Let $Y$ be a closed connected three-manifold, let $\lambda$ be a contact form on $Y$, and let $u:\Sigma\to Y$ be an admissible symplectic surface. Suppose that $(Y,\lambda,u)$ is nice. Then
\begin{equation}
\label{eqn:boxed}
\boxed{
\sup_{\substack{\gamma\in\mathcal{P}(\lambda)}}\frac{\gamma\cdot\Sigma}{\mathcal{A}(\gamma)} \ge \frac{\op{Area}(\Sigma,d\lambda)}{\op{vol}(Y,\lambda)}.
}
\end{equation}
\end{theorem}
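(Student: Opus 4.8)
The plan is to argue by contradiction using an embedded contact homology (ECH) / volume-type argument, adapting the Weyl-law techniques behind the Irie-style equidistribution and closing results to this setting with boundary. Suppose the inequality \eqref{eqn:boxed} fails, so there is a constant $c$ with
\[
\frac{\gamma\cdot\Sigma}{\mathcal{A}(\gamma)} \le c < \frac{\op{Area}(\Sigma,d\lambda)}{\op{vol}(Y,\lambda)}
\]
for every simple Reeb orbit $\gamma$. First I would pass to the case of a nondegenerate contact form: one cannot simply perturb $\lambda$ because both sides of \eqref{eqn:boxed} would change and the niceness hypothesis is delicate, so instead I expect to work directly, or to perturb only away from a neighborhood of $u(\partial\Sigma)$ while controlling how $\gamma\cdot\Sigma$ and the actions vary under the perturbation. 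The key object is the ECH of $(Y,\lambda)$ (or of the relevant piece), together with the ``$U$-map'' and the action filtration. The volume property of ECH (Cristofaro-Gardiner--Hutchings--Ramos) says that the ECH spectrum $c_k(Y,\lambda)$ satisfies $c_k(Y,\lambda)^2/k \to 2\op{vol}(Y,\lambda)$ as $k\to\infty$. The plan is to manufacture, from the admissible symplectic surface $u$, a class in ECH whose pairing records $\gamma\cdot\Sigma$ along the boundary orbits and the area $\op{Area}(\Sigma,d\lambda)$, so that a sequence of generators of low action but forced intersection number with $\Sigma$ appears in the ECH chain complex.

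More concretely, the idea is: an ECH generator $\alpha = \{(\alpha_i,m_i)\}$ is a finite set of simple Reeb orbits with multiplicities. Define $\alpha\cdot\Sigma = \sum_i m_i(\alpha_i\cdot\Sigma)$, extending the intersection count, and define $\mathcal{A}(\alpha) = \sum_i m_i\mathcal{A}(\alpha_i)$. Then the contradiction hypothesis gives $\alpha\cdot\Sigma \le c\,\mathcal{A}(\alpha)$ for every ECH generator. Now I would use the following mechanism, analogous to how closing lemmas are proven: consider the family of contact forms $\lambda_s$ obtained by a supported perturbation $\lambda_s = (1+s f)\lambda$ where $f\ge 0$ is supported in a small ball $B$ inside $u(\op{int}(\Sigma))$, chosen so that the Reeb flow of $\lambda_s$ in $B$ ``wraps around'' and a Reeb orbit passing through $B$ picks up many extra intersections with $\Sigma$. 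The derivative of the ECH spectrum $c_k$ with respect to $s$ is, by the key differentiability result of Cristofaro-Gardiner--Hutchings--Ramos and its refinements, given by $\int_\gamma f\,\lambda$ for a certain orbit set $\gamma$ realizing $c_k(\lambda_s)$. By choosing the support $B$ to be a thin tube following a Reeb trajectory that crosses $\Sigma$ once, one can arrange that this derivative is comparable to the number of times the realizing orbit set crosses $\Sigma$ near $B$; summing/integrating in $s$ and comparing with the Weyl law forces a realizing orbit set with large $\alpha\cdot\Sigma$ relative to its action, contradicting the hypothesis once the constants are tracked: the relevant ratio on the ECH side converges to $\op{Area}(\Sigma,d\lambda)/\op{vol}(Y,\lambda)$ precisely because of the volume Weyl law and because the ``total flux'' of $R$ through $\Sigma$ per unit action is controlled by $\op{Area}(\Sigma,d\lambda)$.

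The role of the niceness hypothesis is to handle the boundary: near each boundary orbit $\gamma$ the model \eqref{eqn:modelReeb} lets one compute $\op{rot}_\Sigma(\gamma)$ explicitly and, crucially, lets one see that a Reeb orbit spiraling near $\gamma$ intersects $\op{int}(\Sigma)$ at the rate $\gamma\cdot\Sigma$ per period, matching Definition~\ref{def:gammadotsigma}; this is exactly what makes the extension of $\alpha\cdot\Sigma$ to orbit sets touching the boundary behave additively and continuously under the perturbations above. Without it, orbits passing near $u(\partial\Sigma)$ could have wildly varying intersection behavior and the counting argument would break. I would also need to know the ECH is well behaved: either $Y$ is such that $\op{ECH}(Y,\lambda)$ is infinite-dimensional (always true — it is isomorphic to $\widehat{HM}$ which is infinite-dimensional for closed $Y$), so the spectral numbers $c_k$ are defined for all $k$, and the $U$-map gives the chain of classes needed.

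The main obstacle I anticipate is the bookkeeping near the boundary and making the perturbation argument quantitatively sharp: one must choose the perturbation so that the gain in $\alpha\cdot\Sigma$ exactly tracks $\op{Area}(\Sigma,d\lambda)$ in the limit (not just up to a constant), and one must ensure the perturbation does not destroy niceness or introduce degenerate boundary orbits — this is why the perturbation is confined to the interior. Equivalently, one needs a clean statement that, for the perturbed forms, $\frac{d}{ds}c_k(\lambda_s)\big|_{s}$ equals the flux through $\Sigma$ of the realizing orbit set times a factor governed by how $f$ was chosen, and then that $\int_0^1$ of these derivatives, compared against the Weyl law asymptotics of $c_k(\lambda_0)$ and $c_k(\lambda_1)$, yields a realizing orbit set $\alpha$ with $\alpha\cdot\Sigma/\mathcal{A}(\alpha)$ as close as desired to $\op{Area}(\Sigma,d\lambda)/\op{vol}(Y,\lambda)$. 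Passing from orbit sets to a single simple Reeb orbit is then a pigeonhole step: if $\sum_i m_i(\alpha_i\cdot\Sigma) \ge (1-\epsilon)\frac{\op{Area}}{\op{vol}}\sum_i m_i\mathcal{A}(\alpha_i)$ then some $\alpha_i$ satisfies $\alpha_i\cdot\Sigma/\mathcal{A}(\alpha_i) \ge (1-\epsilon)\frac{\op{Area}}{\op{vol}}$, and letting $\epsilon\to 0$ and taking the supremum finishes the proof.
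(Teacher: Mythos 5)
Your high-level strategy (deform $\lambda$, use spectrality plus almost-everywhere differentiability of spectral invariants to bound their growth from above under the assumed bound on $\gamma\cdot\Sigma/\mathcal{A}(\gamma)$, and play this off against a volume Weyl law) is the same as the paper's, but the specific deformation you propose cannot produce the sharp constant, and this is a genuine gap rather than a bookkeeping issue. If $f\ge 0$ is supported in a small ball or thin tube $B\subset u(\op{int}(\Sigma))$, then at a point of differentiability the derivative of $c_k(\lambda_s)$ is $\int_{\alpha} f\lambda$ for a realizing orbit set $\alpha$ of $\lambda_s$, i.e.\ it records time spent in $\op{supp}(f)$, while the contact volume grows only at rate comparable to $\int_Y f\,\lambda\wedge d\lambda$, which is proportional to the (small) volume of the tube, not to $\op{Area}(\Sigma,d\lambda)$. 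Comparing with the Weyl law then yields orbit sets crossing $\Sigma$ near $B$ with frequency governed by the tube's size --- a closing-lemma-type statement whose constant degenerates with $B$ --- and no mechanism in your sketch makes ``the relevant ratio converge to $\op{Area}(\Sigma,d\lambda)/\op{vol}(Y,\lambda)$.'' To get that constant the deformation must be supported on a flow-box neighborhood of \emph{all} of $\Sigma$ (the paper's slab $\Phi([0,s_0]\times\op{int}(\Sigma))$), normalized so that each crossing of $\Sigma$ adds exactly $\delta$ to the action of a persisting orbit while the volume gains essentially $2\delta\op{Area}(\Sigma,d\lambda)$; this is the content of Lemma~\ref{lem:slab}(a),(b) and equations \eqref{eqn:volumeincrease}--\eqref{eqn:actionincrease}.

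Once the inflation reaches a neighborhood of the whole surface, the real difficulty --- which your proposal does not engage, since your perturbation deliberately avoids the boundary --- is at $u(\partial\Sigma)$, where $\Sigma$ spirals into the boundary orbits: the deformed Reeb field has orbits not corresponding to orbits of $\lambda$, and your contradiction hypothesis, which concerns orbits of $\lambda$, does not directly bound $\int_{\alpha'}\frac{d}{d\delta}\lambda_\delta$ or $\alpha'\cdot\Sigma/\mathcal{A}_{\lambda_\delta}(\alpha')$ for realizing orbit sets $\alpha'$ of $\lambda_\delta$. The paper handles this by first isotoping the surface to a ``very nice'' normal form (Lemma~\ref{lem:isotopesurface}, Definition~\ref{def:verynice}) and then proving the quantitative estimates \eqref{eqn:mqrho} and \eqref{eqn:derivativesmall} of Lemma~\ref{lem:slab}(c); niceness is used for these explicit computations, not merely to make $\gamma\cdot\Sigma$ ``behave additively.'' Finally, your treatment of the degenerate case is incomplete: to pass from nondegenerate approximations $\lambda(n)$ to $\lambda$ one needs a uniform action bound on the orbits produced, which is why the paper derives the growth inequality at a \emph{fixed} finite pair $(k,l)$ via the capacity bound with the alternative ECH capacities (Lemmas~\ref{lem:invokeweyl} and \ref{lem:nondegeneratemain}), so that it transfers by $C^0$-continuity; ``integrating in $s$ and comparing with the Weyl law'' for each $\lambda(n)$ separately leaves the actions of the resulting orbits uncontrolled and the limiting step unjustified.
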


Theorem~\ref{thm:main1} guarantees for any $\epsilon>0$ the existence of a Reeb orbit $\gamma$ with
\begin{equation}
\label{eqn:guarantees}
\frac{\gamma\cdot\Sigma}{\mathcal{A}(\gamma)} > \frac{\op{Area}(\Sigma,d\lambda)}{\op{vol}(Y,\lambda)} - \epsilon.
\end{equation}
Note however that the Reeb orbit $\gamma$ might be contained in $u(\partial\Sigma)$. It could even be the case that there are no Reeb orbits at all intersecting $\op{int}(\Sigma)$; see Example~\ref{ex:ellipsoid2} below. If we make an additional hypothesis to rule out this possibility, then we obtain information about Reeb orbits that intersect $\op{int}(\Sigma)$. Namely, Theorem~\ref{thm:main1} is equivalent to the following slightly longer statement:

\begin{corollary}
\label{cor:main2}
Let $Y$ be a closed connected three-manifold, let $\lambda$ be a contact form on $Y$, and let $u:\Sigma\to Y$ be an admissible symplectic surface. Suppose that $(Y,\lambda,u)$ is nice. Suppose that for every simple Reeb orbit $\gamma\subset u(\partial\Sigma)$, we have
\begin{equation}
\label{eqn:infhyp}
\frac{\gamma\cdot\Sigma}{\mathcal{A}(\gamma)} < \frac{\op{Area}(\Sigma,d\lambda)}{\op{vol}(Y,\lambda)}.
\end{equation}
Then
\begin{equation}
\label{eqn:infbound}
\sup_{\substack{\gamma\in\mathcal{P}(\lambda)\\ \gamma\cap\op{int}(\Sigma)\neq\emptyset}}\frac{\gamma\cdot\Sigma}{\mathcal{A}(\gamma)} \ge \frac{\op{Area}(\Sigma,d\lambda)}{\op{vol}(Y,\lambda)}.
\end{equation}
\end{corollary}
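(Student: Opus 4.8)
The plan is to prove Theorem~\ref{thm:main1}, from which Corollary~\ref{cor:main2} follows by an elementary argument given at the end. The strategy adapts the Floer-theoretic arguments behind the closing and equidistribution results of \cite{irieclosing,irieequi,altspec} and, more directly, the mean action results for area-preserving surface diffeomorphisms \cite{calabi,pfh4,prasad}: one uses spectral invariants in embedded contact homology (ECH) graded by the intersection number with $\Sigma$, together with a Weyl law obtained by comparison with the volume property of ECH capacities \cite{vc}.

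\emph{Step 1: reduction to orbit sets.} Extend $\gamma\cdot\Sigma$ to orbit sets (ECH generators) $\alpha=\{(\gamma_i,m_i)\}$ by $\alpha\cdot\Sigma=\sum_i m_i(\gamma_i\cdot\Sigma)$, using Definition~\ref{def:gammadotsigma} for boundary orbits; note $\gamma_i\cdot\Sigma\ge 0$, with equality exactly when $\gamma_i$ is disjoint from $\Sigma$. If $\alpha\cdot\Sigma>0$ then
\[
\frac{\alpha\cdot\Sigma}{\mathcal{A}(\alpha)}=\frac{\sum_i m_i(\gamma_i\cdot\Sigma)}{\sum_i m_i\mathcal{A}(\gamma_i)}\le\max_{\,i:\,\gamma_i\cdot\Sigma>0}\frac{\gamma_i\cdot\Sigma}{\mathcal{A}(\gamma_i)},
\]
since the orbits $\gamma_i$ with $\gamma_i\cdot\Sigma=0$ only enlarge the denominator. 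Hence it suffices to produce orbit sets $\alpha_n$ with $\alpha_n\cdot\Sigma\to\infty$ and $(\alpha_n\cdot\Sigma)/\mathcal{A}(\alpha_n)\to\op{Area}(\Sigma,d\lambda)/\op{vol}(Y,\lambda)$: each $\alpha_n$ then contains a simple orbit $\gamma_n$ with $(\gamma_n\cdot\Sigma)/\mathcal{A}(\gamma_n)\ge(\alpha_n\cdot\Sigma)/\mathcal{A}(\alpha_n)>0$, and letting $n\to\infty$ gives \eqref{eqn:boxed}.

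\emph{Steps 2 and 3: spectral invariants graded by $\Sigma$, and a Weyl law.} Using the nice hypothesis, I would show that $\alpha\cdot\Sigma$ descends to a filtration on the ECH chain complex of $(Y,\lambda)$: the differential and the $U$-map do not increase $\alpha\cdot\Sigma$, because the relevant holomorphic curves in the symplectization have nonnegative algebraic intersection with $\Sigma$ (suitably interpreted), and the model \eqref{eqn:modelReeb} makes this precise near the boundary orbits, where $\Sigma$ spirals. This produces, for each admissible value $d$ of $\alpha\cdot\Sigma$, a spectral invariant $c_d^\Sigma(\lambda)\in\R_{>0}$, realized as the symplectic action of an ECH generator $\alpha_d$ with $\alpha_d\cdot\Sigma=d$, and nondecreasing in $d$. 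The crux is then the estimate
\[
\limsup_{d\to\infty}\frac{c_d^\Sigma(\lambda)}{d}\le\frac{\op{vol}(Y,\lambda)}{\op{Area}(\Sigma,d\lambda)},
\]
which I would deduce from the volume property $c_k(Y,\lambda)^2/k\to 2\op{vol}(Y,\lambda)$ of \cite{vc}: the ECH index formula, together with the control on the Conley--Zehnder indices and relative self-intersection numbers of the boundary orbits supplied by \eqref{eqn:modelReeb}, relates the index grading used by the ECH capacities $c_k$ to the $\Sigma$-degree grading, transporting the capacity asymptotics to the displayed bound. Granting this, $\alpha_d$ satisfies $(\alpha_d\cdot\Sigma)/\mathcal{A}(\alpha_d)=d/c_d^\Sigma(\lambda)$, so
\[
\sup_{\gamma\in\mathcal{P}(\lambda)}\frac{\gamma\cdot\Sigma}{\mathcal{A}(\gamma)}\ge\limsup_{d\to\infty}\frac{d}{c_d^\Sigma(\lambda)}=\Big(\liminf_{d\to\infty}\frac{c_d^\Sigma(\lambda)}{d}\Big)^{-1}\ge\frac{\op{Area}(\Sigma,d\lambda)}{\op{vol}(Y,\lambda)},
\]
by Step 1; this proves Theorem~\ref{thm:main1}. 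For Corollary~\ref{cor:main2}: \eqref{eqn:infbound} trivially implies \eqref{eqn:boxed}, and if \eqref{eqn:infhyp} fails then \eqref{eqn:boxed} holds via a boundary orbit; conversely, if \eqref{eqn:infhyp} holds for all (finitely many) boundary orbits, then for $n$ large the orbit $\gamma_n$ above has $(\gamma_n\cdot\Sigma)/\mathcal{A}(\gamma_n)$ exceeding every boundary-orbit ratio, so $\gamma_n\not\subset u(\partial\Sigma)$; having positive $\Sigma$-intersection, $\gamma_n$ must then meet $\op{int}(\Sigma)$, which gives \eqref{eqn:infbound}.

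\emph{Main obstacle.} The principal difficulty is the treatment of the boundary orbits $u(\partial\Sigma)$ throughout Steps 2 and 3. Because $\Sigma$ has boundary and the rotation numbers $\op{rot}_\Sigma(\gamma)$ need not be rational, $\alpha\cdot\Sigma$ is not a homological invariant and the boundary orbits contribute fractionally; the nice hypothesis is imposed precisely so that \eqref{eqn:modelReeb} is available to control the holomorphic curves near $u(\partial\Sigma)$ and to carry out the index bookkeeping linking the $\Sigma$-degree to the ECH index. A secondary issue is to pin down the set of admissible degrees $d$ and to verify that the comparison with the ECH capacities is uniform enough in $d$ to yield the Weyl-law estimate.
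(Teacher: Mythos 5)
Your final reduction is fine: the mediant inequality in Step~1, and the closing argument that under \eqref{eqn:infhyp} any simple orbit whose ratio exceeds the (finitely many) boundary-orbit ratios cannot lie in $u(\partial\Sigma)$, and hence, having positive intersection number with $\Sigma$, must meet $\op{int}(\Sigma)$ (an orbit not contained in $u(\partial\Sigma)$ is disjoint from it, by uniqueness of Reeb trajectories), is exactly the elementary equivalence the paper has in mind when it states that Corollary~\ref{cor:main2} and Theorem~\ref{thm:main1} are equivalent. So the issue is entirely with your proposed proof of Theorem~\ref{thm:main1}, where all of the analytic content sits.

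There the proposal has a genuine gap: Steps~2 and~3 are asserted, not proved, and they are precisely the hard part. First, that $\alpha\cdot\Sigma$ descends to a filtration on the ECH chain complex (differential and $U$-map nonincreasing) is the ``knot filtration'' technology, which has only been carried out in special settings (the disk in \cite{calabi}, the annulus in \cite{weiler}, torus-knot complements in \cite{nw}); for a general nice admissible surface, with several boundary components, irrational $\op{rot}_\Sigma$, and multiply covered boundary behavior, $\alpha\cdot\Sigma$ is not a homological quantity and $\R\times\Sigma$ is not $J$-holomorphic, so positivity of intersections cannot simply be invoked — no construction is given. Second, even granting the filtration, the crucial estimate $\limsup_d c_d^\Sigma(\lambda)/d\le \op{vol}(Y,\lambda)/\op{Area}(\Sigma,d\lambda)$ does not follow from the ECH volume property by ``index bookkeeping'': the Weyl law of \cite{vc} ties action to the ECH index, and converting it into an asymptotic in the $\Sigma$-degree is exactly the quantitative comparison the theorem encodes; you offer no argument, and it is unclear it can be made without genericity. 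Note that the paper deliberately avoids this route: it never filters ECH, but instead inflates the contact form near $\Sigma$ (Lemma~\ref{lem:slab}), uses Spectrality plus almost-everywhere differentiability (Lemma~\ref{lem:derivative}) to show that an upper bound $F$ on $\gamma\cdot\Sigma/\mathcal{A}(\gamma)$ forces $c_k(Y,\lambda_\delta)<e^{\delta(F+\epsilon)}c_k(Y,\lambda)$ (Lemma~\ref{lem:slabderivative}), and contradicts this using the Capacity Bound \eqref{eqn:capacitybound} together with the two Weyl laws \eqref{eqn:wlr} and \eqref{eqn:altspecweyl} (Lemma~\ref{lem:invokeweyl}), followed by a limiting argument for degenerate $\lambda$. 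As written, your text is a program close in spirit to the earlier knot-filtration proofs rather than a proof of the statement.
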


The following examples illustrate the meaning of the numbers that appear in Corollary~\ref{cor:main2}.

\begin{example}
\label{ex:ellipsoid2}
Let $a,b>0$ be positive real numbers with $a/b\notin\Q$. Let $Y$ be the three-dimensional ellipsoid
\[
\partial E(a,b) = \left\{z\in\C^2\;\bigg|\; \frac{\pi|z_1|^2}{a} + \frac{\pi|z_2|^2}{b} = 1\right\}.
\]
Let $\lambda$ be the restriction to $Y$ of the standard Liouville form on $\R^4$ defined by
\begin{equation}
\label{eqn:standardLiouvilleform}
\lambda_0 = \frac{1}{2}\sum_{i=1}^2 \left(x_i\,dy_i - y_i\,dx_i\right).
\end{equation}
Then the Reeb vector field on $Y$ is given by
\begin{equation}
\label{eqn:ellipsoidR}
R = 2\pi\left(\frac{1}{a}\frac{\partial}{\partial\theta_1} + \frac{1}{b}\frac{\partial}{\partial\theta_2}\right)
\end{equation}
where $\theta_i$ denotes the argument of $z_i$. Thus there are just two simple Reeb orbits: the circle $\gamma_1=(\pi|z_1|^2=a,\;z_2=0)$, and the circle $\gamma_2=(z_1=0,\;\pi|z_2|^2=b)$. Their symplectic actions are
\begin{equation}
\label{eqn:ellipsoidaction}
\mathcal{A}(\gamma_1) = a, \quad \mathcal{A}(\gamma_2) = b.
\end{equation}
Also
\begin{equation}
\label{eqn:ellipsoidvolume}
\op{vol}(\partial E(a,b),\lambda) = ab.
\end{equation}

Now let $p,q>0$ be relatively prime positive integers. We can then form an admissible symplectic surface $u:\Sigma\to Y$ whose intersection with each torus in $Y$ where $|z_1|$ and $|z_2|$ are constant and nonzero is a line of constant slope in the coordinates $(\theta_1,\theta_2)$ representing the homology class $(p,q)$. (Compare \cite[\S1.4]{anchored}.) The triple $(Y,\lambda,u)$ is nice by \eqref{eqn:ellipsoidR}.

Suppose that $bq>ap$. Then the oriented boundary of $\Sigma$ is
\[
\partial\Sigma = q\gamma_2 - p\gamma_1,
\]
and
\begin{equation}
\label{eqn:ellipsoidarea}
\op{Area}(\Sigma,d\lambda) = bq - ap.
\end{equation}

Let $\tau$ be the homotopy class of trivialization of $\xi|_{\gamma_1}$ and $\xi|_{\gamma_2}$ given by the Seifert framings of $\gamma_1$ and $\gamma_2$. Then it follows from \eqref{eqn:ellipsoidR} and \eqref{eqn:ellipsoidaction} that
\[
\op{rot}_\tau(\gamma_1) = \frac{a}{b}, \quad \op{rot}_\tau(\gamma_2) = \frac{b}{a}.
\]
It follows from Definition~\ref{def:rotsigma} that
\[
\op{rot}_\Sigma(\gamma_1) = \frac{a}{b} - \frac{q}{p}, \quad \op{rot}_\Sigma(\gamma_2) = \frac{b}{a} - \frac{p}{q}.
\]
By Definition~\ref{def:gammadotsigma}, we have
\begin{equation}
\label{eqn:ellipsoidint}
\gamma_1\cdot\Sigma = \frac{bq-ap}{b}, \quad \gamma_2\cdot\Sigma = \frac{bq - ap}{a}.
\end{equation}
Combining \eqref{eqn:ellipsoidaction}, \eqref{eqn:ellipsoidvolume}, \eqref{eqn:ellipsoidarea}, and \eqref{eqn:ellipsoidint}, we conclude that the inequality \eqref{eqn:infhyp} does not hold for $\gamma_1$ or $\gamma_2$, but rather is an equality for both.
\end{example}

Corollary~\ref{cor:main2} does not tell us anything about the above example because the hypothesis \eqref{eqn:infhyp} is not satisfied, and in any case there are no Reeb orbits intersecting $\op{int}(\Sigma)$. 

\begin{example}
\label{ex:ellipsoid1}
To make a simpler example, let $Y$ be the ellipsoid as in Example~\ref{ex:ellipsoid2}, but now let $\Sigma$ be the disk in $Y$ where $z_2\ge 0$. This is a nice admissible symplectic surface with boundary $\gamma_1$. For this example we have
\[
\op{rot}_\Sigma(\gamma_1) = \gamma_1\cdot\Sigma = \frac{a}{b}.
\]
Also $\gamma_2\cdot\Sigma=1$ and $\op{Area}(\Sigma,d\lambda)=a$. Therefore
\[
\frac{\gamma_1\cdot\Sigma}{\mathcal{A}(\gamma_1)} = \frac{\gamma_2\cdot\Sigma}{\mathcal{A}(\gamma_2)} = \frac{1}{b} = \frac{\op{Area}(\Sigma,d\lambda)}{\op{vol}(Y,\lambda)}.
\]
So once again equality holds in \eqref{eqn:infhyp}, meaning that Corollary~\ref{cor:main2} is not applicable, but nonetheless $\gamma_2$ fulfills the conclusion \eqref{eqn:infbound} of Corollary~\ref{cor:main2}.
\end{example}

For a large family of examples where Corollary~\ref{cor:main2} leads to nontrivial conclusions, see \S\ref{sec:application} below. Meanwhile, to continue the discussion of the main theorem, let $\Phi:\R\times Y\to Y$ denote the flow of the Reeb vector field $R$.

\begin{remark}
The bound in \eqref{eqn:infbound} in Corollary~\ref{cor:main2} is the best possible without further hypotheses. The reason is that if $L>0$ satisfies $L\cdot\op{Area}(\Sigma,d\lambda) < \op{vol}(Y,\lambda)$, then the restriction of $\Phi$ to $[0,L]\times\op{int}(\Sigma)$ could be injective. Indeed this happens for all such $L$ in Examples~\ref{ex:ellipsoid2} and \ref{ex:ellipsoid1} above. In this case, a Reeb trajectory starting on $\op{int}(\Sigma)$ cannot return to $\Sigma$ in time $\le L$, so any Reeb orbit $\gamma$ intersecting $\op{int}(\Sigma)$ must satisfy $\mathcal{A}(\gamma)/(\gamma\cdot\Sigma) > L$. Compare \cite[Rmk.\ 1.14]{pfh4} and \cite[\S1.2]{altspec}.
\end{remark}

\begin{remark}
\label{rem:equi}
If $\lambda$ is $C^\infty$-generic, then in Corollary~\ref{cor:main2}, the inequality \eqref{eqn:infbound} holds without the hypothesis \eqref{eqn:infhyp}, and without requiring the boundary of $\Sigma$ to map to Reeb orbits, and we also have the reverse inequality
\begin{equation}
\label{eqn:supbound}
\inf_{\substack{\gamma\in\mathcal{P}(\lambda)\\ \gamma\cap\op{int}(\Sigma)\neq\emptyset}}\frac{\gamma\cdot\Sigma}{\mathcal{A}(\gamma)} \le \frac{\op{Area}(\Sigma,d\lambda)}{\op{vol}(Y,\lambda)}.
\end{equation}
This follows from an equidistribution property of Reeb orbits proved by Irie \cite[Cor.\ 1.4]{irieequi}. Irie's result (slightly restated) asserts that if $\lambda$ is $C^\infty$-generic, then there is a sequence of simple Reeb orbits $(\gamma_k)_{k \ge 1}$ such that
if $\mathcal{U}\subset Y$ is an open set, then
\begin{equation}
\label{eqn:irie}
\lim_{k\to\infty}\frac{\mathcal{A}(\gamma_1\cap\mathcal{U})+\cdots+\mathcal{A}(\gamma_k\cap\mathcal{U})}{\mathcal{A}(\gamma_1)+\cdots+\mathcal{A}(\gamma_k)} = \frac{\op{vol}(\mathcal{U},\lambda)}{\op{vol}(Y,\lambda)}.
\end{equation}
Here $\mathcal{A}(\gamma_k\cap\mathcal{U})$ denotes the time spent by the Reeb trajectory $\gamma_k$ in $\mathcal{U}$.

To apply this result to a symplectic surface $\Sigma$ in $Y$, assume that $\lambda$ is $C^\infty$-generic in the above sense, let $\epsilon>0$ be sufficiently small that the restriction of the Reeb flow $\Phi$ to $(0,\epsilon)\times\op{int}(\Sigma)$ is injective\footnote{To guarantee that such $\epsilon>0$ exists one might need to first modify the surface near the boundary, cf.\ \S\ref{sec:rnc}.}, and take $\mathcal{U}=\Phi((0,\epsilon)\times\op{int}(\Sigma))$. Then it follows from \eqref{eqn:irie} that
\[
\lim_{k\to\infty}\frac{\gamma_1\cdot\Sigma+\cdots+\gamma_k\cdot\Sigma}{\mathcal{A}(\gamma_1)+\cdots+\mathcal{A}(\gamma_k)} = \frac{\op{Area}(\Sigma,d\lambda)}{\op{vol}(Y,\lambda)}.
\]
The inequalities \eqref{eqn:infbound} and \eqref{eqn:supbound} follow immediately.

It was shown in \cite[Thm.\ 1.9]{bechara} that \eqref{eqn:infbound} and \eqref{eqn:supbound} hold under slightly different hypotheses: namely Irie's equidistribution holds (which is true for $\lambda$ generic), $\partial\Sigma$ maps to Reeb orbits, and $\op{Area}(\Sigma,d\lambda)\neq 0$, but $\Sigma$ is not required to be symplectic.
\end{remark}

\begin{remark}
\label{rem:notdense}
In Theorem~\ref{thm:main1} and Corollary~\ref{cor:main2}, if $\Phi(\R\times\Sigma)$ is not dense in $Y$, then one can replace $\op{vol}(Y,\lambda)$ by the smaller number $\op{vol}(\Phi(\R\times\Sigma),\lambda)$, which gives a stronger lower bound on the frequency. The reason is that one can define a contact form $\lambda'=e^{-f}\lambda$, where $f:Y\to[0,\infty)$ is supported in $Y\setminus\Phi(\R\times\Sigma)$, such that $\op{vol}(Y,\lambda')$ is close to $\op{vol}(\Phi(\R\times\Sigma),\lambda)$, and apply Theorem~\ref{thm:main1} to $\lambda'$.
\end{remark}


\subsection{Idea of the proof}

The proof of Theorem~\ref{thm:main1} uses the ``elementary spectral invariants'' $c_k(Y,\lambda)\in\R$ introduced in \cite{altspec}, and reviewed in \S\ref{sec:altspec}. These are related to the ECH spectral invariants defined in \cite{qech} but have a simpler definition. For any closed contact three-manifold $(Y,\lambda)$, the elementary spectral invariants are a sequence of real numbers
\[
0 = c_0(Y,\lambda) < c_1(Y,\lambda) \le c_2(Y,\lambda) \le \cdots
\]
satisfying various properties. One key property is ``spectrality'': each number $c_k(Y,\lambda)$ is the total action of some finite set of Reeb orbits. Another important property is that $c_k(Y,e^f\lambda)$ is a $C^0$-continuous function of $f$.  The most nontrivial property is the ``Weyl law''
\begin{equation}
\label{eqn:introweyl}
\lim_{k\to\infty}\frac{c_k(Y,\lambda)^2}{k}=2\op{vol}(Y,\lambda).
\end{equation}

To prove Theorem~\ref{thm:main1}, the idea is to define a one-parameter family of contact forms $\{\lambda_\delta\}_{\delta\ge 0}$ with $\lambda_0=\lambda$, where $\lambda_\delta$ is obtained from $\lambda$ by multiplying by the exponential of an appropriate function supported in a neighborhood of $\Sigma$. The contact form $\lambda_\delta$ is chosen so that, up to error terms that we omit here,
\begin{equation}
\label{eqn:volumeincrease}
\op{vol}(Y,\lambda_\delta) = \op{vol}(Y,\lambda) + 2\delta\cdot\op{Area}(\Sigma,d\lambda).
\end{equation}
In addition, a Reeb orbit $\gamma$ for $\lambda$ intersecting $\Sigma$ away from the boundary gives rise to a Reeb orbit $\gamma_\delta$ for $\lambda_\delta$ such that
\begin{equation}
\label{eqn:actionincrease}
\mathcal{A}_{\lambda_\delta}(\gamma_\delta) = \mathcal{A}_\lambda(\gamma) + \delta(\gamma\cdot\Sigma).
\end{equation}
The situation with Reeb orbits near the boundary of $\Sigma$ is more subtle, and we will ignore these Reeb orbits here; see Lemma~\ref{lem:slab} for the precise statement. (The ``niceness'' assumption in Definition~\ref{def:newnice} makes these subtleties more manageable.)

Now let $F$ be a real number and suppose that every Reeb orbit $\gamma$ for $\lambda$ satisfies
\begin{equation}
\label{eqn:suppose1}
\frac{\gamma\cdot\Sigma}{\mathcal{A}(\gamma)} \le F.
\end{equation}
Then from spectrality of $c_k$ and equation \eqref{eqn:actionincrease}, it follows that if $c_k(Y,\lambda_\delta)$ is a differentiable function of $\delta$ at $\delta=0$ (which is true for generic $\lambda$), then
\begin{equation}
\label{eqn:ifdiff}
\frac{d}{d\delta}\bigg|_{\delta=0}c_k(Y,\lambda_\delta) \le F\cdot c_k(Y,\lambda).
\end{equation}
On the other hand, the Weyl law \eqref{eqn:introweyl} for $\lambda_\delta$, combined with \eqref{eqn:volumeincrease}, gives
\[
c_k(Y,\lambda_\delta) = \sqrt{2k\left(
\op{vol}(Y,\lambda) + 2\delta\cdot\op{Area}(\Sigma,d\lambda)\right)} + \op{Error},
\]
where for a given $\delta$ the error term is $o(k^{1/2})$. As a heuristic, suppose that the above equation held without the error term. Then differentiating would give
\begin{equation}
\label{eqn:suppose2}
\frac{d}{d\delta}\bigg|_{\delta=0}c_k(Y,\lambda_\delta) = \frac{2k\cdot\op{Area}(\Sigma,d\lambda)}{c_k(Y,\lambda)} = \frac{\op{Area}(\Sigma,d\lambda)}{\op{vol}(Y,\lambda)} c_k(Y,\lambda).
\end{equation}
Combining \eqref{eqn:ifdiff} and \eqref{eqn:suppose2} would then give
\begin{equation}
\label{eqn:suppose3}
F\ge \frac{\op{Area}(\Sigma,d\lambda)}{\op{vol}(Y,\lambda)}.
\end{equation}
Thus \eqref{eqn:suppose1} would imply \eqref{eqn:suppose3}, proving Theorem~\ref{thm:main1}.

To convert the above heuristic into a proof, instead of differentiating the Weyl law, for suitable $\delta>0$ we will bound the difference $c_{k+l}(Y,\lambda_\delta) - c_k(Y,\lambda)$ from below in terms of the alternative ECH capacity $c_l^{\op{Alt}}$ (defined in \cite{altech} and reviewed in \S\ref{sec:altech}) of a four-dimensional region in the symplectization of $(Y,\lambda)$ determined by $\lambda_\delta$. Here $l$ is a certain multiple of $k$. If the conclusion of Theorem~\ref{thm:main1} fails, then similarly to \eqref{eqn:ifdiff}, this difference is too small when $k$ is large, contradicting the combination of the Weyl law for $c_k$ and another Weyl law for $c_l^{\op{Alt}}$.


\subsection{Questions for further research}

\begin{question}
One can use subleading asymptotics of spectral invariants, as in Remarks~\ref{rem:altechsubleading} and \ref{rem:altspecsubleading} below, to refine Theorem~\ref{thm:main1} to a quantitative statement, producing Reeb orbits $\gamma$ satisfying \eqref{eqn:guarantees} with an asymptotic upper bound on $\mathcal{A}(\gamma)$ in terms of $\epsilon$. What is the optimal bound?
\end{question}

\begin{question}
Can one remove the niceness hypothesis from Theorem~\ref{thm:main1}? If so, then Theorem~\ref{thm:main1} would admit many more examples. For example, for a reversible Finsler metric on a compact surface, this would imply the existence of geodesics that frequently intersect a given embedded geodesic. And for contact forms on connect sums as in \cite{luya}, this would imply the existence of Reeb orbits that frequently cross between the two halves of the connect sum.
\end{question}

\begin{question}
\label{q:main}
Under the hypotheses of Theorem~\ref{thm:main1}, does the reverse inequality
\[
\inf_{\substack{\gamma\in\mathcal{P}(\lambda)}}\frac{\gamma\cdot\Sigma}{\mathcal{A}(\gamma)} \le \frac{\op{Area}(\Sigma,d\lambda)}{\op{vol}(Y,\lambda)}
\]
hold?
\end{question}

\begin{question}
Under the hypotheses of Theorem~\ref{thm:main1}, is the closure of the set
\[
\left\{\frac{\gamma\cdot\Sigma}{\mathcal{A}(\gamma)} \;\bigg|\;\gamma\in\mathcal{P}(\lambda), \;\gamma\cap\Sigma \neq \emptyset\right\} \subset \R
\]
an interval?
\end{question}

\begin{question}
Does Theorem~\ref{thm:main1} have any analogue for intersections of Reeb orbits with symplectic hypersurfaces in higher dimensional contact manifolds?
\end{question}


\paragraph{The rest of the paper.}

In \S\ref{sec:application} we explain an application of Theorem~\ref{thm:main1} to mean action of periodic orbits and the Calabi invariant for area-preserving surface diffeomorphisms. The results in that section are not used in the rest of the paper. In \S\ref{sec:spectral} we review and develop the alternative ECH capacities and elementary spectral invariants that we will need in the proof of Theorem~\ref{thm:main1}. In \S\ref{sec:proof} we prove Theorem~\ref{thm:main1}.

\paragraph{Acknowledgments.} I thank Umberto Hryniewicz, Kei Irie, Rohil Prasad, and Jun Zhang for helpful comments on previous versions of this paper.

\section{Application to mean action and the Calabi invariant}
\label{sec:application}

In this section, as an application of Corollary~\ref{cor:main2}, we generalize (in Theorem~\ref{thm:calabi} below) some previous results relating the mean action of periodic orbits of an area-preserving surface diffeomorphism to the Calabi invariant of the diffeomorphism.

To state the result, let $\Sigma$ be a compact connected surface with nonempty boundary, and fix a component $B$ of $\partial\Sigma$. Let $\omega$ be a symplectic form on $\Sigma$. Let $\phi:(\Sigma,\omega)\circlearrowleft$ be a symplectomorphism.

\begin{definition}
The {\bf flux\/} of $\phi$ is the class
\begin{equation}
\label{eqn:flux}
\op{Flux}(\phi) \in \frac{H^1(\Sigma;\R)}{\op{Im}(\phi^*-1:H^1(\Sigma;\R)\circlearrowleft)}
\end{equation}
defined as follows. Let $\beta$ be a primitive of $\omega$. Since $\phi$ is a symplectomorphism, $\phi^*\beta-\beta$ is a closed 1-form. We define
\begin{equation}
\label{eqn:fluxdef}
\op{Flux}(\phi) = [\phi^*\beta-\beta] \in H^1(\Sigma;\R).
\end{equation}
This gives a well defined class \eqref{eqn:flux}, because replacing $\beta$ with a different primitive $\beta'$ of $\omega$ will shift the right hand side of \eqref{eqn:fluxdef} by $(\phi^*-1)[\beta'-\beta]$.
\end{definition}

Let us now make the following assumptions on the symplectomorphism $\phi$:
\begin{description}
\item{(i)}
 $\op{Flux}(\phi)=0$.
\item{(ii)} $\int_\Sigma\omega = 1$.
\item{(iii)}
The sequence $\op{Ker}(\phi^*-1:H^1(\Sigma;\R)\circlearrowleft) \longrightarrow H^1(\partial\Sigma;\R) \stackrel{\int_{\partial\Sigma}}{\longrightarrow} \R$ is exact\footnote{This holds for example if $\Sigma$ has only one boundary component, or if $\Sigma$ has genus zero, or if $\phi$ is isotopic to the identity.}.
\item{(iv)}
$\phi$ maps each component of $\partial\Sigma$ to itself, and $\phi$ is a rigid rotation near each component of the boundary. That is, if $Z$ is a component of $\partial\Sigma$, then there is a neighborhood $\mathcal{U}$ of $Z$ in $\Sigma$ with local coordinates $(r,\theta)\in(1-\epsilon,1]\times\R/2\pi\Z$ in which
\begin{equation}
\label{eqn:rigidrotation}
\phi(r,\theta)=(r,\theta+2\pi\theta_Z)
\end{equation}
for some real\footnote{Of course we only need an element of $\R/\Z$ to define the map \eqref{eqn:rigidrotation}, but below we will want to have chosen a specific lift of this number to $\R$.} number $\theta_Z$ depending only on the component $Z$.
\end{description}

\begin{lemma}
\label{lem:primitive}
Under the above assumptions, there exists a primitive of $\beta$ of $\omega$ such that $\phi^*\beta-\beta$ is exact; near the distinguished boundary component $B$ we have
\begin{equation}
\label{eqn:betaboundary1}
\beta = \frac{r^2\,d\theta}{2\pi};
\end{equation}
and near every other boundary component we have
\begin{equation}
\label{eqn:betaboundary2}
\beta = \frac{(r^2-1)d\theta}{2\pi},
\end{equation}
in local coordinates such that \eqref{eqn:rigidrotation} holds.
\end{lemma}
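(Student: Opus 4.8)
The plan is to construct $\beta$ by three successive corrections to an arbitrary primitive of $\omega$: a first correction by a closed $1$-form to make $\phi^*\beta-\beta$ exact; a second (the main one) by a closed $1$-form to install, at the level of cohomology, the prescribed behavior along each boundary circle; and a third by an exact $1$-form to upgrade this to an honest equality near the boundary. Throughout I use that in the coordinates of assumption~(iv) the symplectic form is $\omega=\pi^{-1}\,r\,dr\wedge d\theta$ near each boundary component, so that the candidate local models $\beta_B:=\frac{r^2\,d\theta}{2\pi}$ and, for $Z\neq B$, $\beta_Z:=\frac{(r^2-1)\,d\theta}{2\pi}$ are genuine primitives of $\omega$ on the corresponding collar neighborhoods $\mathcal{U}_Z$ (if this normalization is not part of the standing hypotheses, it can be arranged by a coordinate change near the boundary preserving the form~\eqref{eqn:rigidrotation}, using that $\phi^*\omega=\omega$). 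For the first correction: since $\op{Flux}(\phi)=0$, for any primitive $\beta$ of $\omega$ we may write $[\phi^*\beta-\beta]=(\phi^*-1)[\nu]$ with $\nu$ a closed $1$-form, and then $\beta_0:=\beta-\nu$ is again a primitive of $\omega$ with $\phi^*\beta_0-\beta_0$ exact. This is the only place assumption~(i) is used.

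The heart of the argument is the second correction. For each boundary component $Z$, both $\beta_0$ and $\beta_Z$ are primitives of $\omega$ near $Z$, so $(\beta_0-\beta_Z)|_Z$ represents a class $c_Z\in H^1(Z;\R)$; assemble these into $c=(c_Z)_Z\in H^1(\partial\Sigma;\R)$. A short computation gives $\int_B\beta_B=1$ and $\int_Z\beta_Z=0$ for $Z\neq B$, taken with the boundary orientations, while Stokes' theorem and assumption~(ii) give $\int_{\partial\Sigma}\beta_0=\int_\Sigma\omega=1$; hence $\int_{\partial\Sigma}c=0$, i.e.\ $c$ lies in the kernel of $\int_{\partial\Sigma}:H^1(\partial\Sigma;\R)\to\R$. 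By the exactness hypothesis~(iii), $c$ is therefore the image, under restriction to $H^1(\partial\Sigma;\R)$, of some class $[\mu]\in\op{Ker}(\phi^*-1:H^1(\Sigma;\R)\circlearrowleft)$. Picking a closed representative $\mu$ and setting $\beta_1:=\beta_0-\mu$, I get that $\beta_1$ is still a primitive of $\omega$; that $\phi^*\beta_1-\beta_1$ is still exact, because $[\mu]\in\op{Ker}(\phi^*-1)$ forces $\phi^*\mu-\mu$ to be exact; and that on each collar $\mathcal{U}_Z$ the closed $1$-form $\beta_1-\beta_Z$ has vanishing period around $Z$ by the choice of $\mu$, hence is exact, say $\beta_1-\beta_Z=dg_Z$ on $\mathcal{U}_Z$.

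For the third correction, fix cutoff functions $\chi_Z$ equal to $1$ near $Z$ and supported in $\mathcal{U}_Z$, set $G:=\sum_Z\chi_Z g_Z$ (extended by zero to $\Sigma$), and put $\beta:=\beta_1-dG$. Then $d\beta=\omega$; also $\phi^*\beta-\beta=(\phi^*\beta_1-\beta_1)-d(G\circ\phi-G)$ is a difference of exact forms, hence exact; and wherever $\chi_Z\equiv 1$ and the other cutoffs vanish we have $\beta=\beta_1-dg_Z=\beta_Z$, which is the asserted local normal form. I expect the genuine obstacle to be the middle step: the correcting class $[\mu]$ must be chosen to lie in $\op{Ker}(\phi^*-1)$ \emph{and} to restrict to the boundary-discrepancy class $c$, and these two demands are compatible only because $c$ has total boundary period zero---which is precisely the point at which the area normalization~(ii) and the exactness hypothesis~(iii) are invoked. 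The first and third corrections are routine once~(i) and the collar normalization are available.
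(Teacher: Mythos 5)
Your proposal is correct and follows essentially the same route as the paper's (much terser) proof: use flux-vanishing to make $\phi^*\beta-\beta$ exact, use (ii) and (iii) to add a closed $1$-form in $\Ker(\phi^*-1)$ fixing the boundary periods, then add an exact $1$-form supported in the collars to achieve the local models. Your extra remark about normalizing $\omega$ to $\pi^{-1}r\,dr\wedge d\theta$ in the coordinates of (iv) addresses a point the paper leaves implicit, and is handled appropriately.
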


\begin{proof}
By (i), we can choose a primitive $\beta$ of $\omega$ such that $\phi^*\beta-\beta$ is exact. By (ii), we have $\int_{\partial\Sigma}\beta=1$. By (iii), by adding a closed $1$-form to $\beta$, we can arrange that $\beta$ integrates to $1$ on $B$ and to $0$ on every other component of $\partial\Sigma$, while maintaining the exactness of $\phi^*\beta-\beta$. By (iv), by adding to $\beta$ an exact $1$-form supported in a neighborhood of $\partial\Sigma$, we can further arrange that the boundary conditions \eqref{eqn:betaboundary1} and \eqref{eqn:betaboundary2} hold.
\end{proof}

Let $\beta$ be a primitive of $\omega$ given by Lemma~\ref{lem:primitive}.
There is then a unique function $f_\beta:\Sigma\to\R$ such that
\begin{equation}
\label{eqn:fbeta}
\begin{split}
df_\beta &= \phi^*\beta - \beta,\\
{f_\beta}|_B & \equiv \theta_B.
\end{split}
\end{equation}
Note that $f_\beta$ is constant on each component of $\partial\Sigma$.

\begin{definition}
\label{def:calabi}
Define the {\bf Calabi invariant\/}
\[
\op{Cal}(\phi,\theta_B,\beta) = \int_\Sigma f_\beta\omega \in \R.
\]
\end{definition}

Next, a {\bf periodic orbit\/} of $\phi$ is a cycle $\gamma = (x_1,\ldots,x_{d(\gamma)})$ of points in $\Sigma$ (which we will assume are distinct) with $\phi(x_i)=x_{i+1\mod d(\gamma)}$ for each $i=1,\ldots,d(\gamma)$. Let $\mathcal{P}(\phi)$ denote the set of periodic orbits of $\phi$.

\begin{definition}
\label{def:action}
If $\gamma=(x_1,\ldots,x_{d(\gamma)})$ is a periodic orbit of $\phi$, define the {\bf action\/}
\[
\mathcal{A}_\beta(\gamma) = \sum_{i=1}^{d(\gamma)} f_\beta(x_i).
\]
The {\bf mean action\/} of $\gamma$ is $\mathcal{A}_\beta(\gamma)/d(\gamma)$.
\end{definition}

We now examine the dependence of the mean action and the Calabi invariant on the choice of $\beta$.

\begin{lemma}
\label{lem:exactdifference}
If $\beta'$ is another primitive of $\omega$ such that $\beta'-\beta$ is exact and $\beta'$ also satisfies the boundary conditions \eqref{eqn:betaboundary1} and \eqref{eqn:betaboundary2}, then:
\begin{description}
\item{(a)}
 $\op{Cal}(\phi,\theta_B,\beta')=\op{Cal}(\phi,\theta_B,\beta)$.
 \item{(b)}
 If $\gamma=(x_1,\ldots,x_{d(\gamma)})$ is a periodic orbit of $\phi$, then
 $\mathcal{A}_{\beta'}(\gamma) = \mathcal{A}_\beta(\gamma)$.
 \end{description}
\end{lemma}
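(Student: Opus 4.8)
The plan is to write $\beta'-\beta=dg$ for a function $g\colon\Sigma\to\R$ — which exists because $\beta'-\beta$ is exact and $\Sigma$ is connected — and then express $f_{\beta'}$ in terms of $f_\beta$ and $g$. First I would substitute $\beta'=\beta+dg$ into the defining equation \eqref{eqn:fbeta} for $f_{\beta'}$, obtaining
\[
df_{\beta'} = \phi^*\beta'-\beta' = (\phi^*\beta-\beta) + d(g\circ\phi - g) = d\bigl(f_\beta + g\circ\phi - g\bigr),
\]
so that $f_{\beta'} - f_\beta - (g\circ\phi-g)$ is a constant on the connected surface $\Sigma$. Next I would show this constant vanishes by evaluating at a point of $B$: since $\beta$ and $\beta'$ both satisfy the boundary condition \eqref{eqn:betaboundary1}, they agree on a connected coordinate neighborhood $\mathcal{U}$ of $B$, so $dg=0$ on $\mathcal{U}$ and hence $g$ is constant on $\mathcal{U}\supset B$; combined with $\phi(B)=B$ from assumption (iv), this gives $g\circ\phi-g\equiv 0$ on $B$, while $f_{\beta'}|_B=f_\beta|_B=\theta_B$ by the normalization in \eqref{eqn:fbeta}. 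Therefore $f_{\beta'} = f_\beta + g\circ\phi - g$.

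With this identity, part (b) is immediate: for a periodic orbit $\gamma=(x_1,\dots,x_{d(\gamma)})$ with $\phi(x_i)=x_{i+1}$ (indices mod $d(\gamma)$), the sum $\sum_i\bigl(g(\phi(x_i))-g(x_i)\bigr)=\sum_i\bigl(g(x_{i+1})-g(x_i)\bigr)$ telescopes to zero, so $\mathcal{A}_{\beta'}(\gamma)=\mathcal{A}_\beta(\gamma)$.

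For part (a), I would compute
\[
\op{Cal}(\phi,\theta_B,\beta') - \op{Cal}(\phi,\theta_B,\beta) = \int_\Sigma (g\circ\phi - g)\,\omega,
\]
and show the right side vanishes using that $\phi$ is a symplectomorphism: since $\phi^*\omega=\omega$ and $\phi\colon\Sigma\to\Sigma$ is an orientation-preserving diffeomorphism, the change-of-variables formula gives $\int_\Sigma (g\circ\phi)\,\omega=\int_\Sigma \phi^*(g\,\omega)=\int_\Sigma g\,\omega$, so the two terms cancel.

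I do not anticipate a serious obstacle; the only point requiring care is the vanishing of the additive constant relating $f_{\beta'}$ to $f_\beta+g\circ\phi-g$, which is precisely where the boundary normalization \eqref{eqn:betaboundary1} and the rigid-rotation hypothesis (iv) near $B$ are used.
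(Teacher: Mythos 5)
Your proof is correct. For part (a) it is essentially the paper's argument --- the paper also writes $\beta'=\beta+d\mu$, asserts $f_{\beta'}-f_\beta=\phi^*\mu-\mu$, and concludes by the change-of-variables identity $\int_\Sigma(\phi^*\mu)\,\omega=\int_\Sigma\mu\,\omega$ --- but you supply a step the paper leaves implicit: the verification that the additive constant in $f_{\beta'}-f_\beta-(g\circ\phi-g)$ vanishes, which you get from the shared normalization \eqref{eqn:betaboundary1} (so $\beta'=\beta$ near $B$, making $g$ locally constant there) together with \eqref{eqn:rigidrotation} and the boundary condition ${f_\beta}|_B={f_{\beta'}}|_B=\theta_B$ in \eqref{eqn:fbeta}. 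For part (b) you take a genuinely different route: the paper never uses the identity $f_{\beta'}=f_\beta+g\circ\phi-g$; instead it chooses a path $\eta$ from a point of $B$ to $x_1$, writes $f_\beta(x_i)=\int_{\phi^i_*\eta-\phi^{i-1}_*\eta}\beta$, sums to express $\mathcal{A}_\beta(\gamma)$ as the integral of $\beta$ over the cycle $\phi^{d(\gamma)}_*\eta-\eta+\zeta$ (closed up by a boundary arc $\zeta$, evaluated via \eqref{eqn:betaboundary1}) minus $d(\gamma)\theta_B$, and then notes that such a cycle integral is unchanged when $\beta$ is replaced by $\beta'$ with exact difference. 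Your telescoping argument over the orbit is shorter and more elementary, though it leans on having pinned down the constant in the identity; the paper's cycle argument is independent of that identity and exhibits directly that the action depends only on the cohomology class of $\beta$ together with the boundary normalization. Both are valid.
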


\begin{proof}
(a) (cf.\ \cite{gg})
If $\beta'=\beta+d\mu$ where $\mu:\Sigma\to\R$, then $f_{\beta'}-f_\beta = \phi^*\mu-\mu$, so
\[
\int_\Sigma f_{\beta'}\omega - \int_\Sigma f_\beta\omega = \int(\phi^*\mu)\omega - \int\mu\omega.
\]
The right hand side is zero because $\phi$ is a symplectomorphism.

(b) (cf. \cite{calabi}) Let $\eta$ be a path in $\Sigma$ from a point $p$ on $B$ to $x_1$. Then by equation \eqref{eqn:fbeta},
\[
f_\beta(x_i) = \int_{\phi^{i}_*\eta - \phi^{i-1}_*\eta}\beta. 
\]
Summing from $i=1$ to $d(\gamma)$ gives
\[
\mathcal{A}_\beta(\gamma) = \int_{\phi^{d(\gamma)}_*\eta - \eta}\beta.
\]
Let $\zeta$ be a path on $\partial B$ which starts at $p$ and rotates by angle $2\pi d(\gamma)\theta_B$. Then by \eqref{eqn:betaboundary1} we have
\[
\mathcal{A}_\beta(\gamma) = \int_{\phi^{d(\gamma)}_*\eta - \eta+\zeta}\beta - d(\gamma)\theta_B.
\]
Since $\phi^{d(\gamma)}_*\eta - \eta + \zeta$ is a cycle in $\Sigma$, the integral on the right hand side is unchanged if we replace $\beta$ by $\beta'$ where $\beta'-\beta$ is exact.
\end{proof}

\begin{remark}
\label{rem:betaindependence}
If $\phi^*-1:H^1(\Sigma;\R)\circlearrowleft$ is an isomorphism, then any two valid choices of $\beta$ differ by an exact one-form. In this case it follows from Lemma~\ref{lem:exactdifference} that the Calabi invariant and the action do not depend on the choice of $\beta$.
\end{remark}

\begin{theorem}
\label{thm:calabi}
Let $\phi:(\Sigma,\omega)\circlearrowleft$ be a surface symplectomorphism satisfying conditions (i)--(iv) above for some $\theta_B\in\R$. Let $\beta$ be a primitive of $\omega$ such that $\phi^*\beta-\beta$ is exact and the boundary conditions \eqref{eqn:betaboundary1} and \eqref{eqn:betaboundary2} hold.  Suppose further that
\begin{equation}
\label{eqn:calabihyp}
\op{Cal}(\phi,\theta_B,\beta) < \min_{x\in\partial\Sigma}f_\beta(x).
\end{equation}
Then
\begin{equation}
\label{eqn:calabiinf}
\inf_{\gamma\in\mathcal{P}(\phi)}\frac{\mathcal{A}_\beta(\gamma)}{d(\gamma)} \le \op{Cal}(\phi,\theta_B,\beta).
\end{equation}
\end{theorem}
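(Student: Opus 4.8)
The plan is to deduce Theorem~\ref{thm:calabi} from Corollary~\ref{cor:main2} by realizing $\phi$ as (essentially) the return map of a Reeb flow on a closed contact three-manifold.

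\emph{Step 1: the contact manifold.} Starting from $(\phi,\omega,\beta,\theta_B)$ satisfying (i)--(iv), I would build a closed connected contact three-manifold $(Y,\lambda)$, a constant $c\in\R$ taken large (and which drops out at the end), and an admissible symplectic surface $u\colon\Sigma\to Y$ with $(Y,\lambda,u)$ nice, as follows. Form the mapping torus $Y_\phi=([0,1]\times\Sigma)/((1,x)\sim(0,\phi(x)))$; since $\op{Flux}(\phi)=0$ by (i), $Y_\phi$ carries a contact form $\lambda_0$ whose Reeb vector field is a reparametrization of the suspension vector field $\partial_s$, built from $\beta$ and the function $f_\beta$ of \eqref{eqn:fbeta} (the constant $c$ serving to make $\lambda_0$ contact); the page $\{0\}\times\Sigma$ together with $d\lambda_0=\omega$ on it recovers $(\Sigma,\omega)$. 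Using (iv) --- $\phi$ is a rigid rotation near each component $Z$ of $\partial\Sigma$ --- together with the prescribed form \eqref{eqn:betaboundary1}--\eqref{eqn:betaboundary2} of $\beta$ near $\partial\Sigma$ from Lemma~\ref{lem:primitive}, one caps off each boundary torus of $Y_\phi$ by a solid torus so that the Reeb vector field takes the normal form \eqref{eqn:modelReeb} near each new core circle $\gamma_Z$; possibly after a further modification near $\partial\Sigma$ (cf.\ \S\ref{sec:rnc}), this makes $(Y,\lambda,u)$ nice, with $u$ the capped page. Hypotheses (i)--(iv) are exactly what is needed: (i) gives $\beta$ and hence $f_\beta$; (ii) normalizes $\int_\Sigma\omega=1$; (iii) feeds into Lemma~\ref{lem:primitive}; and (iv) supplies both the niceness and the gluing data. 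This is the construction of \cite{calabi}; Examples~\ref{ex:ellipsoid2} and~\ref{ex:ellipsoid1} are instances of it, and \cite[\S1.4]{anchored} contains a closely related discussion.

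\emph{Step 2: the dictionary.} The simple Reeb orbits of $(Y,\lambda)$ are of two kinds: the core circles $\gamma_Z$, which make up $u(\partial\Sigma)$; and those meeting $\op{int}(\Sigma)$, which are in bijection with the periodic orbits $\gamma_\phi\in\mathcal{P}(\phi)$ contained in $\op{int}(\Sigma)$ --- such a periodic orbit being recovered as the (automatically distinct) intersection points of the Reeb orbit with the page. Under this bijection, if $\gamma$ corresponds to $\gamma_\phi$ then $\gamma\cdot\Sigma=d(\gamma_\phi)$ and $\mathcal{A}_\lambda(\gamma)=\mathcal{A}_\beta(\gamma_\phi)+c\,d(\gamma_\phi)$, so
\[
\frac{\gamma\cdot\Sigma}{\mathcal{A}_\lambda(\gamma)}=\frac{1}{\mathcal{A}_\beta(\gamma_\phi)/d(\gamma_\phi)+c}.
\]
Since $d\lambda$ restricts to $\omega$ on the page, $\op{Area}(\Sigma,d\lambda)=\int_\Sigma\omega=1$ by (ii), and computing $\int_Y\lambda\wedge d\lambda$ gives $\op{vol}(Y,\lambda)=\op{Cal}(\phi,\theta_B,\beta)+c$. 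Finally, by \eqref{eqn:fbeta} the function $f_\beta$ is constant on each component $Z$ of $\partial\Sigma$, with value $\theta_B$ on $B$; writing $f_\beta(Z)$ for this constant, one checks (e.g.\ by regarding $\gamma_Z$ as a limit of the boundary-parallel periodic orbits near $Z$, which have mean action $f_\beta(Z)$) that
\[
\frac{\gamma_Z\cdot\Sigma}{\mathcal{A}_\lambda(\gamma_Z)}=\frac{1}{f_\beta(Z)+c}.
\]
As $\mathcal{A}_\beta(\gamma_\phi)/d(\gamma_\phi)$ is a weighted average of values of $f_\beta$ on $\Sigma$ it is at least $\min_\Sigma f_\beta$, and likewise $f_\beta(Z)\ge\min_\Sigma f_\beta$; hence once $c>-\min_\Sigma f_\beta$ all of the denominators above (and $\op{Cal}(\phi,\theta_B,\beta)+c$) are positive.

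\emph{Step 3: applying Corollary~\ref{cor:main2}.} With these identifications, the hypothesis \eqref{eqn:calabihyp}, namely $\op{Cal}(\phi,\theta_B,\beta)<\min_{x\in\partial\Sigma}f_\beta(x)=\min_Z f_\beta(Z)$, is exactly the hypothesis \eqref{eqn:infhyp} of Corollary~\ref{cor:main2}: for each $Z$,
\[
\frac{\gamma_Z\cdot\Sigma}{\mathcal{A}_\lambda(\gamma_Z)}=\frac{1}{f_\beta(Z)+c}<\frac{1}{\op{Cal}(\phi,\theta_B,\beta)+c}=\frac{\op{Area}(\Sigma,d\lambda)}{\op{vol}(Y,\lambda)},
\]
the displayed inequality being equivalent to $f_\beta(Z)>\op{Cal}(\phi,\theta_B,\beta)$ since the denominators are positive. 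Corollary~\ref{cor:main2}, applied through the bijection of Step 2 and the inclusion of the interior periodic orbits into $\mathcal{P}(\phi)$, then gives
\[
\sup_{\gamma_\phi\in\mathcal{P}(\phi)}\frac{1}{\mathcal{A}_\beta(\gamma_\phi)/d(\gamma_\phi)+c}\ \ge\ \frac{1}{\op{Cal}(\phi,\theta_B,\beta)+c}.
\]
Since $x\mapsto(x+c)^{-1}$ is a decreasing bijection of $(-c,\infty)$ onto $(0,\infty)$ and every value $\mathcal{A}_\beta(\gamma_\phi)/d(\gamma_\phi)$ lies in $(-c,\infty)$, this is equivalent to
\[
\inf_{\gamma_\phi\in\mathcal{P}(\phi)}\left(\frac{\mathcal{A}_\beta(\gamma_\phi)}{d(\gamma_\phi)}+c\right)\ \le\ \op{Cal}(\phi,\theta_B,\beta)+c,
\]
and subtracting $c$ yields \eqref{eqn:calabiinf}.

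\emph{Main obstacle.} The work is in Step~1 and the structural identities of Step~2 --- producing the contact form on $Y_\phi$, carrying out the cappings and the near-boundary modification so that $(Y,\lambda,u)$ is genuinely nice, and verifying $\op{Area}(\Sigma,d\lambda)=1$, $\op{vol}(Y,\lambda)=\op{Cal}(\phi,\theta_B,\beta)+c$, $\mathcal{A}_\lambda(\gamma)=\mathcal{A}_\beta(\gamma_\phi)+c\,d(\gamma_\phi)$, and $\gamma_Z\cdot\Sigma/\mathcal{A}_\lambda(\gamma_Z)=1/(f_\beta(Z)+c)$. Once these are in hand, Steps~2 and~3 are bookkeeping together with the elementary manipulation of infima and suprema above.
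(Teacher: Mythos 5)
Your overall strategy is the paper's: build a closed contact three-manifold in which $\Sigma$ sits as a nice admissible symplectic page with $\op{Area}(\Sigma,d\lambda)=1$, $\op{vol}(Y,\lambda)=\op{Cal}(\phi,\theta_B,\beta)+c$, and Reeb orbits of action $\mathcal{A}_\beta(\gamma_\phi)+c\,d(\gamma_\phi)$, check that \eqref{eqn:calabihyp} translates into \eqref{eqn:infhyp} for the boundary orbits, and invoke Corollary~\ref{cor:main2}; your additive constant $c$ plays exactly the role of the paper's normalization of adding a large integer $n$ to $\theta_B$ so that $f_\beta>0$. The genuine gap is Step~1. You assert, attributing it to ``the construction of \cite{calabi}'', a version of that construction for a surface with \emph{several} boundary components: a contact form on the mapping torus capped by a solid torus along \emph{every} boundary torus, nice near every core circle, together with all the identities of Step~2 (area, volume, actions, and $\gamma_Z\cdot\Sigma/\mathcal{A}(\gamma_Z)=1/(f_\beta(Z)+c)$, the last justified only by a heuristic limit argument). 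But the cited result, \cite[Prop.\ 2.1]{calabi}, as the paper's footnote makes explicit, covers only the case of one boundary component (stated for a disk; the proof extends to arbitrary genus with one boundary). You never prove the multi-boundary construction, and that is where essentially all of the content of your argument lives. A related unaddressed point: your claimed bijection between simple Reeb orbits meeting $\op{int}(\Sigma)$ and $\mathcal{P}(\phi)$ is delicate when some rotation angle $\theta_Z$ is rational, since the capping solid tori can then contain families of periodic Reeb orbits other than their cores; these meet $\op{int}(\Sigma)$ but are not orbits of $\phi$, so you would have to either design the capping to control their ratios $\gamma'\cdot\Sigma/\mathcal{A}(\gamma')$ or otherwise exclude them before Step~3 goes through.

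The paper closes exactly this gap by a cheaper reduction that your proposal is missing. It first proves the theorem when $\partial\Sigma=B$, where \cite[Prop.\ 2.1]{calabi} applies verbatim (after the integer shift of $\theta_B$). For the general case it collapses every boundary component other than $B$ to a point: condition (iv) guarantees that $\phi$ descends to a smooth symplectomorphism $\widehat\phi$ of the capped surface, and the boundary normalization \eqref{eqn:betaboundary2} guarantees that $\beta$ descends, so the Calabi invariant and actions are unchanged. The new fixed points created by the collapse have mean action $f_\beta(Z)>\op{Cal}(\phi,\theta_B,\beta)$ by \eqref{eqn:calabihyp}, hence cannot realize the infimum, and the one-boundary case applied to $\widehat\phi$ yields \eqref{eqn:calabiinf} for $\phi$. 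This route also sidesteps the rational-$\theta_Z$ issue automatically, because the solid-torus neighborhoods of the collapsed points consist of honest points of $\Sigma$ near $\partial\Sigma$, where $\phi$ is a rigid rotation and $f_\beta\equiv f_\beta(Z)$. If you want to salvage your version, you should either carry out and verify the multi-boundary capping construction in detail (including niceness at each core and the dynamics inside each cap), or insert the collapsing reduction so that the construction you need is the one actually available in the literature.
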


One way to think about this result is that the mean action $\mathcal{A}_\beta(\gamma)/d(\gamma)$ is the average of $f_\beta$ over the periodic orbit $\gamma$, while the Calabi invariant $\op{Cal}(\phi,\theta_B,\beta)$ is the average of $f_\beta$ over the surface $\Sigma$. Thus \eqref{eqn:calabiinf} asserts that to some extent, the average of $f_\beta$ over the surface is seen by periodic orbits.

\begin{remark}
The special case of Theorem~\ref{thm:calabi} where $\Sigma$ is a disk was proved in \cite{calabi}. Weiler \cite{weiler} proved relations between mean action and the Calabi invariant on an annulus, including some cases to which Theorem~\ref{thm:calabi} does not apply; see \cite[\S1.2]{weiler}. Nelson-Weiler \cite[Thm.\ 1.16]{nw} proved a version of Theorem~\ref{thm:calabi} where $\Sigma$ has genus $(p-1)(q-1)/2$ and one boundary component, and $\phi$ is isotopic to the monodromy map associated to a $(p,q)$ torus knot. All of these proofs involved computing a filtration on embedded contact homology (ECH) defined using linking number with a transverse knot \cite{calabi}. Our proof of Theorem~\ref{thm:calabi} does not mention the knot filtration on ECH and thus seems to be independent of the above proofs.
\end{remark}

\begin{remark}
Pirnapasov \cite{pirnapasov} proved a generalization of the disk case of Theorem~\ref{thm:calabi}, in which $\phi$ is only required to fix the boundary and is not required to be a rigid rotation near the boundary, by modifying the map near the boundary to reduce to the rigid rotation case. It is also possible to extend Theorem~\ref{thm:calabi} in the same way, but we will omit this extension for brevity. Bramham-Pirnapasov \cite{bp} further extended the disk case to remove the hypothesis \eqref{eqn:calabihyp}, using Viterbo's spectral invariants \cite{viterbo}. We do not know how to remove this hypothesis for more general surfaces; this is a special case of Question~\ref{q:main}.
\end{remark}

\begin{remark}
A generic equidistribution result of Pirnapasov-Prasad \cite[Thm.\ 1.6]{pp} implies, similarly to Remark~\ref{rem:equi}, that a version of Theorem~\ref{thm:calabi} (in which $\phi$ is not assumed to be a rigid rotation near each boundary component) holds for $C^\infty$-generic $\phi$ satisfying the zero flux condition, without the hypothesis \eqref{eqn:calabihyp}. This is explained for the case of Hamiltonian diffeomorphisms in \cite[Thm.\ 1.12]{pp}. It is not possible to simultaneously remove the genericity hypothesis and remove the hypothesis \eqref{eqn:calabihyp}, as shown by the example of an irrational rotation of an annulus, which has no periodic orbits. 
\end{remark}

Similarly to \cite[Cor.\ 1.5]{calabi}, we also have a ``dual'' result:

\begin{corollary}
\label{cor:inverse}
Let $\phi:(\Sigma,\omega)\circlearrowleft$ be a surface symplectomorphism satisfying conditions (i)--(iv) above for some $\theta_B\in\R$. Let $\beta$ be a primitive of $\omega$ such that $\phi^*\beta-\beta$ is exact and the boundary conditions \eqref{eqn:betaboundary1} and \eqref{eqn:betaboundary2} hold.  Suppose further that
\[
\op{Cal}(\phi,\theta_B,\beta) > \max_{x\in\partial\Sigma}f_\beta(x).
\]
Then
\[
\sup_{\gamma\in\mathcal{P}(\phi)}\frac{\mathcal{A}_\beta(\gamma)}{d(\gamma)} \ge \op{Cal}(\phi,\theta_B,\beta).
\]
\end{corollary}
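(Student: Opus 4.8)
The plan is to deduce Corollary~\ref{cor:inverse} from Theorem~\ref{thm:calabi} by applying the latter to the inverse symplectomorphism $\phi^{-1}$ and then reversing all of the relevant signs.

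First I would verify that $\phi^{-1}$ again satisfies hypotheses (i)--(iv), with each boundary rotation number $\theta_Z$ (in particular $\theta_B$) replaced by $-\theta_Z$. Condition (i) holds because $(\phi^{-1})^*\beta-\beta = -(\phi^{-1})^*(\phi^*\beta-\beta)$ is exact whenever $\phi^*\beta-\beta$ is; condition (ii) is unchanged; condition (iii) holds because $\op{Ker}((\phi^{-1})^*-1) = \op{Ker}(\phi^*-1)$; and condition (iv) holds because in the same local coordinates near a boundary component $Z$ the map $\phi^{-1}$ is the rigid rotation by $-2\pi\theta_Z$. Moreover, the primitive $\beta$ supplied by Corollary~\ref{cor:inverse} also works for $\phi^{-1}$: we have just noted that $(\phi^{-1})^*\beta-\beta$ is exact, and the normalizations \eqref{eqn:betaboundary1} and \eqref{eqn:betaboundary2} are conditions on $\beta$ alone, stated in coordinates where the relevant map is a rigid rotation --- the same coordinates in either case. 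Hence Theorem~\ref{thm:calabi} applies to the triple $(\phi^{-1},-\theta_B,\beta)$.

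Next I would identify the function attached to $(\phi^{-1},-\theta_B,\beta)$ by \eqref{eqn:fbeta}. From $df_\beta = \phi^*\beta-\beta$ one computes $d(-f_\beta\circ\phi^{-1}) = (\phi^{-1})^*\beta-\beta$, and $-f_\beta\circ\phi^{-1}$ is identically $-\theta_B$ on $B$ since $\phi^{-1}(B)=B$ and $f_\beta|_B\equiv\theta_B$; so this function is $\tilde f_\beta := -f_\beta\circ\phi^{-1}$. Since $\phi$ is a symplectomorphism, $\int_\Sigma\tilde f_\beta\,\omega = -\int_\Sigma f_\beta\,\omega$, i.e.\ $\op{Cal}(\phi^{-1},-\theta_B,\beta) = -\op{Cal}(\phi,\theta_B,\beta)$. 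Because $f_\beta$ is constant on each component of $\partial\Sigma$ and $\phi^{-1}$ preserves each component, $\tilde f_\beta = -f_\beta$ on $\partial\Sigma$, so $\min_{x\in\partial\Sigma}\tilde f_\beta(x) = -\max_{x\in\partial\Sigma}f_\beta(x)$. Finally $\mathcal{P}(\phi^{-1}) = \mathcal{P}(\phi)$ (the same cycles of points with reversed cyclic order), $d(\gamma)$ is unchanged, and summing $\tilde f_\beta(x_i) = -f_\beta(\phi^{-1}(x_i)) = -f_\beta(x_{i-1})$ over $i$ shows that the action of $\gamma$ computed with respect to $\phi^{-1}$ is minus its action with respect to $\phi$.

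Assembling these, the hypothesis $\op{Cal}(\phi,\theta_B,\beta) > \max_{x\in\partial\Sigma}f_\beta(x)$ of Corollary~\ref{cor:inverse} is exactly the hypothesis \eqref{eqn:calabihyp} of Theorem~\ref{thm:calabi} for the triple $(\phi^{-1},-\theta_B,\beta)$, and the conclusion \eqref{eqn:calabiinf} of that theorem applied to $\phi^{-1}$ --- namely $\inf_{\gamma\in\mathcal{P}(\phi)}\mathcal{A}_\beta(\gamma)/d(\gamma) \le \op{Cal}(\phi^{-1},-\theta_B,\beta)$ with actions taken with respect to $\phi^{-1}$ --- becomes, after negating, $\sup_{\gamma\in\mathcal{P}(\phi)}\mathcal{A}_\beta(\gamma)/d(\gamma) \ge \op{Cal}(\phi,\theta_B,\beta)$ with actions taken with respect to $\phi$, which is the desired inequality. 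I do not expect a genuine obstacle here; the only delicate point is the sign/lift bookkeeping, and in particular checking that $-\theta_B$ is the correct lift to pair with $\phi^{-1}$ so that $\tilde f_\beta$ is normalized compatibly with the ``nice'' boundary coordinates --- which is precisely what makes Theorem~\ref{thm:calabi} directly applicable.
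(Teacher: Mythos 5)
Your proposal is correct and follows essentially the same route as the paper, whose one-line proof is exactly this substitution of $(\phi^{-1},-\theta_B)$ with the same primitive $\beta$. Your more careful bookkeeping even fixes a small imprecision in the paper's phrasing: the function attached to $(\phi^{-1},-\theta_B,\beta)$ is $-f_\beta\circ\phi^{-1}$ rather than literally $-f_\beta$, though, as you note, this changes none of the Calabi invariant, the boundary values, or the orbit actions.
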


\begin{proof}[Proof of Corollary~\ref{cor:inverse}, assuming Theorem~\ref{thm:calabi}]
Replace $(\phi,\theta_B)$ by $(\phi^{-1},-\theta_B)$, but keep the same primitive $\beta$. This replaces $f_\beta$ by $-f_\beta$.
\end{proof}

\begin{proof}[Proof of Theorem~\ref{thm:calabi}, assuming Corollary~\ref{cor:main2}.]
We proceed in two steps.

{\em Step 1.\/} We first prove the theorem in the special case when $\Sigma$ has only one boundary component, so that $\partial\Sigma=B$.

We can assume without loss of generality that $f_\beta$ is everywhere positive, by adding a large integer $n$ to $\theta_B$, as this will increase $f_\beta$, the Calabi invariant, and the mean action of periodic orbits all by $n$.

By\footnote{\cite[Prop.\ 2.1]{calabi} is stated for a disk, but the proof in \cite{calabi} also works for a surface of arbitrary genus. \cite[Prop.\ 2.1]{calabi} does not mention niceness, but this follows from the construction.} \cite[Prop.\ 2.1]{calabi}, there exists a closed connected three-manifold $Y$, and a contact form $\lambda$ on $Y$, such that:
\begin{itemize}
\item
There is a nice embedded admissible symplectic surface $u:\Sigma\to Y$ for which
\begin{equation}
\label{eqn:calabiarea}
\op{Area}(\Sigma,d\lambda)=1
\end{equation}
and
\begin{equation}
\label{eqn:calabirot}
\op{rot}_\Sigma(\partial\Sigma) = \theta_B^{-1}.
\end{equation}
\item
There is a bijection $\mathcal{P}(\lambda) = \{\partial\Sigma\}\cup\mathcal{P}(\phi)$, such that if $\gamma\in\mathcal{P}(\phi)$, then the corresponding simple Reeb orbit of $\lambda$ has $d(\gamma)$ intersections with $\Sigma$ and symplectic action $\mathcal{A}_\beta(\gamma)$.
\item
We have
\begin{equation}
\label{eqn:calabivol}
\op{vol}(Y,\lambda) = \op{Cal}(\phi,\theta_B,\beta).
\end{equation}
\end{itemize}

It follows from the hypothesis \eqref{eqn:calabihyp} and equations \eqref{eqn:calabiarea}--\eqref{eqn:calabivol} that $(Y,\lambda,u)$ satisfies the hypothesis \eqref{eqn:infhyp} of Corollary~\ref{cor:main2}. By \eqref{eqn:calabiarea}, \eqref{eqn:calabivol}, and the second bullet point above, the conclusion \eqref{eqn:infbound} of Corollary~\ref{cor:main2} gives the desired inequality \eqref{eqn:calabiinf}.

{\em Step 2.\/} We now prove the theorem in the general case. To do so, we form a new symplectic surface $(\widehat{\Sigma},\widehat{\omega})$ with one boundary component by collapsing each component other than $B$ to a point. Since we assumed that $\phi$ is a rigid rotation near each boundary component, it follows that $\phi$ descends to a smooth symplectomorphism $\widehat{\phi}$ of $(\widehat{\Sigma},\widehat{\omega})$. The periodic orbits of $\widehat{\phi}$ consist of the periodic orbits of $\phi$ not contained in boundary components other than $B$, together with fixed points $z_1,\ldots,z_k$ arising from the collapsed boundary components.

It follows from the boundary condition \eqref{eqn:betaboundary2} that $\beta$ descends to a primitive $\widehat{\beta}$ of $\widehat{\omega}$. Then $f_\beta$ is the pullback of $f_{\widehat{\beta}}$ under the quotient map $\Sigma\to\widehat{\Sigma}$. In particular,
\begin{equation}
\label{eqn:caldescend}
\op{Cal}(\widehat{\phi},\theta_B,\widehat{\beta}) = \op{Cal}(\phi,\theta_B,\beta),
\end{equation}
and
\begin{equation}
\label{eqn:actiondescend}
\mathcal{A}_{\widehat{\beta}}(\gamma) = \mathcal{A}_\beta(\gamma)
\end{equation}
when $\gamma$ is a periodic orbit of $\widehat{\phi}$ which is not one of the fixed points $z_1,\ldots,z_k$.

By the hypothesis \eqref{eqn:calabihyp}, the special case of the theorem proved in Step 1 is applicable and gives
\begin{equation}
\label{eqn:nfri}
\inf_{\gamma\in\mathcal{P}(\widehat{\phi})}\frac{\mathcal{A}_{\widehat{\beta}}(\gamma)}{d(\gamma)} \le \op{Cal}(\widehat{\phi},\theta_B,\widehat{\beta}).
\end{equation}
By the hypothesis \eqref{eqn:calabihyp}, none of the fixed points $z_1,\ldots,z_k$ of $\widehat{\phi}$ realizes the infimum in \eqref{eqn:nfri}. Thus \eqref{eqn:caldescend}, \eqref{eqn:actiondescend}, and \eqref{eqn:nfri} imply \eqref{eqn:calabiinf}.
\end{proof}

\section{Spectral invariants}
\label{sec:spectral}

We now prepare for the proof of Theorem~\ref{thm:main1}. We will need two different families of ``spectral invariants'': the ``alternative ECH capacities'' \cite{altech} of four-dimensional symplectic manifolds, which we denote by $c_k^{\op{Alt}}$, and the ``elementary spectral invariants'' \cite{altspec} of contact three-manifolds, which we denote by $c_k$.
We now review and develop the facts about these invariants that we will need.

\subsection{Alternative ECH capacities}
\label{sec:altech}

Let $(X,\omega)$ be a symplectic four-manifold. The ``alternative ECH capacities'' defined in \cite{altech}, inspired by \cite{ms}, are a sequence of real numbers (or infinity) which we denote here by
\[
0 = c_0^{\op{Alt}}(X,\omega) < c_1^{\op{Alt}}(X,\omega) \le c_2^{\op{Alt}}(X,\omega) \le \cdots \le +\infty. 
\]
To review the definition, suppose first that $(X,\omega)$ is ``admissible'', meaning that each component of $(X,\omega)$ is either closed, or a compact Liouville domain such that the contact form on the boundary is nondegenerate. Then
\begin{equation}
\label{eqn:altech1}
c_k^{\op{Alt}}(X,\omega) = \sup_{\substack{J\in\mathcal{J}(\overline{X}) \\ x_1,\ldots,x_k\in X\mbox{\scriptsize distinct}}} \inf_{u\in\mathcal{M}^J(\overline{X};x_1,\ldots,x_k)}\mathcal{E}(u)\in[0,\infty].
\end{equation}
Here $\overline{X}$ denotes the ``symplectic completion'' of $X$ defined by attaching symplectization ends to each Liouville component, and $\mathcal{J}(\overline{X})$ denotes the set of ``cobordism-compatible'' almost complex structures on $\overline{X}$. Also $\mathcal{M}^J(\overline{X};x_1,\ldots,x_k)$ denotes the moduli space of possibly disconnected $J$-holomorphic curves in $\overline{X}$, with punctures asymptotic to Reeb orbits in the symplectization ends, passing through the points $x_1,\ldots,x_k$. Finally $\mathcal{E}(u)$ denotes the sum over the closed components of $u$ of the symplectic area, plus the sum over the punctures of the period of the corresponding Reeb orbit. For a general symplectic four-manifold $(X',\omega')$, we define
\begin{equation}
\label{eqn:altech2}
c_k^{\op{Alt}}(X',\omega') = \sup\{c_k(X,\omega)\},
\end{equation}
where the supremum is over admissible $(X,\omega)$ for which there exists a symplectic embedding $(X,\omega)\hookrightarrow (X',\omega')$. See \cite{altech} for details.

We will need the following properties of the capacities $c_k^{\op{Alt}}$ proved in \cite[Thm.\ 9]{altech}. See \cite{altech} for more properties and computations.

\begin{lemma}
\label{lem:altechproperties}
The alternative ECH capacities $c_k^{\op{Alt}}$ have the following properties:
\begin{itemize}
\item (Monotonicity) 
If there exists a symplectic embedding $(X,\omega)\hookrightarrow (X',\omega')$, then 
\begin{equation}
\label{eqn:altechmonotonicity}
c_k^{\op{Alt}}(X,\omega) \le c_k^{\op{Alt}}(X',\omega')
\end{equation}
for all $k$.
\item (Disjoint Union)
\[
c_k\left(\coprod_{i=1}^m(X_i,\omega_i)\right) = \max_{k_1+\cdots+k_m=k}\sum_{i=1}^m c_{k_i}(X_i,\omega_i).
\]
\item{(Weyl law)}
If $U\subset\R^4$ is a bounded open set with piecewise smooth boundary, and if $\omega_0$ denotes the standard symplectic form on $\R^4$, then
\begin{equation}
\label{eqn:wlr}
c_k^{\op{Alt}}(U,\omega_0) = 2\op{vol}(U)^{1/2}k^{1/2} + O(k^{1/4}).
\end{equation}
\end{itemize}
\end{lemma}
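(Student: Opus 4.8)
The plan is to prove the three properties in increasing order of difficulty, treating the Weyl law as the real content. I would start with \textbf{Monotonicity}. For arbitrary symplectic four-manifolds the inequality \eqref{eqn:altechmonotonicity} is immediate from the definition \eqref{eqn:altech2}: composing the given embedding $(X,\omega)\hookrightarrow(X',\omega')$ with a symplectic embedding of an admissible domain into $(X,\omega)$ produces a symplectic embedding of that domain into $(X',\omega')$, so the supremum defining $c_k^{\op{Alt}}(X',\omega')$ ranges over a collection containing the one defining $c_k^{\op{Alt}}(X,\omega)$. The substantive point is that for admissible $(X,\omega)$ the two definitions \eqref{eqn:altech1} and \eqref{eqn:altech2} agree, equivalently that \eqref{eqn:altech1} is itself monotone under symplectic embeddings of admissible domains. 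I would prove this by neck stretching: choose $J\in\mathcal{J}(\overline{X'})$ making the image of $\overline{X}$ holomorphic, stretch the neck along $\partial X$, and apply SFT compactness to a sequence of curves through fixed points $x_1,\dots,x_k\in X$; the limiting holomorphic building has a top level lying in $\overline{X}$ and still passing through the $x_i$, and its energy $\mathcal{E}$ is no larger than that of the original curves because each level carries nonnegative energy and $\mathcal{E}$ is additive across levels.

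Next I would handle \textbf{Disjoint Union}, which is essentially formal. A $J$-holomorphic curve in $\coprod_i\overline{X_i}$ is precisely a disjoint union of $J$-holomorphic curves $u_i$ in the $\overline{X_i}$, with $\mathcal{E}\big(\coprod_i u_i\big)=\sum_i\mathcal{E}(u_i)$. Fixing an almost complex structure and $k$ distinct points with $k_i$ of them in $X_i$ (so $\sum_i k_i=k$), the infimum of $\mathcal{E}$ over curves through all of them is the sum of the infima over the components; taking the supremum over $J$ and over the placement of the points, which may be allocated to the components in any way with $\sum_i k_i=k$, gives the inequality $\ge$ in the formula, and restricting a near-optimal configuration on the union to each component gives $\le$.

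For the \textbf{Weyl law} I would first treat a single round ball $B(a)\subset\R^4$. Here $c_k^{\op{Alt}}(B(a))$ can be computed by an elementary argument with holomorphic curves of degree $1,2,3,\dots$ through points in $\C\Proj^2$, giving $c_k^{\op{Alt}}(B(a))=a\,d_k$, where $d_k$ is the nonnegative integer characterized by $\binom{d_k+1}{2}\le k<\binom{d_k+2}{2}$; thus $d_k=\sqrt{2k}+O(1)$ with an absolute constant, so $c_k^{\op{Alt}}(B(a))=2\op{vol}(B(a))^{1/2}k^{1/2}+O(a)$. Feeding this into the Disjoint Union property, together with the elementary optimization $\max\{\sum_i\sqrt{v_i k_i}:\sum_i k_i=k\}=\big(\sum_i v_i\big)^{1/2}k^{1/2}$, yields \eqref{eqn:wlr} for any finite disjoint union of balls, with error controlled by the capacities of the balls used. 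For a general bounded open $U$ with piecewise smooth boundary I would squeeze $U$ between finite disjoint unions of balls $P_-\subset U\subset P_+$ with $\op{vol}(P_\pm)\to\op{vol}(U)$, use Monotonicity to sandwich $c_k^{\op{Alt}}(U,\omega_0)$ between $c_k^{\op{Alt}}(P_-)$ and $c_k^{\op{Alt}}(P_+)$, and pass to the limit to obtain the leading term $2\op{vol}(U)^{1/2}k^{1/2}$.

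The hard part will be upgrading this squeeze to the stated error rate $O(k^{1/4})$. A single scale of balls does not suffice: if one approximates $U$ well in volume by balls of a common radius $r$, the accumulated combinatorial defect $\sum_i\big(c_k^{\op{Alt}}(B(a_i))-2\op{vol}(B(a_i))^{1/2}k^{1/2}\big)=O\big(\sum_i a_i\big)$ is of the wrong order once $r$ is forced small enough. Instead I would use a multiscale, Whitney-type family of balls, with only boundedly many large balls carrying most of the volume and most of the constraint points, and with the small balls near $\partial U$ having small total volume; a Cauchy--Schwarz estimate then bounds $\sum_i a_i$ suitably, and balancing the smallest scale against $k$ together with the sharp estimate of $d_k-\sqrt{2k}$ produces the $O(k^{1/4})$ bound. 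This quantitative analysis is the real work; it is carried out in \cite[Thm.\ 9]{altech}, which I would cite for the final error term.
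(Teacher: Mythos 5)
Your outline is sound, and it ends where the paper itself begins: the paper gives no proof of this lemma at all, simply quoting it from \cite[Thm.\ 9]{altech}, the same reference you invoke for the $O(k^{1/4})$ error, and your sketches of monotonicity (neck stretching plus the formal argument from the definition \eqref{eqn:altech2}), disjoint union (formal splitting of curves and constraint points), and the Weyl law (exact ball computation, disjoint-union optimization, multiscale ball packing near $\partial U$) follow the lines of the cited proof. The only substantive point the paper adds is Remark~\ref{rem:altechsubleading}: the Weyl law in \cite{altech} is stated for smooth boundary, while the lemma here allows piecewise smooth boundary, and the paper notes that the argument goes through under the collar-volume condition \eqref{eqn:smoothboundary} with the explicit constants \eqref{eqn:explicitconstants}---a point your multiscale discussion of small balls near $\partial U$ implicitly covers but which you should state explicitly if you cite \cite[Thm.\ 9]{altech} verbatim.
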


\begin{remark}
\label{rem:altechsubleading}
The statement of the Weyl law \eqref{eqn:wlr} in \cite{altech} assumes smooth boundary, but the proof works for piecewise smooth boundary also. In fact, the argument shows the following: Suppose there exists a constant $C$ such that for all $\epsilon>0$ we have
\begin{equation}
\label{eqn:smoothboundary}
\op{vol}\left\{x\in U \mid \op{dist}(x,\partial U) < \epsilon\right\} < C\epsilon.
\end{equation}
Then for all $k$ we have
\begin{equation}
\label{eqn:explicitconstants}
\left|c_k^{\op{Alt}}(U) - 2\op{vol}(U)^{1/2}k^{1/2}\right| \le 8\sqrt{2} C\left(\op{vol}(U)^{1/4} + \op{vol}(U)^{-1/4}\right) k^{1/4}.
\end{equation}
See \cite{cgh} for a version of this for domains with rough boundary. Work in progress by Edtmair \cite{oliverip} shows that in the case of smooth boundary, the $O(k^{1/4})$ error in \eqref{eqn:wlr} can be improved to $O(1)$. It is further conjectured in \cite{ruelle} that generically the $O(1)$ term limits to $-1/2$ times the Ruelle invariant.
\end{remark}

We now prove a version of the Weyl law in a symplectization. If $Y$ is a closed three-manifold and $\lambda$ is a contact form on $Y$, recall that the {\bf symplectization\/} of $(Y,\lambda)$ is the symplectic four-manifold
\[
(\R\times Y, d(e^s\lambda))
\]
where $s$ denotes the $\R$ coordinate. If $f:Y\to[0,\infty)$ is a smooth function, define
\begin{equation}
\label{eqn:mf}
M_f = \{(s,y)\in\R\times Y \mid 0 < s < f(y)\}
\end{equation}
with the restriction of the symplectic form $d(e^s\lambda)$. Our convention is that the volume of a symplectic four-manifold $(M^4,\omega)$ is given by $\frac{1}{2}\int_M\omega\wedge\omega$, which for domains in $\R^4$ with the restriction of the standard symplectic form agrees with Euclidean volume. With this convention, in the symplectization we have
\begin{equation}
\label{eqn:volmf}
\op{vol}(M_f) = \frac{1}{2}\int_{M_f}d(e^s\lambda)\wedge d(e^s\lambda) = \frac{1}{2}\int_Y\left(e^{2f}-1\right)\lambda\wedge d\lambda.
\end{equation}

\begin{lemma}
\label{lem:sss}
Let $Y$ be a closed three-manifold, let $\lambda$ be a contact form on $Y$, and let $f:Y\to[0,\infty)$ be a smooth function. Then
\[
\liminf_{k\to\infty}\frac{c_k^{\op{Alt}}(M_f)^2}{k} \ge 4\op{vol}(M_f).
\]
\end{lemma}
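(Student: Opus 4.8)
The plan is to exhaust $M_f$ from the inside by honest admissible domains and then invoke the flat Weyl law \eqref{eqn:wlr} together with monotonicity. The point is that $c_k^{\op{Alt}}$ is only defined via interior symplectic embeddings (see \eqref{eqn:altech2}), so I never need to know the precise value of $c_k^{\op{Alt}}(M_f)$; I only need good lower bounds, and lower bounds come from embedded subdomains. First I would cover $Y$ by finitely many Darboux-type charts in which $\lambda$ becomes the standard contact form, so that a small piece of the symplectization $\R\times Y$ with the symplectic form $d(e^s\lambda)$ is symplectomorphic to an open subset of $(\R^4,\omega_0)$. Concretely, near any point of $Y$ one can choose contact coordinates making $\lambda$ standard, and then the symplectization of a standard contact ball embeds symplectically into $\R^4$; this is where the hypothesis that $\lambda$ is an arbitrary (not necessarily nondegenerate) contact form causes no trouble, since we are not asking for anything on the boundary.

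Next I would fix a large integer $m$ and choose finitely many disjoint open sets $U_1,\dots,U_m \subset \R^4$, each symplectomorphic to a subdomain of $M_f$, whose total volume is close to $\op{vol}(M_f)$. The existence of such a packing follows by subdividing $Y$ finely, using the Darboux charts above, and filling up most of the "fibered" region $\{0<s<f(y)\}$; the only volume lost is near the chart boundaries and near $\{s=0\}$ and $\{s=f(y)\}$, which can be made arbitrarily small. By the Disjoint Union property and Monotonicity in Lemma~\ref{lem:altechproperties}, together with the embedding $\coprod_i U_i \hookrightarrow M_f$,
\[
c_k^{\op{Alt}}(M_f) \ge c_k^{\op{Alt}}\!\left(\coprod_{i=1}^m U_i,\omega_0\right) = \max_{k_1+\cdots+k_m=k}\sum_{i=1}^m c_{k_i}^{\op{Alt}}(U_i,\omega_0).
\]
Now I would distribute the points: take $k_i \approx k\cdot \op{vol}(U_i)/\sum_j\op{vol}(U_j)$, apply the flat Weyl law \eqref{eqn:wlr} to each $U_i$, and use the superadditivity of $t\mapsto t^{1/2}$ — more precisely, that $\sum_i \sqrt{v_i}\sqrt{k_i}$ is maximized, to leading order in $k$, precisely by the proportional choice of $k_i$, giving $\sum_i \sqrt{v_i k_i} \sim \sqrt{k\sum_i v_i}$. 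Hence
\[
\liminf_{k\to\infty}\frac{c_k^{\op{Alt}}(M_f)^2}{k} \ge 4\sum_{i=1}^m \op{vol}(U_i).
\]
Letting $m\to\infty$ so that $\sum_i \op{vol}(U_i)\to\op{vol}(M_f)$ (using \eqref{eqn:volmf}) yields the claim.

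I expect the main obstacle to be the packing step: producing, for each $\eta>0$, a disjoint union of standard-symplectic subdomains inside $M_f$ of total volume at least $\op{vol}(M_f)-\eta$. One has to be careful that the Darboux charts on $Y$ can be chosen so that the induced symplectic charts on slabs of the symplectization are genuinely volume-nearly-exhausting, and that the function $f$ (merely smooth and nonnegative, possibly with zeros) does not obstruct this — near $\{f=0\}$ the region $M_f$ is thin but contributes negligible volume, so it can simply be discarded. A clean way to organize this is to first approximate $f$ from below by a smooth function $f'$ with $f'\le f$, $f'>0$ only on an open set, and $\op{vol}(M_{f'})$ close to $\op{vol}(M_f)$, and then pack $M_{f'}$, which has compact closure in the region where the fiber coordinate is bounded away from $0$. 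The superadditivity-of-square-root bookkeeping and the chart constructions are routine once this is set up, so I would not belabor them.
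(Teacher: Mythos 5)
Your proposal is correct and follows essentially the same route as the paper: Darboux charts making $\lambda$ standard, a disjoint-union packing of $M_f$ by domains symplectomorphic to subsets of $(\R^4,\omega_0)$, proportional distribution of the $k_i$, and the flat Weyl law \eqref{eqn:wlr}. The only cosmetic difference is that the paper covers $Y$ up to a measure-zero set (so no $\eta$-loss or limit over packings is needed), whereas you exhaust the volume by an $\eta$-approximation and pass to the limit, which is an equally valid bookkeeping choice.
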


\begin{proof}
Consider the standard Liouville form $\lambda_0$ on $\R^4$ defined by
\eqref{eqn:standardLiouvilleform}. Recall that $\lambda_0$ restricts to a contact form on $S^3$. Using Darboux's theorem for contact forms \cite[Thm.\ 2.5.1]{geiges}, we can find finitely many disjoint open sets $V_1,\ldots,V_m\subset Y$ such that:
\begin{itemize}
\item
$\partial V_i$ is piecewise smooth.
\item
$Y\setminus\coprod_{i=1}^m V_i$ has measure zero.
\item
There is an open set $U^i\subset S^3$ and a strict contactomorphism
\[
\phi_i: (U^i,{\lambda_0}|_{U^i})  \stackrel{\simeq}{\longrightarrow} (V_i,\lambda|_{V_i}).
\]
\end{itemize}
Write $f_i=f\circ\phi_i:U^i\to[0,\infty)$, and define
\[
U^i_{f_i} = \{(s,y)\in\R\times U^i \mid 0 < s < f_i(y)\}
\]
with the restriction of the symplectization symplectic form. Note that $U^i_{f_i}$ is symplectomorphic to an open set in $\R^4$. By the Weyl law \eqref{eqn:wlr}, there is a constant $C'$ such that for all $i\in\{1,\ldots,m\}$ and nonnegative integers $k_i$ we have
\begin{equation}
\label{eqn:usewlr}
c_{k_i}^{\op{Alt}}(U^i_{f_i}) \ge 2\op{vol}(U^i_{f_i})^{1/2}{k_i}^{1/2} - C'{k_i}^{1/4}.
\end{equation}

Fix a nonnegative integer $k$. Since
\[
\op{vol}(M_{ f}) = \sum_{i=1}^m\op{vol}(U^i_{f_i}),
\]
we can choose nonnegative integers $k_1,\ldots,k_m$ with
\begin{equation}
\label{eqn:sumki}
\sum_{i=1}^m k_i=k
\end{equation}
and
\begin{equation}
\label{eqn:chooseki}
k_i = \frac{\op{vol}(U^i_{ f_i})}{\op{vol}(M_{f})}k + O(1).
\end{equation}
Then by Lemma~\ref{lem:altechproperties} and equations \eqref{eqn:usewlr}--\eqref{eqn:chooseki}, we have
\[
\begin{split}
c_k^{\op{Alt}}(M_{f}) &\ge \sum_{i=1}^m c_{k_i}^{\op{Alt}}(U^i_{f_i})
\\
& \ge
\sum_{i=1}^m\left(2\op{vol}(U^i_{f_i})^{1/2}{k_i}^{1/2} - C'{k_i}^{1/4}\right)\\
&= 
2\op{vol}(M_{f})^{1/2}k^{1/2} + O(k^{1/4}).
\end{split}
\]
\end{proof}

\begin{remark}
Although we do not need this, the reverse inequality
\[
\limsup_{k\to\infty}\frac{c_k^{\op{Alt}}(M_f)^2}{k} \le 4\op{vol}(M_f)
\]
also holds, by the last two properties of the elementary spectral invariants in Proposition~\ref{prop:altspecproperties} below.
\end{remark}


\subsection{Elementary spectral invariants}
\label{sec:altspec}

Let $Y$ be a closed three-manifold and let $\lambda$ be a contact form on $Y$. The ``elementary spectral invariants'' defined in \cite{altspec} are a sequence of real numbers which we denote here by
\[
0 = c_0(Y,\lambda) < c_1(Y,\lambda) \le c_2(Y,\lambda) \le \cdots.
\]
These numbers are defined by a variant of \eqref{eqn:altech1} and \eqref{eqn:altech2} as follows. Suppose first that $\lambda$ is nondegenerate. Then
\[
c_k(Y,\lambda) = \sup_{R>0}\sup_{\substack{J\in\mathcal{J}_R(\R\times Y) \\ x_1,\ldots,x_k\in [-R,0]\times Y\mbox{\scriptsize distinct}}} \inf_{u\in\mathcal{M}^J(\R\times Y;x_1,\ldots,x_k)}\mathcal{E}_+(u)\in[0,\infty].
\]
Here $\mathcal{J}_R(\R\times Y)$ denotes the set of almost complex structures on $\R\times Y$ that are ``cobordism compatible'' where $\R\times Y$ is regarded as the symplectic completion of the cobordism $[-R,0]\times Y$ between $(Y,e^{-R}\lambda)$ and $(Y,\lambda)$. Also $\mathcal{M}^J(\R\times Y;x_1,\ldots,x_k)$ denotes the moduli space of possibly disconnected $J$-holomorphic curves in $\R\times Y$, with positive punctures asymptotic to Reeb orbits as the $\R$ coordinate goes to $+\infty$, and negative punctures asymptotic to Reeb orbits as the $\R$ coordinate goes to $-\infty$, passing through the points $x_1,\ldots,x_k$. Finally $\mathcal{E}_+(u)$ denotes the sum over the positive punctures of the period of the corresponding Reeb orbit.
For a general contact form $\lambda$, we define
\[
c_k(Y,\lambda) = \sup_{f:\R\to Y^{<0}}c_k(Y,e^f\lambda) = \inf_{f:Y\to\R^{>0}}c_k(Y,e^f\lambda),
\]
where in the supremum and infimum we require that $e^f\lambda$ is nondegenerate. See \cite{altspec} for details.

We will need the following properties of the elementary spectral invariants proved in \cite{altspec}. To state them, define an {\bf orbit set\/} to be a finite set of pairs $\alpha = \{(\alpha_i,m_i)\}$ where the $\alpha_i$ are distinct simple Reeb orbits, and the $m_i$ are positive integers. Define the {\bf symplectic action\/}
\[
\mathcal{A}(\alpha) = \sum_im_i\mathcal{A}(\alpha_i)\in [0,\infty).
\]
Also, generalizing \eqref{eqn:mf}, if $f_1,f_2:Y\to\R$ are smooth functions with $f_1 \le f_2$, define
\[
M_{f_1,f_2} = \{(s,y)\in\R\times Y \mid f_1(y) < s < f_2(y)\}
\]
with the restriction of the symplectization symplectic form.

\begin{proposition}
\label{prop:altspecproperties}
Let $Y$ be a closed three-manifold and let $\lambda$ be a contact form on $Y$. The elementary spectral invariants $c_k(Y,\lambda)$ have the following properties:
\begin{description}
\item{(Conformality)}
If $r>0$ then $c_k(Y,r\lambda) = rc_k(Y,\lambda)$.
\item {($C^0$-Continuity)} For a fixed nonnegative integer $k$, the map $C^\infty(Y;\R)\to\R$ sending $f\mapsto c_k(Y,e^f\lambda)$ is $C^0$-continuous. 
\item {(Spectrality)}
For each nonnegative integer $k$, there exists an orbit set $\alpha$ such that $c_k(Y,\lambda) = \mathcal{A}(\alpha)$.
\item{(Weyl law)}
We have
\begin{equation}
\label{eqn:altspecweyl}
\lim_{k\to\infty}\frac{c_k(Y,\lambda)^2}{k} = 2\op{vol}(Y,\lambda).
\end{equation}
\item {(Capacity Bound)}
Let $f_1,f_2:Y\to\R$ be smooth functions with $f_1\le f_2$. Then for any nonnegative integers $k$ and $l$ we have
\begin{equation}
\label{eqn:capacitybound}
c_{k+l}(Y,e^{f_2}\lambda) \ge c_k(Y,e^{f_1}\lambda) + c_{l}^{\op{Alt}}(M_{f_1,f_2}).
\end{equation}
\end{description}
\end{proposition}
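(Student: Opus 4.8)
The plan is to obtain Conformality directly from the definition and the remaining four properties from \cite{altspec}, spelling out the Capacity Bound via a neck-stretching argument.

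\emph{Conformality} is immediate: for $r>0$, rescaling $\lambda\mapsto r\lambda$ multiplies every Reeb period by $r$, rescales the symplectization coordinate, and carries cobordism-compatible almost complex structures and holomorphic curves to cobordism-compatible almost complex structures and holomorphic curves while multiplying $\mathcal{E}_+$ by $r$; since $c_k$ is a supremum of infima of $\mathcal{E}_+$, it scales by $r$, and this survives the passage to the degenerate case via the defining sup/inf over conformal factors. \emph{$C^0$-continuity}, \emph{Spectrality}, and the \emph{Weyl law} are the properties established in \cite{altspec}: $C^0$-continuity follows from monotonicity of $f\mapsto c_k(Y,e^f\lambda)$ (itself the $l=0$ case of the Capacity Bound, since $c^{\op{Alt}}_0=0$) together with Conformality; Spectrality follows from SFT compactness applied to a sequence of curves approaching the infimum in the definition, whose positive asymptotics converge to an orbit set of action $c_k(Y,\lambda)$; and the Weyl law is the main theorem of \cite{altspec}, proved by sandwiching $c_k(Y,\lambda)$ between quantities computed in Darboux charts and applying the Euclidean Weyl law \eqref{eqn:wlr}. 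I would simply cite these three results.

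For the \emph{Capacity Bound}, by $C^0$-continuity I reduce to the case that $e^{f_1}\lambda$ and $e^{f_2}\lambda$ are nondegenerate, and by \eqref{eqn:altech2} I fix $\epsilon>0$ and replace $M_{f_1,f_2}$ by an admissible $(X,\omega)$ that embeds symplectically into $M_{f_1,f_2}$ with $c_l^{\op{Alt}}(X,\omega)>c_l^{\op{Alt}}(M_{f_1,f_2})-\epsilon$. Identifying $(\R\times Y,d(e^s e^{f_2}\lambda))$ with $(\R\times Y,d(e^\sigma\lambda))$ via $(s,y)\mapsto(s+f_2(y),y)$, the cobordism $[-R,0]\times Y$ computing $c_{k+l}(Y,e^{f_2}\lambda)$ becomes $M_{f_2-R,f_2}$, which for $R$ large contains at its bottom a sub-cobordism computing $c_k(Y,e^{f_1}\lambda)$ and, above the hypersurface $\{\sigma=f_1(y)\}$, the region $M_{f_1,f_2}\supset X$. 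I then build a cobordism-compatible $J$ on the completion that is near-optimal for $c_k(Y,e^{f_1}\lambda)$ near the bottom and, after inserting a long symplectization neck along $\partial X$, near-optimal on $\overline{X}$ for $c_l^{\op{Alt}}(X,\omega)$, and place $k$ marked points deep in the bottom region and $l$ marked points in $X$. For any $J$-holomorphic $u$ through all $k+l$ points, its intersection with $\overline{X}$ passes through the $l$ interior points, so $\mathcal{E}(u\cap\overline{X})\ge c_l^{\op{Alt}}(X,\omega)-\epsilon$; its portion below $\{\sigma=f_1\}$ passes through the $k$ bottom points, so the total period of the $e^{f_1}\lambda$-orbits along which $u$ crosses $\{\sigma=f_1\}$ is at least $c_k(Y,e^{f_1}\lambda)-\epsilon$; and stacking the inequalities ``period of positive asymptotics $\ge$ period of negative asymptotics $+$ enclosed symplectic area $\ge$ period of negative asymptotics'' up through the $\partial X$ neck and then to the top yields $\mathcal{E}_+(u)\ge c_k(Y,e^{f_1}\lambda)+c_l^{\op{Alt}}(M_{f_1,f_2})-3\epsilon$. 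Taking the infimum over $u$, the supremum over $(J,\text{marked points})$, and then $\epsilon\to0$ gives \eqref{eqn:capacitybound}.

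The main obstacle is the construction of a single cobordism-compatible $J$ realizing near-optimal data for both $c_k(Y,e^{f_1}\lambda)$ and $c_l^{\op{Alt}}(X,\omega)$, the neck-stretching along $\partial X$, and the SFT compactness needed to split $u$ cleanly into its pieces below $\{\sigma=f_1\}$, along the neck, and inside $\overline{X}$, together with the bookkeeping of the action filtration across these levels; once the decomposition is in place the inequality itself is a routine consequence of $d(e^\sigma\lambda)\ge 0$ on $J$-holomorphic curves and Stokes' theorem. If this additivity is already recorded in \cite{altspec}, as seems plausible, the proof reduces to a citation.
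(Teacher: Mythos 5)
Your proposal matches the paper's proof in structure: Conformality, $C^0$-Continuity, Spectrality, and the Weyl law are quoted from \cite{altspec} (Thms.\ 1.14 and 1.19), and the Capacity Bound is reduced, via $C^0$-continuity and the definition \eqref{eqn:altech2} of $c_l^{\op{Alt}}(M_{f_1,f_2})$, to the inequality $c_{k+l}(Y,e^{f_2}\lambda)\ge c_k(Y,e^{f_1}\lambda)+c_l^{\op{Alt}}(X,\omega)$ for a compact Liouville domain $(X,\omega)$ symplectically embedded in $M_{f_1,f_2}$ with nondegenerate forms, which is exactly \cite[Lem.\ 4.4]{altspec}, so your neck-stretching argument is an outline of a result the paper simply cites. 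The only cosmetic difference is that the paper treats the non-strict case $f_1\le f_2$ at the end by combining $C^0$-continuity of $c_k$ and $c_{k+l}$ with Monotonicity of $c_l^{\op{Alt}}$, rather than folding it into the nondegeneracy reduction.
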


\begin{remark}
\label{rem:altspecmonotone}
As a special case of the Capacity Bound property, we have the following Monotonicity property: If $f:Y\to[0,\infty)$, then
\[
c_k(Y,\lambda) \le c_k(Y,e^f\lambda).
\]
\end{remark}

\begin{remark}
\label{rem:altspecsubleading}
The Weyl law can be refined to
\[
O(k^{1/4}) \le c_k(Y,\lambda) - \sqrt{2}(\op{vol}(Y,\lambda))^{1/2}k^{1/2} \le O(k^{2/5}).
\]
Here the left inequality follows similarly to the proof of Lemma~\ref{lem:sss}. The right inequality follows from the comparison with ECH spectral invariants in \cite[Thm.\ 6.1]{altspec}, together with the subleading asymptotics of the latter proved in \cite{subleading}.
\end{remark}

\begin{proof}[Proof of Proposition~\ref{prop:altspecproperties}.]
The Conformality, $C^0$-Continuity, and Spectrality properties are among the properties proved in \cite[Thm.\ 1.14]{altspec}. The Weyl law is proved in \cite[Thm.\ 1.19]{altspec}.

To prove the Capacity Bound property, suppose first that $f_1 < f_2$ and the contact forms $e^{f_i}\lambda$ are nondegenerate. In this case, it is shown in \cite[Lem.\ 4.4]{altspec} that if $(X,\omega)$ is a four-dimensional compact Liouville domain, and if there exists a symplectic embedding $(X,\omega)\hookrightarrow M_{f_1,f_2}$, then
\[
c_{k+l}(Y,e^{f_2}\lambda) \ge c_k(Y,e^{f_1}\lambda) + c_l^{\op{Alt}}(X,\omega).
\]
It then follows from the definition of $c_l^{\op{Alt}}(M_{f_1,f_2})$ that \eqref{eqn:capacitybound} holds in this case. The general case where $f_1\le f_2$ now follows from the $C^0$-continuity of $c_k$ and $c_{k+l}$ and the Monotonicity property of $c_l^{\op{Alt}}$.
\end{proof}

We will need the following refinement of the Spectrality property for one-parameter families of contact forms.

\begin{lemma}
\label{lem:derivative}
(cf. \cite[Lem.\ 3.2]{irieequi})
Let $Y$ be a closed three-manifold and let $(\lambda_\tau)_{\tau\in(-\epsilon,\epsilon)}$ be a smooth one-parameter family of contact forms on $Y$. Let $k$ be a nonnegative integer. Suppose that:
\begin{description}
\item{(i)} Every Reeb orbit for $\lambda_0$ with action $\le c_k(Y,\lambda_0)$ is nondegenerate.
\item{(ii)} The function $(-\epsilon,\epsilon)\to\R$ sending $\tau\mapsto c_k(Y,\lambda_\tau)$ is differentiable at $\tau=0$.
\end{description}
Then there exists an orbit set $\alpha$ for $\lambda_0$ such that
\[
c_k(Y,\lambda_0) = \int_\alpha\lambda_0
\]
and
\[
\frac{d}{d\tau}\bigg|_{\tau=0}c_k(Y,\lambda_\tau) = \int_\alpha\frac{d}{d\tau}\bigg|_{\tau=0}\lambda_\tau.
\]
\end{lemma}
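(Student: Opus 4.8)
The plan is to follow the strategy of \cite[Lem.\ 3.2]{irieequi}, adapted to the elementary spectral invariants. The starting point is the Spectrality property: for each $\tau$ near $0$, there is an orbit set $\alpha_\tau$ with $c_k(Y,\lambda_\tau) = \int_{\alpha_\tau}\lambda_\tau = \mathcal{A}_{\lambda_\tau}(\alpha_\tau)$. The difficulty is that there is no a priori reason for $\alpha_\tau$ to be independent of $\tau$, or even to vary continuously; different orbit sets could realize the spectral invariant for different parameter values. However, hypothesis (i) says that all Reeb orbits for $\lambda_0$ of action $\le c_k(Y,\lambda_0)$ are nondegenerate, and nondegenerate Reeb orbits persist and vary smoothly under small perturbations of the contact form. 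So the set of ``candidate actions'' $\{\mathcal{A}_{\lambda_\tau}(\alpha)\}$, as $\alpha$ ranges over orbit sets built from these finitely many persisting orbits, forms a finite collection of smooth functions of $\tau$.

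First I would make precise the finiteness: since $\lambda_0$ has only finitely many Reeb orbits of action $\le c_k(Y,\lambda_0)$ and these are nondegenerate, for $\epsilon$ small enough $\lambda_\tau$ has, in a fixed action window $[0, c_k(Y,\lambda_0)+1]$, a corresponding finite list of Reeb orbits $\gamma_1(\tau),\dots,\gamma_N(\tau)$ depending smoothly on $\tau$, and \emph{all} Reeb orbits of $\lambda_\tau$ in that window are among these (here one uses that the action of any other Reeb orbit is bounded below away from this window, by a compactness/Arzel\`a--Ascoli argument applied to the Reeb flows $R_{\lambda_\tau}$, which converge in $C^\infty$). Consequently, any orbit set $\alpha$ realizing $c_k(Y,\lambda_\tau)$ for $\tau$ near $0$ must be a nonnegative-integer combination of $\gamma_1(\tau),\dots,\gamma_N(\tau)$, drawn from the finite set $S$ of such combinations whose $\lambda_0$-action is $\le c_k(Y,\lambda_0)+1$. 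For each $\alpha \in S$ write $g_\alpha(\tau) = \mathcal{A}_{\lambda_\tau}(\alpha)$, a smooth function of $\tau$, with $g_\alpha'(0) = \int_\alpha \frac{d}{d\tau}\big|_{\tau=0}\lambda_\tau$ (the orbit set itself moves, but since each $\gamma_i(\tau)$ is a closed Reeb orbit, $\frac{d}{d\tau}\int_{\gamma_i(\tau)}\lambda_\tau = \int_{\gamma_i(\tau)}\dot\lambda_\tau$ by Stokes, the boundary term vanishing).

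Next I would exploit Spectrality together with $C^0$-continuity to pin down the derivative. By Spectrality, $c_k(Y,\lambda_\tau) = g_{\alpha_\tau}(\tau)$ for some $\alpha_\tau \in S$; in particular $c_k(Y,\lambda_\tau) \le \max_{\alpha\in S} g_\alpha(\tau)$ trivially, but more usefully, I claim $c_k(Y,\lambda_\tau) \le \min$? No --- the right statement is the reverse: for each fixed $\alpha$, one does \emph{not} get a one-sided bound directly, so instead argue as follows. Since $S$ is finite, there is a subsequence $\tau_j \to 0^+$ along which $\alpha_{\tau_j}$ is a constant orbit set $\alpha_+$, and a subsequence $\tau_j \to 0^-$ along which it is a constant $\alpha_-$. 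By $C^0$-continuity of $c_k$ and continuity of $g_{\alpha_\pm}$, we get $c_k(Y,\lambda_0) = g_{\alpha_+}(0) = g_{\alpha_-}(0)$, so both $\alpha_+$ and $\alpha_-$ are ``optimal at $\tau=0$.'' Moreover for $\tau$ in the relevant subsequence, $c_k(Y,\lambda_\tau) = g_{\alpha_\pm}(\tau)$, so the difference quotient $\frac{c_k(Y,\lambda_\tau)-c_k(Y,\lambda_0)}{\tau}$ along $\tau_j\to 0^+$ converges to $g_{\alpha_+}'(0)$, and along $\tau_j \to 0^-$ to $g_{\alpha_-}'(0)$. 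By hypothesis (ii) the full difference quotient converges, so $g_{\alpha_+}'(0) = g_{\alpha_-}'(0) = \frac{d}{d\tau}\big|_{\tau=0}c_k(Y,\lambda_\tau)$. Taking $\alpha = \alpha_+$ gives the desired conclusion: $c_k(Y,\lambda_0) = \int_\alpha \lambda_0$ and $\frac{d}{d\tau}\big|_{\tau=0}c_k(Y,\lambda_\tau) = g_\alpha'(0) = \int_\alpha \frac{d}{d\tau}\big|_{\tau=0}\lambda_\tau$.

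The main obstacle I anticipate is the finiteness/persistence step --- justifying rigorously that \emph{all} Reeb orbits of $\lambda_\tau$ below the relevant action threshold are accounted for by the smoothly-varying family $\gamma_1(\tau),\dots,\gamma_N(\tau)$, uniformly in $\tau$, so that $S$ is genuinely finite and $\tau$-independent. This requires (a) nondegeneracy of the $\lambda_0$-orbits in the window to get local uniqueness and smooth dependence via the implicit function theorem applied to the return map, and (b) a compactness argument ruling out new short orbits appearing as $\tau\to 0$: if $\gamma_{\tau_j}$ were Reeb orbits of $\lambda_{\tau_j}$ with bounded action not converging to one of the $\gamma_i(0)$, then (after reparametrizing to unit-length domains) a subsequence converges to a Reeb orbit of $\lambda_0$ in the window, which must be some nondegenerate $\gamma_i(0)$, and then local uniqueness forces $\gamma_{\tau_j} = \gamma_i(\tau_j)$ for large $j$ --- a contradiction. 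Everything else (Stokes for the derivative of action along a moving loop, the pigeonhole on the finite set $S$, and assembling the one-sided limits) is routine.
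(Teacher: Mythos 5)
Your argument is correct and is essentially the paper's own proof: persistence of the finitely many nondegenerate orbit sets with action $\le c_k(Y,\lambda_0)$, Spectrality plus a pigeonhole along a sequence $\tau_j\to 0$, $C^0$-continuity to identify the limit, hypothesis (ii) to identify the derivative with the one-sided limit of difference quotients, and vanishing of the first variation of the action (your Stokes remark) so that only $\int_\alpha \dot\lambda_\tau$ survives. The only imprecision is the claim that \emph{all} $\lambda_\tau$-orbits with action in $[0,c_k(Y,\lambda_0)+1]$ lie in the persisting family --- nondegeneracy is only assumed up to action $c_k(Y,\lambda_0)$, so there could be degenerate $\lambda_0$-orbits just above that level --- but this is harmless since the orbit sets realizing $c_k(Y,\lambda_\tau)$ have total action converging to $c_k(Y,\lambda_0)$ by $C^0$-continuity, so your compactness/local-uniqueness step applies to them after shrinking the window.
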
 

\begin{proof}
By the hypothesis (i), there are only finitely many orbit sets for $\lambda_0$ with action $\le c_k(Y,\lambda_0)$. Denote these orbit sets by $\alpha_0^1,\ldots,\alpha_0^m$. It also follows from hypothesis (i) that if $|\tau|>0$ is small, then these orbit sets persist to orbit sets $\alpha_\tau^1,\ldots,\alpha_\tau^m$ for $\lambda_\tau$, and these include all of the orbit sets for $\lambda_\tau$ with action $\le c_k(Y,\lambda_0)$. Thus by the Spectrality property in Proposition~\ref{prop:altspecproperties}, for each $\tau$ with $|\tau|$ sufficiently small, there exists $i\in\{1,\ldots,m\}$ such that
\begin{equation}
\label{eqn:lambdatau}
c_k(Y,\lambda_\tau) = \int_{\alpha_\tau^i}\lambda_\tau.
\end{equation}
Now let $(\tau_j)_{j=1,\ldots}$ be a sequence of positive numbers converging to zero. Then there must exist a single index $i\in\{1,\ldots,m\}$ which fulfills \eqref{eqn:lambdatau} for infinitely many numbers $\tau_j$ in the sequence. So after passing to a subsequence, \eqref{eqn:lambdatau} holds for all $\tau_j$ for this fixed index $i$. By the $C^0$-Continuity property in Proposition~\ref{prop:altspecproperties}, we have
\[
c_k(Y,\lambda_0) = \int_{\alpha_0^i}\lambda_0.
\]
Given hypothesis (ii), we also have
\[
\begin{split}
\frac{d}{d\tau}\bigg|_{\tau=0}c_k(Y,\lambda_\tau) &= \lim_{j\to\infty}\frac{\int_{\alpha_{\tau_j}^i}\lambda_{\tau_j} - \int_{\alpha_0^i}\lambda_0}{\tau_j}\\
&= \frac{d}{d\tau}\bigg|_{\tau = 0}\int_{\alpha_\tau^i}\lambda_\tau\\
&= \int_{\alpha_0^i}\frac{d}{d\tau}\bigg|_{\tau=0}\lambda_\tau.
\end{split}
\]
Here in the last line there is no term involving the derivative of $\alpha_\tau^i$ because $\alpha_0^i$ is an orbit set for $\lambda_0$.
\end{proof}

\section{Proof of the main theorem}
\label{sec:proof}

We now prove Theorem~\ref{thm:main1}.

Throughout this section, if $r_0>0$, let $\beta:[0,r_0)\to[0,1]$ denote a cutoff function which equals $0$ on the interval $[0,r_0/3]$, which equals $1$ on the interval $[2r_0/3,r_0)$, and whose derivative satisfies $\beta'(r)\in[0,4r_0^{-1}]$.


\subsection{Reduction to the very nice case}
\label{sec:rnc}

Let $u:\Sigma\to Y$ be an admissible symplectic surface. We now consider a situation in which the surface has a simple form near its boundary. Roughly speaking, the condition on the surface is that for each boundary Reeb orbit, the angles of the conormal vectors of the surface are evenly spaced in the normal bundle to the Reeb orbit, and rotate at uniform speed as one moves along the Reeb orbit. To be precise:

\begin{definition}
\label{def:verynice}
Suppose that $(Y,\lambda,u)$ is nice as in Definition~\ref{def:newnice}. We say that $(Y,\lambda,u)$ is {\bf very nice\/} if for each simple Reeb orbit $\gamma:\R/T\Z\to Y$ in $u(\partial\Sigma)$, the neighborhood identification \eqref{eqn:Ngammanew} in Definition~\ref{def:newnice} can be chosen with the following property. Let $q$ denote the covering multiplicity of each component of $\partial\Sigma$ mapping to $\gamma$ (see Remark~\ref{rem:samedegree}). Then: 
\begin{itemize}
\item
There exists an integer $p$ relatively prime to $q$ such that if $B$ is a component of $\partial\Sigma$ with $u(B)=\gamma(\R/T\Z)$, and if $N(B)$ denotes the connected component of $u^{-1}(N(\gamma))$ containing $B$, then there exists $\theta_B\in \R/\Z$ such that
\begin{equation}
\label{eqn:nicesigma}
\psi^{-1}(N(B)) = \left\{(t,r,\theta) \;\bigg|\; \theta = 2\pi\left(\theta_B + \frac{j}{q} +  \frac{pt}{qT}\right), \; j=0,\ldots,q-1\right\}.
\end{equation}
Here $t$ denotes the $\R/T\Z$ coordinate, and $r,\theta$ are polar coordinates on $D^2(r_0)$.
\item
If $\partial\Sigma$ has $m$ components mapping to $\gamma$, then the numbers $\theta_B$ for the different components differ by integer multiples of $1/(mq)$.
\end{itemize}
\end{definition}

\begin{lemma}
\label{lem:isotopesurface}
Let $u:\Sigma\to Y$ be a nice admissible symplectic surface. Then there is an admissible symplectic surface $u':\Sigma'\to Y$ such that:
\begin{itemize}
\item
$u'$ is isotopic to $u$ rel boundary.
\item
$(Y,\lambda,u')$ is very nice.
\end{itemize}
\end{lemma}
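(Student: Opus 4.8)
The plan is to modify $u$ in a neighborhood of $\partial\Sigma$, one boundary Reeb orbit at a time, using the model coordinates $\psi$ guaranteed by the niceness hypothesis. Fix a simple Reeb orbit $\gamma:\R/T\Z\to Y$ in $u(\partial\Sigma)$ with action $T$, and work in the coordinates $(t,r,\theta)$ on $N(\gamma)\cong(\R/T\Z)\times D^2(r_0)$ in which $R=\partial_t + (2\pi\rho/T)\partial_\theta$. Since $\gamma$ is elliptic with these model coordinates, the monodromy of $\xi|_\gamma$ is a rigid rotation, so each component $B$ of $\partial\Sigma$ mapping to $\gamma$, together with a small collar $N(B)\subset u^{-1}(N(\gamma))$, is carried by $u$ into $N(\gamma)$ as a surface transverse to $R$ whose boundary wraps $q$ times around $\gamma$ and whose conormal makes $p_\tau$ turns, where $p_\tau$ is as in Definition~\ref{def:rotsigma} (so $p$ in Definition~\ref{def:verynice} is $p_\tau$, relatively prime to $q$). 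The idea is to replace $u|_{N(B)}$ with the explicit ``linear'' model \eqref{eqn:nicesigma}, which is a genuine surface transverse to $R$ (one checks $d\lambda$ is positive on it using that the slope $p/q$ together with $\rho$ gives the correct nonzero sign of $\op{rot}_\Sigma$), keeping the outer boundary of the collar fixed.

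The key steps, in order: (1) First shrink the collar: choose $r_1<r_0$ small enough that $u^{-1}(N_{r_1}(\gamma))$ near $B$ is an embedded annular collar on which $u$ is transverse to $R$, where $N_{r_1}(\gamma)=\psi((\R/T\Z)\times D^2(r_1))$. (2) On each such collar, write $u$ in the coordinates $(t,r,\theta)$; since $B$ maps to $\gamma$ with degree $q$ and the conormal rotates $p_\tau$ times, the restriction of $u$ to $\partial N(B)$ (the inner boundary) can be described by $\theta=2\pi(\theta_B + j/q + (p_\tau t)/(qT)) + O(r)$ corrections, and one can $C^\infty$-isotope $u$ rel the outer boundary so that on an inner sub-collar it is exactly the model \eqref{eqn:nicesigma}; the standard interpolation argument (linear homotopy of the defining functions, cut off by $\beta$) works because the space of surfaces in $N(\gamma)$ transverse to the constant-slope Reeb field $R$, with fixed boundary homotopy data, is convex in an appropriate sense — more precisely, any such surface is a graph over the model surface in the $\theta$-direction, and graphs with small enough slope stay transverse to $R$. (3) Check that the resulting surface still has $u'^*d\lambda>0$: on the model \eqref{eqn:nicesigma} this is a direct computation, and during the isotopy transversality to $R$ is maintained by the smallness in step (2), which forces positivity of $u'^*d\lambda$. (4) Arrange the second bullet of Definition~\ref{def:verynice}: if $m$ components $B_1,\dots,B_m$ map to $\gamma$, their angular offsets $\theta_{B_i}$ are a priori arbitrary, but conditions (ii)–(iii) of Definition~\ref{def:ass} (the interiors are disjoint and all transverse to $R$ with the same slope $p/q$) force the $m$ copies of the model to be disjoint, which happens exactly when the $\theta_{B_i}$ are distinct mod $1/q$; a further small isotopy, pushing the collars along $\gamma$, makes them equally spaced, i.e. differing by multiples of $1/(mq)$. (5) Repeat independently for each simple Reeb orbit in $u(\partial\Sigma)$ (finitely many, with disjoint neighborhoods $N(\gamma)$), and since all modifications are supported in collars of $\partial\Sigma$ and fix a neighborhood of the outer boundary of each collar, they patch to a global isotopy rel $\partial\Sigma$; set $\Sigma'=\Sigma$ and $u'$ the end of the isotopy.

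The main obstacle I expect is step (2) together with step (4): showing that an arbitrary admissible symplectic surface, near a boundary Reeb orbit, can be isotoped rel the outer collar boundary into the exact linear model \eqref{eqn:nicesigma} while staying transverse to $R$ throughout. The transversality constraint is what makes this more than a soft topological statement: one must verify that the interpolation (graph over the model, with the graphing function cut off by $\beta$) does not destroy positive transversality to $R=\partial_t+(2\pi\rho/T)\partial_\theta$. This reduces to a bound on $\partial_\theta$-derivatives of the graphing function — essentially that $1 + (2\pi\rho/T)\partial_\theta(\text{graph})$ stays positive — which holds once the collar $r_1$ is taken small enough that $u$ is $C^1$-close to its own linearization at $B$, combined with choosing the cutoff $\beta$ with the controlled derivative bound $\beta'\le 4r_1^{-1}$ stated at the start of \S\ref{sec:proof} (rescaled appropriately). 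The disjointness bookkeeping in step (4) is routine once one observes that disjointness of the interiors in Definition~\ref{def:ass}(ii) already pins down the $\theta_{B_i}$ mod $1/q$, so only an innocuous equal-spacing adjustment remains. Everything else (positivity of $d\lambda$ on the model, the global patching, finiteness of boundary orbits) is formal.
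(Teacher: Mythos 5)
Your overall plan is the same as the paper's: parametrize the collar of each boundary component as a graph of the angle $\theta$ over $(\tilde t,r)$ as in \eqref{eqn:Nparametrization}, interpolate to the linear model \eqref{eqn:nicesigma} with the cutoff $\beta(r)$, and patch over the finitely many boundary orbits. However, the justification you give for the one step that actually carries content --- that transversality to $R$ survives the interpolation --- has a genuine gap. You reduce it to the claim that, after shrinking the collar, $u$ is $C^1$-close to its own linearization at $B$, so that the graphing function over the model has ``small slope.'' That is not available: the trace of the surface at $r=0$ is a fixed angle function $\zeta(\tilde t)=\eta(\tilde t,0)$ whose total winding is $p$ but which can be wildly non-linear (its derivative can be large and even of the ``wrong'' sign at times, since transversality to $R$ only constrains it from one side), and shrinking $r_1$ does nothing to change this boundary trace. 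So no choice of collar makes the graphing function $C^1$-small, and an argument that needs two-sided smallness of $\partial(\mathrm{graph})$ breaks down. Relatedly, the quantity you propose to control, $1+(2\pi\rho/T)\partial_\theta(\mathrm{graph})$, is not the right transversality function (it presumes the collar is a graph of $t$ over $\theta$, which already fails for the model when $p=0$), and the bound $\beta'\le 4r_0^{-1}$ is irrelevant here: $R$ has no $\partial_r$ component, so the $r$-derivative introduced by the cutoff never enters the transversality condition.

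The correct mechanism, which you gesture at with the word ``convex'' but then abandon, is one-sided rather than small: writing the collar as $\theta=2\pi\eta(\tilde t,r)$, positive transversality of $\op{int}(\Sigma)$ to $R$ is exactly the strict inequality $\partial\eta/\partial\tilde t<\op{rot}_\tau(\gamma)/T$ (for boundary orientation agreeing with the Reeb orientation; reversed otherwise), i.e.\ \eqref{eqn:partialeta}. Integrating this over $\tilde t\in\R/qT\Z$ gives $p/q<\op{rot}_\tau(\gamma)$, i.e.\ \eqref{eqn:pqr}, so the constant slope $p/(qT)$ of the model satisfies the same strict inequality. Since the interpolant $\eta'=\theta_B+(1-\beta(r))\,p\tilde t/(qT)+\beta(r)\,\eta$ has $\partial\eta'/\partial\tilde t$ equal to a convex combination of $p/(qT)$ and $\partial\eta/\partial\tilde t$, the one-sided inequality \eqref{eqn:stilltransverse} persists for every value of the cutoff, with no smallness hypothesis at all; the only shrinking of $r_0$ needed is to ensure the graph parametrization and \eqref{eqn:partialeta} hold on the whole collar. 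If you replace your ``$C^1$-close to the linearization'' step by this one-sided convexity argument (and treat the orientation-reversed components by flipping the inequalities), your proof matches the paper's; the bookkeeping for several components mapping to the same $\gamma$, with offsets $\theta_B$ spaced by multiples of $1/(mq)$, is then handled as you describe.
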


\begin{remark}
\label{rem:kindanice}
Since $u'$ is isotopic to $u$ rel boundary, both sides of the inequality \eqref{eqn:boxed} in Theorem~\ref{thm:main1} are the same for $u$ and $u'$. Thus Lemma~\ref{lem:isotopesurface} implies that in proving Theorem~\ref{thm:main1}, we can assume without loss of generality that the given admissible symplectic surface is very nice.
\end{remark}

\begin{proof}[Proof of Lemma~\ref{lem:isotopesurface}]
By shrinking $r_0$ if necessary, we can assume that the neighborhoods $N(\gamma)$ for different Reeb orbits $\gamma$ in $u(\partial\Sigma)$ are disjoint.

Let $B$ be a boundary component of $\Sigma$ mapping to $\gamma$. 
Suppose to start that $B$ is the only boundary component of $\Sigma$ mapping to $\gamma$, and that the boundary orientation of $\partial\Sigma$ agrees with the Reeb orientation of $\gamma$.

By decreasing $r_0$ if necessary, we can assume that $\psi^{-1}(N(B))$ has a parametrization of the form
\begin{equation}
\label{eqn:Nparametrization}
\begin{split}
(\R/qT\Z) \times [0,r_0) & \longrightarrow (\R/T\Z) \times D^2(r_0),\\
(\tilde{t},r) & \longmapsto \left(t, r, 2\pi \eta\left(\tilde{t},r\right)\right)
\end{split}
\end{equation}
where $\eta: (\R/qT\Z) \times [0,r_0) \longrightarrow \R/\Z$, and $t$ denotes the projection of $\tilde{t}$ to $\R/T\Z$.

For $\tilde{t}\in\R/qT\Z$, consider the conormal direction
\begin{equation}
\label{eqn:conormaldirection}
\zeta(\tilde{t}) = \eta(\tilde{t},0) = \lim_{r\searrow 0}\eta(\tilde{t},r) \in \R/\Z.
\end{equation}
The total rotation number of the conormal direction is an integer
\begin{equation}
\label{eqn:conormalrotation}
p = \int_0^{qT}\frac{d\zeta(\tilde{t})}{d\tilde{t}}d\tilde{t}.
\end{equation}
Since $u|_{\op{int}(\Sigma)}$ is an embedding, $p$ is relatively prime to $q$.

Since $\op{int}(\Sigma)$ is transverse to the Reeb vector field, it follows from \eqref{eqn:Nparametrization} and our hypothesis on the boundary orientation that if $r>0$ is small, then
\begin{equation}
\label{eqn:partialeta}
\frac{\partial\eta}{\partial \tilde{t}} < \frac{\op{rot}_\tau(\gamma)}{T}.
\end{equation}
Here $\tau$ denotes the trivialization of $\gamma^*\xi$ determined by the derivative of the neighborhood identification \eqref{eqn:Ngammanew}. It follows from \eqref{eqn:conormaldirection}, \eqref{eqn:conormalrotation}, and \eqref{eqn:partialeta} that
\begin{equation}
\label{eqn:pqr}
\frac{p}{q} < \op{rot}_\tau(\gamma).
\end{equation}

Pick an arbitrary $\theta_B\in\R/\Z$. We now replace $N(B)$ with $N'(B)$ defined by replacing $\eta$ in \eqref{eqn:Nparametrization} with the function
\[
\eta': (\R/qT\Z) \times [0,r_0) \longrightarrow \R/\Z
\]
defined by
\[
\eta'\left(\tilde{t},r\right) = \theta_B + (1-\beta(r)) \frac{p\tilde{t}}{qT} + \beta(r)\eta\left(\tilde{t},r\right).
\]
Then $N'(B)$ satisfies \eqref{eqn:nicesigma} for a smaller neighborhood $N(\gamma)$. Furthermore, by \eqref{eqn:partialeta} and \eqref{eqn:pqr}, if $r_0$ is sufficiently small then
\begin{equation}
\label{eqn:stilltransverse}
\frac{\partial \eta'}{\partial \tilde{t}} < \frac{\op{rot}_\tau(\gamma)}{T}.
\end{equation}
Thus if $r_0$ is sufficiently small, then $N'(B)$ is still transverse to the Reeb vector field $R_\lambda$.

If the boundary orientation of $B$ is opposite the Reeb orientation of $\gamma$, then the construction above works the same, except that the directions of the inequalities in \eqref{eqn:partialeta}, \eqref{eqn:pqr}, and \eqref{eqn:stilltransverse} are switched. If $\partial\Sigma$ has $m>1$ boundary components $B$ mapping to $\gamma$, then we define the modifications $N'(B)$ as above, using different values of $\theta_B$ that differ by integer multiples of $1/(mq)$, and the resulting surface $\Sigma'_\gamma$ will still be an embedding on its interior.

The surface $u':\Sigma'\to Y$ is now defined by modifying $\Sigma$ in a neighborhood of each Reeb orbit $\gamma$ in $u(\partial\Sigma)$ as above.
\end{proof}


\subsection{Inflating the contact form in a slab}
\label{sec:inflation}

We now discuss a certain deformation of the contact form which will be needed for the proof of Theorem~\ref{thm:main1}.

We begin by specifying various choices that will enter into the construction. Suppose that $(Y,\lambda,u)$ is very nice as in Definition~\ref{def:verynice}. Fix $r_0>0$ sufficiently small so that $r_0$ can be used for all of the neighborhood identifications \eqref{eqn:Ngammanew} in Definition~\ref{def:verynice}. Write
\[
\Sigma_0 = \Sigma \setminus \bigcup_{\gamma\subset u(\partial\Sigma)}N(\gamma).
\]
Let $\epsilon_0>0$ be given.

By \eqref{eqn:nicesigma}, if $s_0>0$ is sufficiently small, then the Reeb flow
\begin{equation}
\label{eqn:s0}
\begin{split}
[0,s_0]\times\op{int}(\Sigma) & \longrightarrow Y,\\
(s,z) &\longmapsto \Phi(s,u(z))
\end{split}
\end{equation}
is an embedding. Choose such an $s_0$. Also assume that $s_0$ is sufficiently small so that for every simple Reeb orbit $\gamma$ in $u(\partial\Sigma)$, we have
\begin{equation}
\label{eqn:s0small}
\left(\frac{\mathcal{A}(\gamma)}{\gamma\cdot\Sigma}-s_0\right)^{-1} < \frac{\gamma\cdot\Sigma}{\mathcal{A}(\gamma)}+\epsilon_0.
\end{equation}
Fix a smooth function $\zeta:[0,s_0]\to[0,\infty)$, not identically zero, with $\zeta(s)=0$ for $s$ close to $0$ or $s_0$, with derivative $\zeta'(s)\ge 0$ for $s\in[0,s_0/2]$, and with
\begin{equation}
\label{eqn:zetasymmetry}
\zeta(s)=\zeta(s_0-s).
\end{equation}
Given $\delta\ge 0$, let $\bar{\delta}\ge 0$ denote the unique number such that
\begin{equation}
\label{eqn:uniquedelta}
\int_0^{s_0}e^{\bar{\delta}\zeta(s)}ds = s_0 + \delta.
\end{equation}

\begin{lemma}
\label{lem:slab}
Let $u:\Sigma\to(Y,\lambda)$ be an admissible symplectic surface. Assume $(Y,\lambda,u)$ is very nice and that $\lambda$ is nondegenerate. Choose $r_0$, $\epsilon_0$, $s_0$, and $\zeta$ as above. Then there exist a smooth family of contact forms $\{\lambda_\delta\}_{\delta\ge0}$ with $\lambda_0=\lambda$, and for each $\delta>0$ sufficiently small an injection $\imath_\delta: \mathcal{P}(\lambda) \hookrightarrow \mathcal{P}(\lambda_\delta)$, with the following properties:
\begin{description}
\item{(a)}
We have $\lambda_\delta=e^{f_\delta}\lambda$ for a function $f_\delta:Y\to\R^{\ge 0}$ such that if $s\in[0,s_0]$ and $z\in\Sigma_0$, then $f_\delta(\Phi(s,u(z)))=\bar{\delta}\zeta(s)$.
\item{(b)}
If $\gamma\in\mathcal{P}(\lambda)$, then:
\begin{description}
\item{(i)}
 $\imath_\delta(\gamma)$ is nondegenerate, and $\imath_\delta(\gamma)\cdot\Sigma=\gamma\cdot\Sigma$.
\item{(ii)}
We have
\[
\mathcal{A}_{\lambda_\delta}(\imath_\delta(\gamma)) = \left\{\begin{array}{cl} 
\mathcal{A}_\lambda(\gamma) + \delta(\gamma\cdot\Sigma),
&
\gamma\not\subset u(\partial\Sigma),\\
\mathcal{A}_\lambda(\gamma),
&
\gamma\subset u(\partial\Sigma).
\end{array}
\right.
\]
\end{description}
\item{(c)}
If $\gamma'\in\mathcal{P}(\lambda_\delta)\setminus\imath_\delta(\mathcal{P}(\lambda))$, then:
\begin{description}
\item{(i)}
There is a simple Reeb orbit $\gamma$ in $u(\partial\Sigma)$ such that
\begin{equation}
\label{eqn:mqrho}
\frac{\gamma'\cdot\Sigma}{\mathcal{A}_{\lambda_\delta}(\gamma')} < \frac{\gamma\cdot\Sigma}{\mathcal{A}_\lambda(\gamma)} + \epsilon_0.
\end{equation}
\item{(ii)} 
We have
\begin{equation}
\label{eqn:derivativesmall}
\int_{\gamma'}\frac{d}{d\delta}\lambda_\delta < (1+\epsilon_0)(\gamma'\cdot\Sigma).
\end{equation}
\end{description}
\item{(d)}
For almost every $\delta$: The contact form $\lambda_\delta$ is nondegenerate, and for each nonnegative integer $k$ the function $\delta\mapsto c_k(Y,\lambda_\delta)$ is differentiable at $\delta$.
\end{description}
\end{lemma}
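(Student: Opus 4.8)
The plan is to take $\lambda_\delta=e^{f_\delta}\lambda$, where $f_\delta$ inflates $\lambda$ inside the embedded slab $W=\Phi([0,s_0]\times u(\op{int}\Sigma))$, on which we use ``flow coordinates'' $(s,z)$, $s\in[0,s_0]$, $z\in\op{int}\Sigma$, with $R_\lambda=\partial_s$. Over the core we set $f_\delta(s,z)=\bar\delta\zeta(s)$ for $z\in\Sigma_0$; over the part of $\Sigma$ lying in a boundary neighborhood $N(\gamma)$ we use the very nice coordinates $(t,r,\theta)$ of \eqref{eqn:Ngammanew}--\eqref{eqn:nicesigma}, parametrize the flow-out of the $q$ helical sheets by $(s,t,r)$, and set $f_\delta=\bar\delta\zeta(s)\beta(r)$ with $\beta$ the radial cutoff fixed at the start of the section; then we extend $f_\delta$ by $0$. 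Since $\zeta$ vanishes near $\{s=0\}$ and $\{s=s_0\}$, $\beta$ vanishes near $r=0$ and equals $1$ near $r=r_0$, and $\bar 0=0$ by \eqref{eqn:uniquedelta}, this $f_\delta$ is smooth, nonnegative, agrees on the overlap, vanishes near $u(\partial\Sigma)$ and outside $W$, and $f_0\equiv 0$; smoothness of $\delta\mapsto\bar\delta$, hence of the whole family, follows from the implicit function theorem applied to \eqref{eqn:uniquedelta}, since $\int_0^{s_0}\zeta(s)e^{\bar\delta\zeta(s)}\,ds>0$. This gives property~(a).

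The key local fact is that over $\Sigma_0$ a direct computation gives $R_{\lambda_\delta}=e^{-\bar\delta\zeta(s)}\bigl(\partial_s-\bar\delta\zeta'(s)Z\bigr)$, where $Z$ is the Liouville vector field on $(\Sigma,u^*d\lambda)$ with primitive $u^*\lambda$. Hence along a $\lambda_\delta$-trajectory through $W$ the $\Sigma$-coordinate equals $\psi^Z_{-\bar\delta\zeta(s)}(z_0)$, which returns to $z_0$ at $s=s_0$: the trajectory enters $W$ at $(0,z_0)$ and exits at $(s_0,z_0)=\Phi(s_0,u(z_0))$ after $\lambda_\delta$-action $\int_0^{s_0}e^{\bar\delta\zeta(s)}\,ds=s_0+\delta$ by \eqref{eqn:uniquedelta}, and as a map of the transversal $\Sigma$ to $\Phi(s_0,\Sigma)$ this ``passage'' coincides with the time-$s_0$ $\lambda$-flow $z_0\mapsto\Phi(s_0,u(z_0))$. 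On the other hand, since $R_\lambda=\partial_t+\tfrac{2\pi\rho}{T}\partial_\theta$ on each $N(\gamma)$ with $\rho\notin\Q$ (as $\lambda$ is nondegenerate), each $N(\gamma)$ is $R_\lambda$-invariant and $\gamma$ is its only closed $\lambda$-orbit; so every $\gamma\in\mathcal{P}(\lambda)$ not contained in $u(\partial\Sigma)$ meets $\Sigma$ only inside $\Sigma_0$. For such $\gamma$ define $\imath_\delta(\gamma)$ by concatenating the arcs of $\gamma$ outside $W$ (where $\lambda_\delta=\lambda$) with the $\lambda_\delta$-passages through $W$; this is a $\lambda_\delta$-Reeb orbit with the same intersection points with $\Sigma$, so $\imath_\delta(\gamma)\cdot\Sigma=\gamma\cdot\Sigma$ and $\mathcal{A}_{\lambda_\delta}(\imath_\delta(\gamma))=\mathcal{A}_\lambda(\gamma)+\delta(\gamma\cdot\Sigma)$, and its first return map to the transversal $\Sigma$ coincides with that of $\gamma$, so $\imath_\delta(\gamma)$ and all its iterates are nondegenerate. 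For $\gamma\subset u(\partial\Sigma)$ set $\imath_\delta(\gamma)=\gamma$, which is still a nondegenerate $\lambda_\delta$-orbit with unchanged action and unchanged $\gamma\cdot\Sigma$ because $\lambda_\delta=\lambda$ near $\gamma$. Injectivity of $\imath_\delta$ and property~(b) follow, and one checks that the image of $\imath_\delta$ is exactly the set of $\lambda_\delta$-orbits that are disjoint from $W$ or meet $\Sigma_0$.

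For property~(c): if $\gamma'\in\mathcal{P}(\lambda_\delta)$ is not in the image of $\imath_\delta$, then $\gamma'$ meets $W$ but not $\Sigma_0$, hence meets $\Sigma$ only inside some $N(\gamma)$; there, since $s_0<\mathcal{A}(\gamma)/(\gamma\cdot\Sigma)$ (implicit in \eqref{eqn:s0small}) the slab over a sheet does not reach the next sheet, and since $\lambda_\delta=\lambda$ and hence $r$ is preserved outside $W$, between two consecutive intersections of $\gamma'$ with $\Sigma$ the trajectory exits $W$ through $\{s=s_0\}$ at a point in the model position and then flows by $\lambda$ for $\lambda_\delta$-action exactly $\mathcal{A}(\gamma)/(\gamma\cdot\Sigma)-s_0$. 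Summing over the $\gamma'\cdot\Sigma$ such arcs and discarding the nonnegative part of $\gamma'$ inside $W$ gives $\mathcal{A}_{\lambda_\delta}(\gamma')\ge(\gamma'\cdot\Sigma)\bigl(\mathcal{A}(\gamma)/(\gamma\cdot\Sigma)-s_0\bigr)$, which rearranges, via \eqref{eqn:s0small}, to \eqref{eqn:mqrho}. For \eqref{eqn:derivativesmall}, write $\tfrac{d}{d\delta}\lambda_\delta=(\partial_\delta f_\delta)\lambda_\delta$ with $\partial_\delta f_\delta=\zeta(s)\beta(r)\bigl(\int_0^{s_0}\zeta(s)e^{\bar\delta\zeta(s)}\,ds\bigr)^{-1}$ supported in $W$; integrating over $\gamma'$ with respect to $\lambda_\delta$-arclength and splitting into the $\gamma'\cdot\Sigma$ slab passages, each passage contributes at most $1$ (since $\beta\le 1$ and $e^{\bar\delta\zeta\beta(r)}\le e^{\bar\delta\zeta}$, with an $O(\delta)$ correction over $N(\gamma)$ because the passage does not close up exactly there), which gives \eqref{eqn:derivativesmall} for $\delta$ small. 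Finally, for property~(d): for each $k$ the function $\delta\mapsto c_k(Y,\lambda_\delta)$ is non-decreasing by Monotonicity (Remark~\ref{rem:altspecmonotone}), since $f_\delta$ is pointwise non-decreasing in $\delta$, hence differentiable at almost every $\delta$ by the Lebesgue differentiation theorem; intersecting over $k\in\N$ still leaves almost every $\delta$. For nondegeneracy of $\lambda_\delta$ for almost every $\delta$, note that the orbits in the image of $\imath_\delta$ and their iterates are automatically nondegenerate, while the remaining orbits pass through $W\cap N(\gamma)$; a $C^\infty$-small perturbation of $f_\delta$ supported in $W$ but away from the $\Sigma_0$-flow-out and from $u(\partial\Sigma)$ preserves (a)--(c) and can be chosen, by a standard parametrized transversality argument, so that those orbits are nondegenerate for almost every $\delta$.

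The hard parts will be the analysis near $u(\partial\Sigma)$: showing that $\gamma$ is the only closed $\lambda$-orbit meeting $N(\gamma)$, so that (b) holds as an exact identity; controlling the ``new'' $\lambda_\delta$-orbits finely enough that the calibrated choice of $s_0$ in \eqref{eqn:s0small} yields (c) (in particular, that each between-sheet arc has $\lambda$-action exactly $\mathcal{A}(\gamma)/(\gamma\cdot\Sigma)-s_0$, despite the $O(\delta)$ drift inside $W$); and carrying out the parametrized transversality for (d) without disturbing the exact statements (a) and (b). This is exactly where the ``very nice'' hypothesis is used.
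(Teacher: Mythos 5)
Your proposal is correct and follows essentially the same route as the paper: the same inflation $\lambda_\delta=e^{\bar{\delta}\zeta(s)\beta(r)}\lambda$ on the flow-out slab, the same Reeb vector field computations showing each slab passage over $\Sigma_0$ is the identity with time $s_0+\delta$ (you get this from $\zeta(0)=\zeta(s_0)=0$, the paper from the $s\leftrightarrow s_0-s$ symmetry of $\zeta$), the same exact between-sheet time $\mathcal{A}(\gamma)/(\gamma\cdot\Sigma)-s_0$ for (c)(i), the same bound on $\int_{\gamma'}\frac{d}{d\delta}\lambda_\delta$ for (c)(ii), and the same monotone-perturbation argument for (d). The ``hard parts'' you flag are handled in the paper by one quantitative step you should make explicit: for $\delta$ small, no trajectory of $R_{\lambda_\delta}$ in the slab meets both the region over $\Sigma\setminus\Sigma_1$ (where $\beta'\neq 0$) and the region over $\Sigma_0$, which is what confines each passage either over a single $N(B)$ or over $\Sigma_1$ and thereby justifies both the exact identities in (b) and the single-$N(\gamma)$ claim and exit-point location in (c)(i); no further idea beyond this smallness condition is needed.
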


\begin{proof}
We proceed in seven steps.

{\em Step 1.\/} We first give a provisional definition of $\{\lambda_\delta\}$, which we will show satisfies properties (a)--(c). In Step 7 below we will argue that the family $\{\lambda_\delta\}$ can be perturbed, maintaining properties (a)--(c), so that property (d) also holds.

Define a function $\beta_\Sigma:\Sigma\to[0,1]$ as follows. If $\gamma$ is a simple Reeb orbit in $u(\partial\Sigma)$, then for $z\in\Sigma\cap N(\gamma)$, identified with a point $(t,r,\theta)\in (\R/T\Z)\times D^2(r_0)$, we define $\beta_\Sigma(z) = \beta(r)$, where the cutoff function $\beta$ was chosen at the beginning of \S\ref{sec:proof}. On the rest of $\Sigma$ we define $\beta_\Sigma\equiv 1$.

Let $\mathcal{S}$ denote the image of the map \eqref{eqn:s0}, which we identify with $[0,s_0]\times\op{int}(\Sigma)$ with coordinates $s,z$. Define a contact form $\lambda_\delta$ on $Y$ by
\begin{equation}
\label{eqn:lambdadelta}
\begin{split}
\lambda_\delta|_\mathcal{S} &= e^{\bar{\delta}\zeta(s)\beta_\Sigma(z)}\lambda,\\
\lambda_\delta|_{(Y\setminus \mathcal{S})} &= \lambda.
\end{split}
\end{equation}
Then property (a) holds by construction.

{\em Step 2.\/} We now compute the Reeb vector field $R_{\lambda_\delta}$ on $\mathcal{S}$, in equations \eqref{eqn:ReebNB} and \eqref{eqn:ReebSigma0} below. To start, note that since the Reeb vector field $R_\lambda$ on $\mathcal{S}$ is given by $\partial_s$, we have
\[
\lambda|_{\mathcal{S}} = ds + \lambda_\Sigma,
\]
where $\lambda_\Sigma$ is a one-form on $\op{int}(\Sigma)$ which does not depend on $s\in[0,s_0]$. Thus
\begin{equation}
\label{eqn:lambdadeltaS}
{\lambda_\delta}|_{\mathcal{S}} = e^{\bar{\delta}\zeta(s)\beta_\Sigma(z)}(ds+\lambda_\Sigma).
\end{equation}

Next, given a simple Reeb orbit $\gamma\subset u(\partial\Sigma)$, we compute the contact form $\lambda$ on the neighborhood $N(\gamma)$. Since $\lambda$ is preserved by the Reeb flow, and since the return map along $\gamma$ is an irrational rotation, it follows that under the neighborhood identification \eqref{eqn:Ngammanew}, $\lambda$ has the form
\[
\psi^*\lambda = f(r)dt + g(r)d\theta + h(r)dr.
\]
Then equation \eqref{eqn:modelReeb} gives an ODE for $f,g,r$ whose solution is
\begin{equation}
\label{eqn:nicelambda}
\psi^*\lambda = dt + \frac{r^2}{2} d\left(\theta - \frac{2\pi\op{rot}_\tau(\gamma)}{T}t\right).
\end{equation}

If $B$ is a boundary component of $\Sigma$, let $\gamma\in\mathcal{P}(\lambda)$ denote the corresponding simple Reeb orbit, and let $N(B)$ denote the interior of the corresponding component of $u^{-1}(\Sigma)\subset N(\gamma)$. Let $p$ and $q$ be the integers associated to $\gamma$ in Definition~\ref{def:verynice}.  By \eqref{eqn:nicesigma}, we can parametrize $N(B)$ by $(\R/qT\Z)\times (0,r_0)$ via the map sending
\begin{equation}
\label{eqn:parametrization}
(\tilde{t},r) \longmapsto \left(t,r,2\pi\left(\theta_B + \frac{p\widetilde{t}}{qT}\right)\right)
\end{equation}
for some $\theta_B\in\R/\Z$, where $t$ denotes the projection of $\widetilde{t}\in\R/qT\Z$ to $\R/T\Z$. Write
\[
\rho = \op{rot}_\Sigma(\gamma) = \op{rot}_\tau(\gamma) - \frac{p}{q},
\]
where $\tau$ is as in \S\ref{sec:rnc}. By \eqref{eqn:nicelambda}, the restriction of $\lambda_\Sigma$ to $N(B)$ is given in the parametrization \eqref{eqn:parametrization} by
\[
{\lambda_\Sigma}|_{N(B)} = \left(1-\frac{\pi \rho r^2}{T}\right)d\widetilde{t}.
\]
Thus by \eqref{eqn:lambdadeltaS}, in $[0,s_0]\times N(B)$, using the parametrization \eqref{eqn:parametrization} of $N(B)$, we have
\begin{equation}
\label{eqn:snb}
{\lambda_\delta}\big|_{[0,s_0]\times N(B)} = e^{\bar{\delta}\zeta(s)\beta(r)}\left(ds + \left(1-\frac{\pi \rho r^2}{T}\right)d\widetilde{t}\right).
\end{equation}

A computation using equation \eqref{eqn:snb} shows that the Reeb vector field $R_{\lambda_\delta}$ on $[0,s_0]\times N(B)$ is given in the coordinates $(s,\tilde{t},r)$ by
\begin{equation}
\label{eqn:ReebNB}
\begin{split}
\frac{2\pi\rho r}{T}e^{\bar{\delta}\zeta(s)\beta(r)} {R_{\lambda_\delta}}\big|_{[0,s_0]\times N(B)} =&\;
\bar{\delta}\zeta'(s)\beta(r)\left(1-\frac{\pi\rho r^2}{T}\right)\partial_r\\
&+\bar{\delta}\zeta(s)\beta'(r)\partial_{\widetilde{t}}
\\
&+\left(\frac{2\pi\rho r}{T} - \bar{\delta}\zeta(s)\beta'(r)\left(1-\frac{\pi\rho r^2}{T}\right)\right)\partial_s.
\end{split}
\end{equation}

To compute the Reeb vector field $R_{\lambda_\delta}$ on the rest of $\mathcal{S}$, observe from \eqref{eqn:lambdadeltaS} that on $[0,s_0]\times\Sigma_0\subset\mathcal{S}$, we have
\begin{equation}
\label{eqn:lambdadeltasigma0}
{\lambda_\delta}|_{[0,s_0]\times\Sigma_0} = e^{\bar{\delta} \zeta(s)}\left(ds + \lambda_\Sigma\right).
\end{equation}
Since $d\lambda_\Sigma$ is a symplectic form on $\Sigma_0$, there is a unique (Liouville) vector field $X$ on $\Sigma_0$ such that 
\begin{equation}
\label{eqn:uniqueX}
\imath_Xd\lambda_\Sigma = \lambda_\Sigma.
\end{equation}
It then follows from \eqref{eqn:lambdadeltasigma0} and \eqref{eqn:uniqueX} that the Reeb vector field associated to $\lambda_\delta$ on $[0,s_0]\times\Sigma_0$ is given by
\begin{equation}
\label{eqn:ReebSigma0}
{R_{\lambda_\delta}}|_{[0,s_0]\times\Sigma_0} = e^{-\bar{\delta}\zeta(s)}\left(\partial_s - \bar{\delta}\zeta'(s)X\right).
\end{equation}

{\em Step 3.\/} We now read off from \eqref{eqn:ReebNB} and \eqref{eqn:ReebSigma0} some useful information about the Reeb flow on $\mathcal{S}$.

To start, observe if $\delta$ is sufficiently small, then the coefficient of $\partial_s$ is everywhere positive in \eqref{eqn:ReebNB} and \eqref{eqn:ReebSigma0}. In particular, $R_{\lambda_\delta}$ has no periodic orbits contained in $\mathcal{S}$. Assume henceforth that $\delta$ is sufficiently small in this sense.

It follows from \eqref{eqn:ReebNB} that in $[0,s_0]$ cross the interior of a neighborhood of $\partial\Sigma$, namely where $r\in(0,r_0/3)$, the coefficient of $\partial_r$ is zero. We conclude that a trajectory of the Reeb vector field $R_{\lambda_\delta}$ starting on $\{0\}\times\op{int}(\Sigma)$ will stay in $\mathcal{S}$ until it reaches $\{s_0\}\times\op{int}(\Sigma)$. We then have a well-defined diffeomorphism
\[
f_\delta : \op{int}(\Sigma) \stackrel{\simeq}{\longrightarrow} \op{int}(\Sigma),
\]
where given $z\in\op{int}(\Sigma)$, the trajectory of $R_{\lambda_\delta}$ starting at $(0,z)$ will flow in $\mathcal{S}$ to $(s_0,f_\delta(z))$.

Next, let $\Sigma_1$ denote the union of $\Sigma_0$ with the union over $B$ of the set of points in $N(B)$ for which $r\in[\frac{2}{3}r_0,r_0)$, so that $\beta(r)=1$. If $\delta$ is sufficiently small, so that the absolute value of the coefficient of $\partial_r$ in \eqref{eqn:ReebNB} is sufficiently small in $[0,s_0]\times(\Sigma_1\setminus\Sigma_0)$, then no trajectory of $R_{\lambda_\delta}$ in $\mathcal{S}$ can intersect both $[0,s_0]\times(\Sigma\setminus\Sigma_1)$ and $[0,s_0]\times\Sigma_0$. Assume henceforth that $\delta$ is sufficiently small in this sense.

By the previous paragraph, if $s\in[0,s_0/2]$ and $z\in\Sigma_0$, then the flow of $R_{\lambda_\delta}$ starting at $(0,z)$ will reach $\{s_0/2\}\times\op{int}(\Sigma)$ at a point in $\{s_0/2\}\times\Sigma_1$. By \eqref{eqn:zetasymmetry}, on $[0,s_0]\times\Sigma_1$, if we replace $s\leftrightarrow s_0-s$, then the coefficient of $\partial_s$ in equations \eqref{eqn:ReebNB} and \eqref{eqn:ReebSigma0} is unchanged, while the rest of the Reeb vector field is multiplied by $-1$. Hence the flow of $-R_{\lambda_\delta}$ starting at $(s_0,z)$ will reach the same point in $\{s_0/2\}\times\Sigma_1$. We conclude that
\begin{equation}
\label{eqn:fdeltasigma1}
{f_\delta}\big|_{\Sigma_0} = \op{id}_{\Sigma_0}.
\end{equation}

{\em Step 4.\/}
We now define the injection $\imath_\delta$ and establish property (b).

If $\gamma\in\mathcal{P}(\lambda)$ and $\gamma$ does not intersect $\op{int}(\Sigma)$, then $\gamma$ is also a periodic orbit of $R_{\lambda_\delta}$ with the same action, which is nondegenerate by our hypothesis that $\lambda$ is nondegenerate, and we define $\imath_\delta(\gamma)=\gamma$.

If $\gamma\in\mathcal{P}(\lambda)$ and $\gamma$ does intersect $\op{int}(\Sigma)$, then $\gamma\cap\op{int}(\Sigma)\subset\Sigma_0$. This is because if $\gamma'$ is a simple Reeb orbit in $u(\partial\Sigma)$, then equation \eqref{eqn:modelReeb} implies that the flow of $R_\lambda$ in $N(\gamma')$ stays in $N(\gamma')$, and there are no simple periodic orbits there other than $\gamma'$, since $\gamma'$ is nondegenerate by hypothesis. Since $\gamma\cap\Sigma\subset\Sigma_0$, it follows from \eqref{eqn:fdeltasigma1} that there is a unique Reeb orbit in $\mathcal{P}(\lambda_\delta)$ which agrees with $\gamma$ in $Y\setminus\mathcal{S}$, and we define $\imath_\delta(\gamma)$ to be this orbit. Equation \eqref{eqn:fdeltasigma1} also implies that $\gamma$ and $\imath_\delta(\gamma)$ have the same linearized return map, so since $\gamma$ is nondegenerate by hypothesis, $\imath_\delta(\gamma)$ is also nondegenerate.

We have defined the injection $\imath_\delta$, and property (b)(i) holds by construction. Property (b)(ii) holds because by Step 3, for $z\in\Sigma_0$, the trajectory of $R_{\lambda_\delta}$ in $\mathcal{S}$ from $(0,z)$ to $(s_0,z)$ stays within $[0,s_0]\times\Sigma_1$. By equations \eqref{eqn:uniquedelta}, \eqref{eqn:ReebNB}, and \eqref{eqn:lambdadeltasigma0}, the flow time of this trajectory is $s_0+\delta$.

{\em Step 5.\/} We now prove property (c)(i).

Let $\gamma'\in\mathcal{P}(\lambda_\delta) \setminus \imath_\delta(\mathcal{P}(\lambda))$.
Write $k=\gamma'\cdot\Sigma$. Then by Step 3, there are $k$ disjoint subtrajectories of $\gamma'$ in $\mathcal{S}$, each of which starts on $\{0\}\times\Sigma$ and ends on $\{s_0\}\times\Sigma$. Order these subtrajectories arbitrarily and denote them by $\gamma'_1,\ldots,\gamma'_k$.

Since $\gamma'\notin \imath_\delta(\mathcal{P}(\lambda))$, it follows that for each $i\in\{1,\ldots,k\}$, there is a component $B$ of $\partial\Sigma$ such that the subtrajectory $\gamma'_i$ starts in $\{0\}\times N(B)$. We claim that the subtrajectory must end in $\{s_0\}\times N(B)$. This is because by the choice of $\delta$ in Step 3, the subtrajectory is contained in $[0,s_0]\times (\Sigma\setminus \Sigma_0)$ or in $[0,s_0]\times\Sigma_1$. In the first case the claim follows immediately, while in the second case the claim follows from $s\leftrightarrow s_0-s$ symmetry as in Step 3. We conclude that there is a simple Reeb orbit $\gamma$ in $u(\partial\Sigma)$ such that all of the subtrajectories $\gamma_i'$ start and end in $N(\gamma)$. By \eqref{eqn:modelReeb}, the rest of the Reeb orbit $\gamma'$ is contained within $N(\gamma)\setminus\mathcal{S}$.

Let $p$ and $q$ be the integers associated to $\gamma$ in Definition~\ref{def:verynice}. Let $m$ denote the number of components of $\partial\Sigma$ mapping to $\gamma$. By equations \eqref{eqn:modelReeb} and \eqref{eqn:nicesigma} and Definition~\ref{def:gammadotsigma}, the Reeb flow of $\lambda$ takes time $\mathcal{A}_\lambda(\gamma)/(\gamma\cdot\Sigma)$ to flow within $N(\gamma)$ from $\Sigma$ to itself.
Thus each subtrajectory of $\gamma'$ in between subtrajectories $\gamma_i'$ has flow time equal to $\mathcal{A}_\lambda(\gamma)/(\gamma\cdot\Sigma) - s_0$.
Since the number of such subtrajectories equals $\gamma'\cdot\Sigma$, the total flow time of $\gamma'$ satisfies
\[
\mathcal{A}_{\lambda_\delta}(\gamma') > (\gamma'\cdot\Sigma)\left(\frac{\mathcal{A}_\lambda(\gamma)}{\gamma\cdot\Sigma} - s_0\right).
\]
Together with our assumption \eqref{eqn:s0small}, this implies \eqref{eqn:mqrho}. This proves property (c)(i).

{\em Step 6.\/}
We now prove property (c)(ii).

Using the notation $\gamma_i'$ from Step 5, by equation \eqref{eqn:lambdadelta} we have
\[
\int_{\gamma'}\frac{d}{d\delta}\lambda_\delta = \frac{d\bar{\delta}}{d\delta}\sum_{i=1}^k\int_{\gamma'_i}\zeta(s)\beta_\Sigma(z)\lambda_\delta.
\]
By changing variables, we have
\[
\int_{\gamma_i'}\zeta(s)\beta_\Sigma(z)\lambda_\delta = \int_{\gamma_i'}\zeta(s)\beta_\Sigma(z)(R_{\lambda_\delta}s)^{-1}ds,
\]
where $R_{\lambda_\delta}s$ is the coefficient of $\partial_s$ in \eqref{eqn:ReebNB} or \eqref{eqn:ReebSigma0} as appropriate. It follows from \eqref{eqn:ReebSigma0} that on $[0,s_0]\times\Sigma_0$ we have
\[
R_{\lambda_\delta}s = e^{-\bar{\delta}\zeta(s)}.
\]
It follows from \eqref{eqn:ReebNB} that if $\delta$ is sufficiently small, then on $[0,s_0]\times(\Sigma\setminus\Sigma_0)$ we have
\[
R_{\lambda_\delta}s \ge (1+\epsilon_0)^{-1}e^{-\bar{\delta}\zeta(s)\beta_\Sigma(z)}.
\]
Assume that $\delta$ is sufficiently small in this sense.
Combining the above four lines and using the fact that $\beta_\Sigma(z)\le 1$ gives
\[
\int_{\gamma'}\frac{d}{d\delta}\lambda_\delta \le (1+\epsilon_0) (\gamma'\cdot\Sigma)\frac{d\bar{\delta}}{d\delta}\int_0^{s_0}e^{\bar{\delta}\zeta(s)}\zeta(s)ds.
\]
On the other hand, it follows from differentiating \eqref{eqn:uniquedelta} that
\[
\frac{d\bar{\delta}}{d\delta}\int_0^{s_0}e^{\bar{\delta}\zeta(s)}\zeta(s)ds=1.
\]
Combining the above two lines proves property (c)(ii).

{\em Step 7.\/}
We have shown that the one-parameter family of contact forms $\{\lambda_\delta\}_{\delta\ge0}$ satisfies properties (a)--(c) for $\delta$ sufficiently small. We now show that the family $\{\lambda_\delta\}$ can be modified to also satisfy (d) for $\delta$ sufficiently small in the same sense.

By property (b) and Step 3, any degenerate Reeb orbit of $\lambda_\delta$ must intersect one of the neighborhoods $N(\gamma)$ where $\gamma$ is a simple Reeb orbit in $u(\partial\Sigma)$.  By a standard transversality argument, we can perform a $C^\infty$-small perturbation of the family $\{\lambda_\delta\}$, replacing $\lambda_\delta$ by $e^{h_\delta}\lambda_\delta$ where $h_\delta$ is supported in the union of the above neighborhoods $N(\gamma)$ and monotone increasing in $\delta$, to arrange that $\lambda_\delta$ is nondegenerate for almost every $\delta$. The monotonicity of $h_\delta$ and equation \eqref{eqn:capacitybound} imply that for each positive integer $k$, the spectral invariant $c_k(Y,\lambda_\delta)$ is a monotone increasing function of $\delta$ (see Remark~\ref{rem:altspecmonotone}), and hence differentiable almost everywhere. Thus property (d) holds. We claim that if the perturbation $\{h_\delta\}$ is sufficiently $C^1$-small, then properties (a)--(c) still hold.

Property (a) still holds by construction. Property (b) still holds because if $\gamma\in\mathcal{P}(\lambda)$, then $\imath_\delta(\gamma)$ does not intersect the support of the perturbation.  Property (c) still holds if $h_\delta$ is sufficiently $C^1$-small, by $C^1$-continuity of the estimates used to prove it.
\end{proof}

\begin{remark}
\label{rem:deltass}
The criterion for $\delta$ to be ``sufficiently small'' in Lemma~\ref{lem:slab} is of the form $\delta=O(r_0^2\epsilon_0^2)$. The constant in this bound depends continuously on the actions $\mathcal{A}(\gamma)$ and rotation numbers $\op{rot}_\Sigma(\gamma)$ for the simple Reeb orbits $\gamma\subset u(\partial\Sigma)$.
\end{remark}


\subsection{Behavior of the spectral invariants under inflation}

We now bound the growth of the elementary spectral invariants $c_k$ under a deformation as in Lemma~\ref{lem:slab}. In summary, an upper bound on the frequency of intersections of Reeb orbits with $\Sigma$, namely hypothesis \eqref{eqn:FL} below, leads to an upper bound on the ratio $c_k(Y,\lambda_\delta)/c_k(Y,\lambda)$, namely inequality \eqref{eqn:iti} below.

\begin{lemma}
\label{lem:slabderivative}
Let be $u:\Sigma\to(Y,\lambda)$ an admissible symplectic surface. Assume that $(Y,\lambda,u)$ is very nice and $\lambda$ is nondegenerate. Let
\begin{equation}
\label{eqn:Llb}
L>\max\left\{\mathcal{A}(\gamma)\;\big|\; \gamma\in\mathcal{P}(\lambda),\;\gamma\subset u(\partial\Sigma)\right\}.
\end{equation}
Let
\begin{equation}
\label{eqn:FL}
F \ge \sup\left\{\frac{\gamma\cdot\Sigma}{\mathcal{A}(\gamma)}\;\bigg|\;\gamma\in\mathcal{P}(\lambda),\;\mathcal{A}(\gamma) \le L\right\}.
\end{equation}
Let $\epsilon>0$ be given.
Fix $\epsilon_0>0$ such that
\begin{equation}
\label{eqn:epsilonprime}
(1+\epsilon_0)(F+\epsilon_0) < F+\epsilon.
\end{equation}
Let $\{\lambda_\delta\}_{\delta\ge0}$ be a family of contact forms provided by Lemma~\ref{lem:slab} for some $r_0,s_0>0$, and assume that $\delta>0$ is sufficiently small as in Lemma~\ref{lem:slab}. If $k$ is a positive integer, and if
\begin{equation}
\label{eqn:Lhyp}
c_k(Y,\lambda_{\delta}) \le (1+\epsilon)^{-1}L,
\end{equation}
then
\begin{equation}
\label{eqn:iti}
c_k(Y,\lambda_\delta) < e^{\delta (F+\epsilon)}c_k(Y,\lambda).
\end{equation}
\end{lemma}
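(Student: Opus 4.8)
The strategy is to apply the refined spectrality result of Lemma~\ref{lem:derivative} to the family $\{\lambda_\delta\}$ at an arbitrary point $\delta_*$ where $c_k(Y,\lambda_{\delta_*})$ is differentiable, and then integrate the resulting derivative bound. First I would fix $k$ satisfying \eqref{eqn:Lhyp} and work on the interval of $\delta$ for which the hypotheses of Lemma~\ref{lem:slab} hold. For any $\delta_*$ in this interval at which $\delta\mapsto c_k(Y,\lambda_\delta)$ is differentiable (which is almost every $\delta_*$ by property (d) of Lemma~\ref{lem:slab}), apply Lemma~\ref{lem:derivative} to the shifted family $(\lambda_{\delta_*+\tau})_\tau$: hypothesis (i) there holds because $\lambda_{\delta_*}$ is nondegenerate by property (d), and hypothesis (ii) is the differentiability. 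This produces an orbit set $\alpha=\{(\alpha_i,m_i)\}$ for $\lambda_{\delta_*}$ with $c_k(Y,\lambda_{\delta_*})=\int_\alpha\lambda_{\delta_*}=\sum_i m_i\mathcal{A}_{\lambda_{\delta_*}}(\alpha_i)$ and
\[
\frac{d}{d\delta}\bigg|_{\delta=\delta_*}c_k(Y,\lambda_\delta)=\int_\alpha\frac{d}{d\delta}\bigg|_{\delta=\delta_*}\lambda_\delta=\sum_i m_i\int_{\alpha_i}\frac{d}{d\delta}\bigg|_{\delta=\delta_*}\lambda_\delta.
\]

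Next I would bound each term $\int_{\alpha_i}\frac{d}{d\delta}\lambda_\delta$ by $(F+\epsilon)\mathcal{A}_{\lambda_{\delta_*}}(\alpha_i)$, so that summing gives $\frac{d}{d\delta}|_{\delta_*}c_k(Y,\lambda_\delta)\le (F+\epsilon)c_k(Y,\lambda_{\delta_*})$. The orbits $\alpha_i$ split into two types. If $\alpha_i=\imath_{\delta_*}(\gamma)$ for some $\gamma\in\mathcal{P}(\lambda)$, then by property (b) of Lemma~\ref{lem:slab} we have $\frac{d}{d\delta}\mathcal{A}_{\lambda_\delta}(\imath_\delta(\gamma))=\gamma\cdot\Sigma=\imath_{\delta_*}(\gamma)\cdot\Sigma$; since $\mathcal{A}_{\lambda_{\delta_*}}(\alpha_i)\le c_k(Y,\lambda_{\delta_*})\le(1+\epsilon)^{-1}L<L$ and then (using $\delta_*$ small) $\mathcal{A}_\lambda(\gamma)\le L$, hypothesis \eqref{eqn:FL} gives $\gamma\cdot\Sigma\le F\,\mathcal{A}_\lambda(\gamma)\le F\,\mathcal{A}_{\lambda_{\delta_*}}(\alpha_i)$, which is even better than needed. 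If $\alpha_i\in\mathcal{P}(\lambda_{\delta_*})\setminus\imath_{\delta_*}(\mathcal{P}(\lambda))$, then property (c)(ii) of Lemma~\ref{lem:slab} gives directly $\int_{\alpha_i}\frac{d}{d\delta}\lambda_\delta<(1+\epsilon_0)(\alpha_i\cdot\Sigma)$, and property (c)(i) together with \eqref{eqn:FL} (applied to the boundary orbit $\gamma$, whose action is $<L$ by \eqref{eqn:Llb}) gives $\alpha_i\cdot\Sigma/\mathcal{A}_{\lambda_{\delta_*}}(\alpha_i)<F+\epsilon_0$; combining with \eqref{eqn:epsilonprime} yields $\int_{\alpha_i}\frac{d}{d\delta}\lambda_\delta<(1+\epsilon_0)(F+\epsilon_0)\mathcal{A}_{\lambda_{\delta_*}}(\alpha_i)<(F+\epsilon)\mathcal{A}_{\lambda_{\delta_*}}(\alpha_i)$. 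In both cases the bound $(F+\epsilon)\mathcal{A}_{\lambda_{\delta_*}}(\alpha_i)$ holds.

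Finally, since $c_k(Y,\lambda_\delta)$ is monotone increasing (Step 7 of Lemma~\ref{lem:slab}), hence locally Lipschitz, the inequality $\frac{d}{d\delta}c_k(Y,\lambda_\delta)\le(F+\epsilon)c_k(Y,\lambda_\delta)$ holds for almost every $\delta$, and Gr\"onwall's inequality (or just integrating $\frac{d}{d\delta}\log c_k$) gives $c_k(Y,\lambda_\delta)\le e^{\delta(F+\epsilon)}c_k(Y,\lambda)$ for all small $\delta$. To get the strict inequality \eqref{eqn:iti}, note that for one of the boundary orbits $\gamma\subset u(\partial\Sigma)$ the niceness condition forces $\op{rot}_\Sigma(\gamma)\neq 0$, so the deformation genuinely increases volume; more simply, the bound in the second case is strict, and if $\alpha$ ever consists only of persisting orbits $\imath_{\delta_*}(\gamma)$ the improved bound $\frac{d}{d\delta}c_k\le F\,c_k<(F+\epsilon)c_k$ is already strict, so in either case one can replace $F+\epsilon$ by a slightly smaller constant over the whole interval. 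The main obstacle I anticipate is the careful bookkeeping in the second bullet of the middle paragraph: one must be sure that the action bound $\mathcal{A}_{\lambda_{\delta_*}}(\alpha_i)\le c_k(Y,\lambda_{\delta_*})\le(1+\epsilon)^{-1}L$ propagates correctly back to a bound on $\mathcal{A}_\lambda(\gamma)$ allowing the use of \eqref{eqn:FL}, and that the "sufficiently small $\delta$" conditions from Lemma~\ref{lem:slab} and Remark~\ref{rem:deltass} are compatible with this chain of estimates; the analytic part (Gr\"onwall) is routine.
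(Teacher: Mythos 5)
Your proposal is correct and follows essentially the same route as the paper's proof: apply Lemma~\ref{lem:derivative} at almost every $\delta$ where $\lambda_\delta$ is nondegenerate and $c_k$ is differentiable, use Lemma~\ref{lem:slab}(b),(c) together with \eqref{eqn:Llb}, \eqref{eqn:FL}, \eqref{eqn:epsilonprime} (and monotonicity of $\delta\mapsto c_k(Y,\lambda_\delta)$ to propagate \eqref{eqn:Lhyp} to smaller $\delta$) to get the a.e.\ bound $\frac{d}{d\delta}c_k(Y,\lambda_\delta)\le(1+\epsilon_0)(F+\epsilon_0)\,c_k(Y,\lambda_\delta)$, and integrate, with strictness coming from $(1+\epsilon_0)(F+\epsilon_0)<F+\epsilon$ and $\delta>0$ (your rotation-number aside is unneeded). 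Two minor quibbles, on points the paper also glosses: monotonicity by itself does not give local Lipschitz continuity (the absolute continuity needed to integrate follows rather from Conformality plus Monotonicity of $c_k$, which bound $c_k(Y,e^{f_2}\lambda)\le e^{\max(f_2-f_1)}c_k(Y,e^{f_1}\lambda)$), and this does not affect the validity of the argument.
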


\begin{proof}
Fix a positive integer $k$. Suppose that the contact form $\lambda_\delta$ is nondegenerate and that the function $\delta\mapsto c_k(Y,\lambda_\delta)$ is differentiable at $\delta$. By Lemma~\ref{lem:derivative}, there exists an orbit set $\alpha=\{(\alpha_i',m_i)\}$ for $\lambda_\delta$ such that
\begin{align}
\label{eqn:F1}
c_k(Y,\lambda_\delta) &= \sum_im_i\mathcal{A}_{\lambda_\delta}(\alpha_i'),\\
\label{eqn:F2}
\frac{d}{d\delta}c_k(Y,\lambda_\delta) &= \sum_im_i\int_{\alpha_i'}\frac{d}{d\delta}\lambda_\delta.
\end{align}
By Lemma~\ref{lem:slab}(b),(c), for each $i$ we have
\begin{equation}
\label{eqn:F3}
\int_{\alpha_i'}\frac{d}{d\delta}\lambda_\delta
\le
(1+\epsilon_0)
(\alpha_i'\cdot\Sigma).
\end{equation}
By Lemma~\ref{lem:slab}(b),(c), for each $i$ there is a simple Reeb orbit $\alpha_i\in\mathcal{P}(\gamma)$
such that
\begin{equation}
\label{eqn:ffai}
\frac{\alpha_i'\cdot\Sigma}{\mathcal{A}_{\lambda_\delta}(\alpha_i')} < \frac{\alpha_i\cdot\Sigma}{\mathcal{A}_{\lambda}(\alpha_i)} + \epsilon_0.
\end{equation}
If $\alpha_i$ is not in $u(\partial\Sigma)$, i.e.\ if we are in the situation of Lemma~\ref{lem:slab}(b), then $\mathcal{A}(\alpha_i)\le \mathcal{A}(\alpha_i')$, so by \eqref{eqn:Lhyp} and \eqref{eqn:F1},  we have $\mathcal{A}_\lambda(\alpha_i) \le L$. By this and the hypothesis \eqref{eqn:Llb}, for every $i$ we have $\mathcal{A}_\lambda(\alpha_i)\le L$, and then by the hypothesis \eqref{eqn:FL} we have
\begin{equation}
\label{eqn:fftaffai}
\frac{\alpha_i\cdot \Sigma}{\mathcal{A}_{\lambda}(\alpha_i)} \le F.
\end{equation}
It follows from \eqref{eqn:ffai} and \eqref{eqn:fftaffai} that
\begin{equation}
\label{eqn:F4}
\alpha_i'\cdot\Sigma \le (F+\epsilon_0)\mathcal{A}_{\lambda_\delta}(\alpha_i').
\end{equation}
By \eqref{eqn:F2}, \eqref{eqn:F3}, \eqref{eqn:F4}, \eqref{eqn:epsilonprime}, and \eqref{eqn:F1}, we have
\[
\frac{d}{d\delta}c_k(Y,\lambda_\delta) < (F+\epsilon)c_k(Y,\lambda_\delta).
\]
By Lemma~\ref{lem:slab}(d), this inequality holds for almost all $\delta\ge0$ for which are sufficiently small as in Lemma~\ref{lem:slab}. Integrating this inequality gives the desired inequality \eqref{eqn:iti}.
\end{proof}


\subsection{Proof of Theorem~\ref{thm:main1}}

Suppose $(Y,\lambda,u)$ is very nice as in Definition~\ref{def:verynice}. Let $\epsilon>0$ be given. Define
\begin{equation}
\label{eqn:finalf}
F = \frac{\op{Area}(\Sigma,d\lambda)}{\op{vol}(Y,\lambda)} - 3\epsilon.
\end{equation}
Choose $\epsilon_0>0$ satisfying \eqref{eqn:epsilonprime}. Choose $r_0$, $s_0$, and $\zeta$ as in \S\ref{sec:inflation}. Assume that $r_0$ is sufficiently small that
\begin{equation}
\label{eqn:finalr0}
\frac{\op{Area}(\Sigma_0,d\lambda)}{\op{vol}(Y,\lambda)} \ge \frac{\op{Area}(\Sigma,d\lambda)}{\op{vol}(Y,\lambda)} - \epsilon.
\end{equation}
Write $A_0=\op{Area}(\Sigma_0,d\lambda)$ and $V=\op{vol}(Y,\lambda)$. Note that by \eqref{eqn:finalf} and \eqref{eqn:finalr0}, we have
\begin{equation}
\label{eqn:feav}
F + \epsilon < \frac{A_0}{V}.
\end{equation}
Given $\delta>0$, define $\bar{\delta}$ by equation \eqref{eqn:uniquedelta}, and define a four-dimensional symplectic manifold
\begin{equation}
\label{eqn:Mssd}
M_{\Sigma_0,\delta} = \left\{(\sigma,s,z)\in\R\times[0,s_0]\times\Sigma_0\;\big|\; 0 < \sigma < \bar{\delta}\zeta(s)\right\} 
\end{equation}
with the symplectic form
\[
\omega = d(e^\sigma\lambda).
\]

\begin{lemma}
\label{lem:invokeweyl}
Suppose that $\delta>0$ is sufficiently small that
\begin{equation}
\label{eqn:dssh}
e^{2\delta (F+\epsilon)} < 1 + \frac{2A_0\delta}{V}.
\end{equation}
Then for any positive integer $N$, there exist positive integers $k,l>N$ such that
\begin{equation}
\label{eqn:kln}
\frac{c_k(Y,\lambda)+c_l^{\op{Alt}}(M_{\Sigma_0,\delta})}{c_{k+l}(Y,\lambda)} > e^{\delta (F+\epsilon)}.
\end{equation}
\end{lemma}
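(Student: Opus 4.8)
Here is how I would prove Lemma~\ref{lem:invokeweyl}.

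The plan is to deduce \eqref{eqn:kln} purely from the two Weyl laws already at hand, after first recording that $M_{\Sigma_0,\delta}$ is fat enough. Concretely, I would (1) show $\op{vol}(M_{\Sigma_0,\delta})\ge A_0\delta$, (2) run the symplectization Weyl law of Lemma~\ref{lem:sss} to get a lower bound on $c_l^{\op{Alt}}(M_{\Sigma_0,\delta})$, and (3) combine this with the Weyl law \eqref{eqn:altspecweyl} for $c_k(Y,\lambda)$ after optimizing the ratio $l/k$.

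For step (1): on the slab $\mathcal{S}\cong[0,s_0]\times\op{int}(\Sigma)$ the Reeb vector field of $\lambda$ is $\partial_s$, so $\lambda=ds+\lambda_\Sigma$ with $\lambda_\Sigma$ an $s$-independent one-form satisfying $\iota_{\partial_s}\lambda_\Sigma=0$; hence $\lambda\wedge d\lambda=ds\wedge d\lambda_\Sigma$, where $\int_{\Sigma_0}d\lambda_\Sigma=A_0$. Expanding $d(e^\sigma\lambda)\wedge d(e^\sigma\lambda)=2e^{2\sigma}\,d\sigma\wedge\lambda\wedge d\lambda$ and integrating $\sigma$ over $(0,\bar{\delta}\zeta(s))$ gives $\op{vol}(M_{\Sigma_0,\delta})=\frac{A_0}{2}\int_0^{s_0}(e^{2\bar{\delta}\zeta(s)}-1)\,ds$. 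Since $e^{2x}-1\ge 2(e^x-1)$ for $x\ge0$, this is at least $A_0\bigl(\int_0^{s_0}e^{\bar{\delta}\zeta(s)}\,ds-s_0\bigr)=A_0\delta$ by \eqref{eqn:uniquedelta}. For step (2): strictly, $M_{\Sigma_0,\delta}$ is not of the form $M_f$ for a smooth $f:Y\to[0,\infty)$, because its ``height function'' jumps across $\partial\Sigma_0$. So I would exhaust it from inside by the regions $M_g$ with $g(s,z)=\bar{\delta}\zeta(s)\chi(z)$, where $\chi:\op{int}(\Sigma)\to[0,1]$ is supported in $\op{int}(\Sigma_0)$: each such $g$ extends smoothly by zero to $Y$, there is a symplectic inclusion $M_g\hookrightarrow M_{\Sigma_0,\delta}$, and $\op{vol}(M_g)\to\op{vol}(M_{\Sigma_0,\delta})$ as $\chi\nearrow1$. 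Applying Lemma~\ref{lem:sss} to $g$ together with the Monotonicity property \eqref{eqn:altechmonotonicity} then yields $\liminf_{l\to\infty}c_l^{\op{Alt}}(M_{\Sigma_0,\delta})^2/l\ge 4\op{vol}(M_{\Sigma_0,\delta})\ge 4A_0\delta$.

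For step (3): set $P=2A_0\delta/V$, so that hypothesis \eqref{eqn:dssh} says exactly $e^{\delta(F+\epsilon)}<\sqrt{1+P}$. Along sequences with $l/k\to t>0$, the Weyl law \eqref{eqn:altspecweyl} applied to $c_k$ and $c_{k+l}$, together with the bound from step (2) for $c_l^{\op{Alt}}$, gives
\[
\liminf\ \frac{c_k(Y,\lambda)+c_l^{\op{Alt}}(M_{\Sigma_0,\delta})}{c_{k+l}(Y,\lambda)}\ \ge\ \frac{\sqrt{2V}+\sqrt{4A_0\delta\,t}}{\sqrt{2V(1+t)}}\ =\ \frac{1+\sqrt{Pt}}{\sqrt{1+t}},
\]
and the right-hand side equals $\sqrt{1+P}$ at $t=P$ (in fact $t=P$ is its maximizer). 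So I would fix a positive rational $t_\ast=b/a$ close enough to $P$ that $(1+\sqrt{Pt_\ast})/\sqrt{1+t_\ast}>e^{\delta(F+\epsilon)}$, then a small $\eta>0$ absorbing the Weyl-law errors, and finally take $k=na$, $l=nb$ with $n$ large: this forces $k,l>N$ and makes the displayed ratio exceed $e^{\delta(F+\epsilon)}$, which is \eqref{eqn:kln}.

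Essentially all of this is routine bookkeeping with the two Weyl laws and the elementary inequality $e^{2x}-1\ge 2(e^x-1)$; the one place calling for a little care is the approximation in step (2), which is forced by the fact that $M_{\Sigma_0,\delta}$ lives over the surface-with-boundary $\Sigma_0$ rather than over all of $Y$, so that Lemma~\ref{lem:sss} does not apply to it verbatim.
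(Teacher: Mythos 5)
Your proof is correct and follows essentially the same route as the paper: the volume bound via $e^{2x}-1\ge 2(e^x-1)$ and \eqref{eqn:uniquedelta}, the Weyl law of Lemma~\ref{lem:sss} for $c_l^{\op{Alt}}$, the Weyl law \eqref{eqn:altspecweyl} for $c_k$, and a choice of ratio $l/k$ near $2A_0\delta/V$, with the strict margin coming from \eqref{eqn:dssh}. The only differences are bookkeeping: the paper perturbs $\delta$ to make $2A_0\delta/V$ rational and exploits the strict volume inequality, whereas you perturb the ratio $t$ instead, and your inner-approximation by smooth $M_g$ carefully justifies applying Lemma~\ref{lem:sss} to $M_{\Sigma_0,\delta}$ (the paper cites that lemma directly, its Darboux-decomposition proof applying verbatim to such piecewise smooth regions).
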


\begin{proof}
By increasing $\delta$ slightly if necessary, we can assume without loss of generality that $A_0\delta/V$ is rational.

Write $M=M_{\Sigma_0,\delta}$. By equation \eqref{eqn:volmf}, we have
\[
\begin{split}
\op{vol}(M,\omega) &= \frac{1}{2}\int_{[0,s_0]\times\Sigma_0}\left(e^{2\bar{\delta}\zeta(s)}-1\right)\lambda\wedge d\lambda\\
&= \frac{A_0}{2}\int_0^{s_0}\left(e^{2\bar{\delta}\zeta(s)}-1\right)ds\\
&> A_0\int_{0}^{s_0}\left(e^{\bar{\delta}\zeta(s)}-1\right)\\
& =\delta A_0.
\end{split}
\] 
Here the second to last inequality holds by convexity of the exponential function, and the last equality holds by \eqref{eqn:uniquedelta}.
By the above volume estimate and the Weyl law for the alternative ECH capacities $c_l^{\op{Alt}}$ in Lemma~\ref{lem:sss}, we have
\begin{equation}
\label{eqn:limli}
\liminf_{l\to\infty}\frac{c_l^{\op{Alt}}(M,\omega)^2}{l} > 4A_0\delta.
\end{equation}

Now let $k$ and $l$ be positive integers such that
\begin{equation}
\label{eqn:lk}
\frac{l}{k} = \frac{2A_0\delta}{V}.
\end{equation}
By the Weyl law for the spectral invariants $c_k$ in Proposition~\ref{prop:altspecproperties}, we have
\begin{equation}
\label{eqn:limck}
\lim_{k\to\infty}\frac{c_k(Y,\lambda)^2}{k} = 2 V.
\end{equation}
It follows from equations \eqref{eqn:limli}, \eqref{eqn:lk}, and \eqref{eqn:limck} that
\begin{align}
\label{eqn:limckl}
\lim_{k\to\infty}\frac{c_{k+l}(Y,\lambda)^2}{k} &= 2V\left(1+\frac{2A_0\delta}{V}\right),\\
\label{eqn:clalt}
\liminf_{k\to\infty}\frac{c_l^{\op{Alt}}(M,\omega)^2}{k} &> \frac{8A_0^2\delta^2}{V}.
\end{align}
Combining \eqref{eqn:limck}, \eqref{eqn:limckl}, and \eqref{eqn:clalt} gives
\begin{equation}
\label{eqn:combinelimits}
\liminf_{k\to\infty}\frac{(c_{k}(Y,\lambda) + c_l^{\op{Alt}}(M,\omega))^2}{c_{k+l}(Y,\lambda)^2} > 1 + \frac{2A_0\delta}{V}.
\end{equation}
It follows from \eqref{eqn:dssh} and \eqref{eqn:combinelimits} that if $k$ and $l$ are sufficiently large, then \eqref{eqn:kln} holds.
\end{proof}

\begin{lemma}
\label{lem:nondegeneratemain}
With the above choices, suppose that $\lambda$ is nondegenerate. Suppose that $\delta>0$ is sufficiently small as in Lemmas~\ref{lem:slab} and \ref{lem:invokeweyl}. Let $k,l$ be positive integers provided by Lemma~\ref{lem:invokeweyl}, and assume these are sufficiently large that
\begin{equation}
\label{eqn:newklbig}
c_{k+l}(Y,\lambda) > \max\{\mathcal{A}(\gamma) \mid \gamma\in\mathcal{P}(\lambda), \gamma\subset u(\partial\Sigma)\}.
\end{equation}
Then there exists a simple Reeb orbit $\gamma\in\mathcal{P}(\lambda)$ with
\begin{equation}
\label{eqn:newintersectionbound}
\frac{\gamma\cdot\Sigma}{\mathcal{A}(\gamma)} \ge F
\end{equation}
and
\begin{equation}
\label{eqn:newactionbound}
\mathcal{A}(\gamma) \le e^{\bar{\delta}}(1+\epsilon)c_{k+l}(Y,\lambda).
\end{equation}
\end{lemma}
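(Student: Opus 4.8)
The plan is to contradict the conclusion of Lemma~\ref{lem:slabderivative}, so that one of its hypotheses must fail, having arranged the choices so that the only hypothesis that can fail is \eqref{eqn:FL}, which then produces the desired Reeb orbit. Concretely, set
\[
L = e^{\bar{\delta}}(1+\epsilon)\,c_{k+l}(Y,\lambda),
\]
and apply Lemma~\ref{lem:slabderivative} with the integer ``$k$'' there taken to be $k+l$. With this choice of $L$, hypothesis \eqref{eqn:Lhyp} reads $c_{k+l}(Y,\lambda_\delta)\le e^{\bar{\delta}}c_{k+l}(Y,\lambda)$, and the action bound $\mathcal{A}(\gamma)\le L$ we want to produce is exactly \eqref{eqn:newactionbound}. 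By \eqref{eqn:newklbig} we have $L > c_{k+l}(Y,\lambda) > \max\{\mathcal{A}(\gamma)\mid \gamma\in\mathcal{P}(\lambda),\ \gamma\subset u(\partial\Sigma)\}$, so hypothesis \eqref{eqn:Llb} holds, and $\epsilon_0$ was chosen at the outset to satisfy \eqref{eqn:epsilonprime}.

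The heart of the argument is the lower bound
\[
c_{k+l}(Y,\lambda_\delta)\ \ge\ c_k(Y,\lambda) + c_l^{\op{Alt}}(M_{\Sigma_0,\delta})\ >\ e^{\delta(F+\epsilon)}\,c_{k+l}(Y,\lambda),
\]
whose second inequality is \eqref{eqn:kln} from Lemma~\ref{lem:invokeweyl}. For the first inequality, recall from Lemma~\ref{lem:slab}(a) that $\lambda_\delta=e^{f_\delta}\lambda$ with $f_\delta\ge 0$ and $f_\delta(\Phi(s,u(z)))=\bar{\delta}\zeta(s)$ for $s\in[0,s_0]$ and $z\in\Sigma_0$. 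The observation to make is that $M_{\Sigma_0,\delta}$ of \eqref{eqn:Mssd} is a symplectic subdomain of the slab $M_{f_\delta}$ of \eqref{eqn:mf}: the Reeb-flow map $[0,s_0]\times\op{int}(\Sigma)\to Y$ is an embedding by the choice of $s_0$, and under it $M_{\Sigma_0,\delta}$ is precisely the part of the symplectization lying over $\Phi([0,s_0]\times\Sigma_0)$ and below the graph of $f_\delta$. Hence the Capacity Bound \eqref{eqn:capacitybound}, applied with $f_1\equiv 0$ and $f_2=f_\delta$, together with the monotonicity \eqref{eqn:altechmonotonicity} of $c_l^{\op{Alt}}$, gives $c_{k+l}(Y,\lambda_\delta)\ge c_k(Y,\lambda)+c_l^{\op{Alt}}(M_{f_\delta})\ge c_k(Y,\lambda)+c_l^{\op{Alt}}(M_{\Sigma_0,\delta})$.

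It remains to verify hypothesis \eqref{eqn:Lhyp}, i.e.\ $c_{k+l}(Y,\lambda_\delta)\le e^{\bar{\delta}}c_{k+l}(Y,\lambda)$. This follows from the Conformality and Monotonicity properties of the elementary spectral invariants (Proposition~\ref{prop:altspecproperties} and Remark~\ref{rem:altspecmonotone}), which give $c_{k+l}(Y,\lambda_\delta)\le c_{k+l}(Y,e^{\bar{\delta}}\lambda)=e^{\bar{\delta}}c_{k+l}(Y,\lambda)$, once one knows that $0\le f_\delta\le\bar{\delta}$ — which is arranged by choosing $\zeta$ (normalized so that $\sup\zeta<1$) and the perturbation from Step~7 of the proof of Lemma~\ref{lem:slab} appropriately. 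Now all hypotheses of Lemma~\ref{lem:slabderivative} (with ``$k$'' replaced by $k+l$) are in force except possibly \eqref{eqn:FL}; but its conclusion \eqref{eqn:iti}, which here reads $c_{k+l}(Y,\lambda_\delta)<e^{\delta(F+\epsilon)}c_{k+l}(Y,\lambda)$, is contradicted by the lower bound of the preceding paragraph. Therefore \eqref{eqn:FL} must fail, so there is a simple Reeb orbit $\gamma\in\mathcal{P}(\lambda)$ with $\mathcal{A}(\gamma)\le L$ and $\frac{\gamma\cdot\Sigma}{\mathcal{A}(\gamma)}>F$; this $\gamma$ satisfies \eqref{eqn:newintersectionbound} and \eqref{eqn:newactionbound}.

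I do not anticipate a genuine obstacle: granting Lemmas~\ref{lem:slab}, \ref{lem:invokeweyl}, and \ref{lem:slabderivative}, the proof is a short synthesis. The one step calling for real care is the identification of $M_{\Sigma_0,\delta}$ as a symplectic subdomain of the slab $M_{f_\delta}$ — this is what allows the Weyl-law estimate for $c_l^{\op{Alt}}(M_{\Sigma_0,\delta})$ from Lemma~\ref{lem:invokeweyl} to be converted, through the Capacity Bound, into a lower bound for $c_{k+l}(Y,\lambda_\delta)$ that overshoots the upper bound \eqref{eqn:iti}. Everything else is bookkeeping: the constants must be fixed in a compatible order — first $\epsilon$, then $F$ and $\epsilon_0$, then $r_0$, $s_0$, $\zeta$, then $\delta$ small enough that \eqref{eqn:dssh} holds (this is where \eqref{eqn:feav} enters), then $(k,l)$ from Lemma~\ref{lem:invokeweyl} — with $L$ defined last in terms of $c_{k+l}(Y,\lambda)$.
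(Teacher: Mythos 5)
Your proof is correct and is essentially the same argument as the paper's: both combine the Capacity Bound and Monotonicity properties with \eqref{eqn:kln} to contradict the conclusion \eqref{eqn:iti} of Lemma~\ref{lem:slabderivative}, so that \eqref{eqn:FL} must fail and the desired orbit appears. The only cosmetic difference is that you set $L=e^{\bar{\delta}}(1+\epsilon)c_{k+l}(Y,\lambda)$ at the outset (so you need $c_{k+l}(Y,\lambda_\delta)\le e^{\bar{\delta}}c_{k+l}(Y,\lambda)$ to verify \eqref{eqn:Lhyp}), whereas the paper takes $L=(1+\epsilon)c_{k+l}(Y,\lambda_\delta)$ and invokes the same inequality $\lambda_\delta/\lambda\le e^{\bar{\delta}}$ only at the end to convert $\mathcal{A}(\gamma)\le L$ into \eqref{eqn:newactionbound}.
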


\begin{proof}
By the Capacity Bound property \eqref{eqn:capacitybound} in Proposition~\ref{prop:altspecproperties} and the Monotonicity property \eqref{eqn:altechmonotonicity} in Lemma~\ref{lem:altechproperties}, we have
\[
c_{k+l}(Y,\lambda_\delta) \ge c_k(Y,\lambda) + c_l^{\op{Alt}}(M_{\Sigma_0,\delta}).
\]
Combining this with \eqref{eqn:kln} gives
\begin{equation}
\label{eqn:growfast}
\frac{c_{k+l}(Y,\lambda_\delta)}{c_{k+l}(Y,\lambda)} > e^{\delta (F+\epsilon)}.
\end{equation}
Now set
\begin{equation}
\label{eqn:newsetL}
L = (1+\epsilon)c_{k+l}(Y,\lambda_\delta).
\end{equation}
By Lemma~\ref{lem:slab}(a) and the monotonicity of $c_{k+l}$ in Remark~\ref{rem:altspecmonotone}, we have
\begin{equation}
\label{eqn:newcklm}
c_{k+l}(Y,\lambda_\delta) \ge c_{k+l}(Y,\lambda).
\end{equation}
It follows from \eqref{eqn:newsetL}, \eqref{eqn:newcklm}, and \eqref{eqn:newklbig} that the inequality \eqref{eqn:Llb} holds. And it follows from \eqref{eqn:newsetL} and \eqref{eqn:newcklm} that \eqref{eqn:Lhyp} holds. Then by Lemma~\ref{lem:slabderivative}, there exists a simple Reeb orbit $\gamma\in\mathcal{P}(\lambda)$ with
\begin{equation}
\label{eqn:newAgammaL}
\mathcal{A}(\gamma)\le L
\end{equation}
satisfying \eqref{eqn:newintersectionbound}. Since $\lambda_\delta/\lambda\le e^{\bar{\delta}}$, it follows from the Conformality property in Proposition~\ref{prop:altspecproperties} and the Monotonicity property in Remark~\ref{rem:altspecmonotone} that
\[
c_{k+l}(Y,\lambda_\delta) \le e^{\bar{\delta}}c_{k+l}(Y,\lambda).
\]
Together with \eqref{eqn:newsetL} and \eqref{eqn:newAgammaL}, this proves \eqref{eqn:newactionbound}.
\end{proof}

\begin{proof}[Proof of Theorem~\ref{thm:main1}]
Let $\epsilon>0$ be given. Choose $F,r_0,s_0,\zeta$ as in Lemma~\ref{lem:invokeweyl}.

Suppose that $\lambda$ is nondegenerate. Fix $\delta>0$ sufficiently small as in Lemma~\ref{lem:slab} satisfying \eqref{eqn:dssh}. Let $k,l$ be positive integers provided by Lemma~\ref{lem:invokeweyl}, and assume these are sufficiently large that \eqref{eqn:newklbig} holds. Then by Lemma~\ref{lem:nondegeneratemain}, there exists a simple Reeb orbit $\gamma\in\mathcal{P}(\gamma)$ such that
\begin{equation}
\label{eqn:3epsilon}
\frac{\gamma\cdot\Sigma}{\mathcal{A}(\gamma)} \ge \frac{\op{Area}(\Sigma,d\lambda)}{\op{vol}(Y,\lambda)} - 3\epsilon
\end{equation}
and \eqref{eqn:newactionbound} holds.

Suppose now that $\lambda$ is degenerate. We can find a sequence of nondegenerate contact forms $(\lambda(n))_{n\ge 1}$ such that $\lambda(n)$ converges in $C^\infty$ to $\lambda$, and the triple $(Y,\lambda(n),u)$ is very nice for each $n$. By Remark~\ref{rem:deltass} 
we can choose $\delta>0$ which is sufficiently small as in Lemma~\ref{lem:slab} for all $n$ sufficiently large. By continuity of symplectic area and contact volume, we can also choose this $\delta$ to satisfy \eqref{eqn:dssh} for all $n$ sufficiently large. Let $k,l$ be positive integers provided by Lemma~\ref{lem:invokeweyl} for $\lambda$, so that the inequality \eqref{eqn:kln} holds, and assume that $k$ and $l$ are sufficiently large that \eqref{eqn:newklbig} holds for $\lambda$. By the continuity of alternative ECH capacities and elementary spectral invariants, if $n$ is sufficiently large, then the inequalities \eqref{eqn:kln} and \eqref{eqn:newklbig} also hold with $\lambda$ replaced by $\lambda(n)$. Then by the previous paragraph, for each $n$ sufficiently large we can find a simple Reeb orbit $\gamma(n)\in\mathcal{P}(\lambda(n))$ such that
\begin{equation}
\label{eqn:3epsilonn}
\frac{\gamma(n)\cdot\Sigma}{\mathcal{A}_{\lambda(n)}(\gamma(n))} \ge \frac{\op{Area}(\Sigma,d\lambda(n))}{\op{vol}(Y,\lambda(n))} - 3\epsilon
\end{equation}
and
\begin{equation}
\label{eqn:newactionboundn}
\mathcal{A}_{\lambda(n)}(\gamma(n)) \le e^{\bar{\delta}}(1+\epsilon)c_{k+l}(Y,\lambda(n)).
\end{equation}

It follows from \eqref{eqn:newactionboundn} and the continuity of elementary spectral invariants that there is an $n$-independent upper bound on the action $\mathcal{A}_{\lambda(n)}(\gamma(n))$. Consequently, we can pass to a subsequence so that the Reeb orbits $\gamma(n)$ converge to a Reeb orbit $\gamma$ for $\lambda$ satisfying \eqref{eqn:3epsilon} (and also the action bound \eqref{eqn:newactionbound}). It is possible that $\gamma$ is not a simple Reeb orbit, but in that case it is a multiple cover of a simple Reeb orbit which still satisfies \eqref{eqn:3epsilon}.

Since we have shown that given any $\epsilon>0$ there exists $\gamma\in\mathcal{P}(\gamma)$ satisfying \eqref{eqn:3epsilon}, this completes the proof of Theorem~\ref{thm:main1}.
\end{proof}


\end{document}